\numberwithin{equation}{section}
\newcommand{\beq}{\begin{equation}}
\newcommand{\eeq}{\end{equation}}
\newcommand{\beqs}{\begin{eqnarray*}}
\newcommand{\eeqs}{\end{eqnarray*}}
\newcommand{\beqn}{\begin{eqnarray}}
\newcommand{\eeqn}{\end{eqnarray}}
\newcommand{\beqa}{\begin{array}}
\newcommand{\eeqa}{\end{array}}
\def\lra{\longrightarrow}
\def\bc{\begin{center}}
\def\ec{\end{center}}
\def\begeq{\begin{equation}}
\def\endeq{\end{equation}}
\def\and{\quad{\rm and}\quad}
\let\lra=\longrightarrow
\def\mapright\#1{\,\smash{\mathop{\lra}\limits^{\#1}}\,}
\newtheorem{prop}{Proposition}[section]
\newtheorem{theo}[prop]{Theorem}
\newtheorem{lem}[prop]{Lemma}
\newtheorem{cor}[prop]{Corollary}
\newtheorem{rem}[prop]{Remark}
\newtheorem{ex}[prop]{Example}
\newtheorem{defi}[prop]{Definition}
\begin{document}
\title{Singular limits of K\"ahler-Ricci flow on Fano $G$-manifolds }

\author{Yan $\text{Li}^{*}$, Gang $\text{Tian}^{\dag}$ and Xiaohua $\text{Zhu}^{\ddag}$}

\thanks {$^*$Partially supported by China Post-Doctoral Grant BX20180010 and BIT Grant 3170012222012.}
\thanks {$^{\dag}$Partially supported by NSFC grants 11331001 and 11890661.}
\thanks {$^{\ddag}$Partially supported by NSFC Grants  11771019 and BJSF Grants Z180004.}

\address{$^*$School of Mathematics and Statistics, Beijing Institute of technology, Beijing 100081, China.}
\address{$^{\dag,\ddag}$BICMR and SMS, Peking University, Beijing 100871, China.}
\email{liyanmath@pku.edu.cn,\ \ \ tian@math.pku.edu.cn\\\ xhzhu@math.pku.edu.cn}

\subjclass[2000]{Primary: 53C25; Secondary:
32Q20, 32Q10, 58E11}

\keywords{$G$-manifolds, K\"ahler-Einstein metrics, K\"ahler-Ricci flow, type II solutions}

\begin{abstract} Let  $M$ be a Fano compactification of  semisimple complex Lie group $G$ and $\omega_0$  a $K\times K$-invariant  metric in $2\pi c_1(M)$,   where $K$ is a maximal compact subgroup of $G$. Then we     prove that the  solution of K\"ahler-Ricci flow with $\omega_0$  as an initial metric on $M$,
is of type II,   if $M$ admits no K\"ahler-Einstein metrics.
As an application, we found two Fano compactifications of $\mathrm{SO}_4(\mathbb{C})$ and one Fano compactification of $\mathrm{Sp}_4(\mathbb{C})$, on which the K\"ahler-Ricci flow will develop singularities of type II. To the authors'  knowledge, these are the first examples of Ricci flow with singularities of type II on Fano manifolds in the literature.
\end{abstract}

\maketitle

\section{Introduction}

Ricci flow was introduced by Hamilton in early 1980's and preserves the K\"ahlerian structure \cite {Ha}. The K\"ahler-Ricci flow is simply the Ricci flow restricted to K\"ahler metrics. If $M$ is a Fano manifold, that is, a compact K\"ahler manifold with positive first Chern class $c_1(M)$, we usually consider the following normalized K\"ahler-Ricci flow,
\begin{align}\label{kahler-Ricci-flow}
\frac{\partial \omega(t)}{\partial t}\, =\, -{\rm Ric}(\omega( t))\, +\,\omega(t), ~\omega( 0)=\omega_0,
\end{align}
where $\omega_0$ and $\omega (t)$ denote the K\"ahler forms of a given K\"ahler metric $g_0$ and the solutions of Ricci flow with initial metric $g_0$, respectively. \footnote{For simplicity, we will denote a K\"ahler metric by its K\"ahler form thereafter.}
It is proved in \cite{cao} that (\ref{kahler-Ricci-flow}) has a global solution $\omega(t)$ for all $t\ge 0$ whenever $\omega_0$ represents $2\pi c_1(M)$. A long-standing problem concerns the limiting behavior of $\omega(t)$ as $t\to \infty$. If $M$ admits a K\"ahler-Einstein metric $\omega_{KE}$ with K\"ahler class $2\pi c_1(M)$, then $\omega(t)$ converges to $\omega_{KE}$ (cf. \cite{tianzhu, TZ4}), but in general, $\omega(t)$ may not have a limit on $M$. A conjecture, referred as the Hamilton-Tian conjecture, was stated in \cite{T1} that any sequence of $(M, \omega(t))$ contains a subsequence converging to a length space $(M_\infty,\omega_\infty)$
in the Gromov-Hausdorff topology and $(M_\infty,\omega_\infty)$ is a smooth K\"ahler-Ricci soliton outside a closed subset $S$, called the singular set, of codimension at least $4$. Moreover, this subsequence of $(M, \omega(t))$ converges  locally to the regular part of $(M_\infty,\omega_\infty)$
in the Cheeger-Gromov topology. Recall that a K\"ahler-Ricci soliton on a complex manifold $M$ is a pair $(X, \omega)$, where $X$ is a holomorphic vector field on $M$ and $\omega$ is a K\"ahler metric on $M$, such that
\begin{align}\label{kr-soliton}{\rm Ric}(\omega)\,-\,\omega\,=\,L_X(\omega),
\end{align}
where $L_X$ is the Lie derivative along $X$. If $X=0$, the K\"ahler-Ricci soliton becomes a
K\"ahler-Einstein metric. The uniqueness theorem in \cite{TZ1, TZ5} states that a K\"ahler-Ricci
soliton on a compact complex manifold, if it exists, must be unique modulo ${\rm Aut}(M)$.\footnote {In the case of K\"ahler-Einstein metrics, this uniqueness theorem is due to Bando-Mabuchi \cite{BM}.} Furthermore, $X$ lies in the center of Lie algebra of a reductive part of ${\rm Aut}(M)$.

The Gromov-Hausdorff convergence part in the Hamilton-Tian conjecture follows from Perelman's non-collapsing result and Zhang's upper volume estimate \cite{Pe, Zh1, Zhq}. More recently, there were very significant progresses on this conjecture, first by Tian and Zhang in dimension less than $4$ \cite{TZhzh}, then by Chen-Wang \cite{Chwang} and Bamler \cite{Bam} in higher dimensions. In fact, Bamler proved a generalized version of the conjecture.

A natural problem is how regular the limit space $(M_\infty,\omega_\infty)$ is. Assuming that the Hamilton-Tian conjecture is affirmed, Tian and Zhang proved in \cite{TZhzh} that $M_\infty$ is a $Q$-Fano  variety whose singular set coincides with $S$. They proved this by establishing a parabolic version of the partial $C^0$-estimate. {\it Is this the best regularity we can have}? In fact, there was a folklore speculation that $(M_\infty, \omega_\infty)$ is actually a smooth Ricci soliton, equivalently, $S= \emptyset$. We recall that a solution $\omega(t)$ of {(\ref{kahler-Ricci-flow})} is called type I if the curvature of $\omega(t)$ is uniformly bounded, otherwise, we call $\omega(\cdot, t)$ a solution of type II. By using Perelman's entropy \cite{Pe}, we see that in the case of type I solutions, the limit $(M_\infty,\omega_\infty) $ has to be a K\"ahler-Ricci soliton. Then the above folklore speculation simply means that {(\ref{kahler-Ricci-flow})} has no type II solutions. The second-named author believed that this speculation can not be true and raised the problem of finding a Fano manifold whose K\"ahler-Ricci flow develops type II singularity at $\infty$.

In this paper, we will show that the above folklore speculation does not hold. We will prove

\begin{theo}\label{singular-type2} Let $G$ be a complex semisimple Lie group and
$M$ be a Fano $G$-manifold which admits no K\"ahler-Einstein metrics.  Then any solution of K\"ahler-Ricci flow (\ref{kahler-Ricci-flow}) on $M$ with a  $K\times K$-invariant
 initial metric $\omega_0\in 2\pi c_1(M)$ is of type II,
   where $K$ is a maximal compact subgroup of $G$.
\end{theo}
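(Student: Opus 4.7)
The plan is to argue by contradiction: I shall show that if the solution $\omega(t)$ of (\ref{kahler-Ricci-flow}) were of type I, then $M$ would already carry a K\"ahler-Einstein metric, which is excluded by hypothesis. Suppose then that $|\mathrm{Rm}(\omega(t))|$ is uniformly bounded for all $t\ge 0$. Combining Perelman's non-collapsing and $\mu$-entropy monotonicity with Hamilton's compactness theorem, I extract a sequence $t_i\to\infty$ and diffeomorphisms $\phi_i:M\to M$ such that $\phi_i^*(\omega(t_i),J)$ converges smoothly in the Cheeger-Gromov sense to a limit $(\omega_\infty,J_\infty)$ on $M$; saturation of Perelman's $\mu$-functional forces $(\omega_\infty,X_\infty)$ to be a smooth shrinking K\"ahler-Ricci soliton on $(M,J_\infty)$. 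Since $\omega(t)$ remains $K\times K$-invariant along the flow, after composing $\phi_i$ with suitable elements of $K\times K$ and passing to a subsequence, $J_\infty$ may also be taken $K\times K$-invariant.

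The next step is to identify $(M,J_\infty)$ with the original $G$-compactification. Complexifying the $K\times K$-action on $(M,J_\infty)$ produces a holomorphic action of $G\times G$, and the isotropy of the distinguished base point is exactly $\operatorname{diag}(G)$ (as the complexification of $\operatorname{diag}(K)$, the original stabilizer); hence the open orbit is biholomorphic to $G$ and $(M,J_\infty)$ is again a Fano compactification of $G$. The moment polytope attached to a $K\times K$-invariant K\"ahler class on a smooth $G$-compactification depends only on the underlying $K\times K$-manifold and on the cohomology class, so the polytopes of $\omega(t_i)$ and of $\omega_\infty$ both coincide with that of $\omega_g$. By the classification of polarized Fano $G$-compactifications via their moment polytopes, this yields a $G\times G$-equivariant biholomorphism $\Phi:(M,J)\to(M,J_\infty)$, and I transport $(\omega_\infty,X_\infty)$ back to a K\"ahler-Ricci soliton on the original $(M,J)$.

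To conclude, I invoke the uniqueness theorem of \cite{TZ1}: the soliton field $X_\infty$ lies in the center of the Lie algebra of a reductive part of $\Aut^0(M)$, and hence commutes with the $G\times G$-action. Restricted to the open orbit $G\subset M$, $X_\infty$ is then a holomorphic vector field invariant under both left and right translations by $G$; because $G$ is semisimple, its center is discrete, so every bi-invariant holomorphic vector field on $G$ vanishes, and by continuity $X_\infty\equiv 0$ on $M$. Consequently $\omega_\infty$ is a genuine K\"ahler-Einstein metric in $2\pi c_1(M)$, contradicting the hypothesis and forcing the flow to be of type II.

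The principal obstacle is the identification performed in the second paragraph. Cheeger-Gromov convergence only produces a smooth limiting complex structure $J_\infty$, and a priori $(M,J_\infty)$ could define a different spherical $G\times G$-compactification of $G$ on the same underlying smooth manifold, possibly with a jumped polytope or modified colored fan. To rule this out cleanly, one should work with the reduction of the $K\times K$-invariant K\"ahler-Ricci flow to a parabolic real Monge-Amp\`ere equation for a convex potential on the positive Weyl chamber (in the spirit of Delcroix's formalism and the authors' earlier work) and track the moment polytope -- and more generally the spherical combinatorial data -- directly along the flow, verifying that they do not jump in the long-time limit.
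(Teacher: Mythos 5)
Your overall skeleton coincides with the paper's: assume type I, extract a Cheeger--Gromov limit which is a shrinking K\"ahler--Ricci soliton, show the limit is again a Fano compactification of $G$, use semisimplicity (bi-invariant vector fields on $G$ vanish) to kill the soliton field, identify the limit complex structure with the original one, and contradict the non-existence of a K\"ahler--Einstein metric. However, the two pillars of this argument are exactly what the paper spends Sections 3--4 proving, and you assert them rather than prove them. First, the claim that $J_\infty$ can be made $K\times K$-invariant ``after composing $\phi_i$ with suitable elements of $K\times K$'' and that the complexified action on $(M,J_\infty)$ is a $G\times G$-action whose base-point isotropy is $\operatorname{diag}(G)$ is unsubstantiated: the Cheeger--Gromov diffeomorphisms need not intertwine the group actions, orbits and stabilizers can degenerate in the limit, and establishing that the limit still carries an effective $G\times G$-action with an open dense orbit isomorphic to $G$ is the content of Proposition \ref{vector-limit} and Proposition \ref{g-structure} (convergence and non-triviality of the induced holomorphic vector fields via the potential estimates and \eqref{osc-2}, non-degeneracy on the open orbit via Delcroix's Hessian formula in Lemma \ref{nondegenerate-vector}, and the exclusion of a nontrivial finite stabilizer by the volume comparison argument). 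None of this is routine, and your proposal contains no substitute for it.

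Second, your identification of $(M,J_\infty)$ with $(M,J)$ goes through the assertion that the moment polytope ``depends only on the underlying $K\times K$-manifold and on the cohomology class'' and therefore does not jump, followed by an appeal to a classification of Fano $G$-compactifications by their polytopes. This is a genuinely different route from the paper's Proposition \ref{bi-holo} (which proves directly that a $G\times G$-invariant complex structure on a semisimple $G$-manifold is unique, by decomposing $J'$ at a base point into $\pm J$ on the irreducible ideals and reducing to the toric Lemma \ref{uniqueness of splitting-structure}), but as stated it is a gap: the polytope is attached to the pair (complex structure, linearized action), so the non-jumping claim presupposes precisely the identification of the limiting $G\times G$-action that is in question, and the step ``same anticanonical polytope $\Rightarrow$ equivariantly biholomorphic'' itself requires justification (colors/colored-fan data for non-toroidal compactifications). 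You do flag this obstacle honestly and propose tracking the polytope along the flow via the reduction to a real Monge--Amp\`ere equation on the Weyl chamber, but that is a program, not an argument. As written, the proposal establishes the theorem only modulo the paper's Propositions \ref{g-structure} and \ref{bi-holo}, i.e.\ modulo its main technical content.
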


Here by a $G$-manifold, we mean a {\it (bi-equivariant) compactification of $G$} which admits a holomorphic $G\times G$-action and has an open and dense orbit isomorphic to $G$ as a $G\times G$-homogeneous space. There are examples of $G$-manifolds which admit neither K\"ahler-Einstein metrics nor K\"ahler-Ricci solitons, more precisely, we will show

\begin{theo}\label{SO(4)} There are two $\mathrm{SO}_4(\mathbb{C})$-manifolds and one $\mathrm{Sp}_4(\mathbb{C})$-manifold on which the K\"ahler-Ricci flow (\ref{kahler-Ricci-flow}) develops singularities of type II.
\end{theo}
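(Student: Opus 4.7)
The strategy is to apply Theorem \ref{singular-type2} directly: it suffices to exhibit two smooth Fano bi-equivariant compactifications of $\mathrm{SO}_4(\mathbb{C})$ and one of $\mathrm{Sp}_4(\mathbb{C})$ that carry no K\"ahler-Einstein metric, since the K\"ahler-Ricci flow starting from any $K\times K$-invariant initial metric in $2\pi c_1(M)$ on such a manifold will then automatically be of type II.

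First, I would enumerate the candidate compactifications. By Luna-Vust theory, smooth bi-equivariant compactifications of a semisimple complex Lie group $G$ are classified by colored fans, and the Fano condition cuts out a finite list. For $\mathrm{SO}_4(\mathbb{C})$ (whose root system is $A_1\times A_1$) and $\mathrm{Sp}_4(\mathbb{C})$ (root system $C_2$), both of rank $2$, the classification reduces to a manageable, explicit family of two-dimensional moment polytopes $P_+$ sitting inside the positive Weyl chamber $\mathfrak{a}^*_+$, with vertices determined by the chosen colored fan and the anticanonical polarization.

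Next, I would apply Delcroix's criterion: a Fano $G$-manifold $M$ admits a K\"ahler-Einstein metric if and only if the barycenter of $P_+$ computed with respect to the Duistermaat-Heckman type weighted measure $\pi(y)\,dy$, where
\[
\pi(y)\,=\,\prod_{\alpha\in\Phi^+}\langle\alpha,y\rangle^2,
\]
lies in the shifted open cone $2\rho+\Xi$, with $\Xi$ the relative interior of the cone generated by the positive roots. Candidates whose polytopes are strongly asymmetric with respect to the line through $2\rho$, or stretch far away from $2\rho$ along one of the root directions, are the natural places to search for failure of this condition: pushing the vertices sufficiently in the wrong direction forces the weighted barycenter to escape from $2\rho+\Xi$. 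From the finite rank-$2$ list one then isolates two $\mathrm{SO}_4(\mathbb{C})$-compactifications and one $\mathrm{Sp}_4(\mathbb{C})$-compactification for which this happens.

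The main obstacle will be the concrete integration and comparison step. The weight $\pi$ has degree $2|\Phi^+|$, which is $4$ for $\mathrm{SO}_4(\mathbb{C})$ and $8$ for $\mathrm{Sp}_4(\mathbb{C})$, so evaluating the moments $\int_{P_+}\pi(y)\,dy$ and $\int_{P_+}y_i\,\pi(y)\,dy$ on each candidate polytope, and then verifying that the resulting barycenter actually lies outside $2\rho+\Xi$, is a delicate case-by-case polynomial integration over convex polygons. Once three compactifications are pinned down and the nonexistence of K\"ahler-Einstein metrics is confirmed, Theorem \ref{singular-type2} immediately yields the type II singularity formation claimed in Theorem \ref{SO(4)}.
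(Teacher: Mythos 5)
Your proposal is correct and follows essentially the same route as the paper: Section 5 of the paper identifies the smooth Fano compactifications of $\mathrm{SO}_4(\mathbb{C})$ and $\mathrm{Sp}_4(\mathbb{C})$ (via the classifications of Delcroix and Ruzzi), computes the weighted barycenters of the explicit rank-$2$ polytopes $P_+$, verifies that the Delcroix criterion fails for exactly two $\mathrm{SO}_4(\mathbb{C})$-manifolds and one $\mathrm{Sp}_4(\mathbb{C})$-manifold, and then invokes Theorem \ref{singular-type2}. The only work you defer, the explicit polytope data and the polynomial moment integrations, is precisely what the paper carries out case by case.
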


Since $\mathrm{SO}_4(\mathbb{C})$ and $\mathrm{Sp}_4(\mathbb{C})$ are both semisimple, \footnote{In fact, $\mathrm{Sp}_4(\mathbb{C})$ is simple and $\mathrm{SO}_4(\mathbb{C})$ is semisimple, however $\mathrm{SO}_n(\mathbb{C})$ is simple when $n\ge 5$.}
Theorem \ref{SO(4)} is deduced directly from Theorem \ref{singular-type2}.
Theorem \ref{SO(4)} provides the first example of Fano manifolds on which the K\"ahler-Ricci flow develops singularity of type II and solved the problem raised by the second named author. \footnote{We would like to thank the referee for telling  us that Sz\'ekelyhidi and Delcroix have a related speculation on  the limit behavior of K\"ahler-Ricci flow  on those  Fano manifolds (cf. \cite[Page 79]{Del1}).}

 We note that the  K\"ahler metrics of  flow (\ref{kahler-Ricci-flow}) preserves $K\times K$-invariant if the initial metric  $\omega_0$ is  $K\times K$-invariant.  Then by the contradiction argument,  the proof of Theorem \ref{singular-type2} reduces to studying  the geometric deformation of $G$-manifolds with $K\times K$-invariant metrics $\omega_i$ in the  smooth topology  (cf. Proposition \ref{g-structure} and Proposition \ref{bi-holo}).  Although we shall assume the $K\times K$-invariant condition on those $\omega_i$ in proofs  of both of Proposition \ref{g-structure} and Proposition \ref{bi-holo},   the $K\times K$-invariant condition for $\omega_0$ in Theorem \ref{singular-type2} can be removed   by using a recent result for  the uniqueness problem of limits of K\"ahler-Ricci flow  in \cite{WangZ2, HL}.(cf. Theorem \ref{singular-type2-2}).

There is a way to remove the semi-simplicity condition on $G$ in Theorem \ref{singular-type2}
by examining all possible Fano $G$-compactifications which admit K\"ahler-Einstein metrics or K\"ahler-Ricci solitons with a underlying differential structure, since the Cheeger-Gromov limit is a K\"ahler-Ricci soliton by Perelman's result. For examples, this can be done for $\mathrm{SO}_4(\mathbb{C})$-manifolds
and $\mathrm{Sp}_4(\mathbb{C})$-manifolds in our case  based on the computation of associated polytopes in \cite{De12} and \cite{Ruzzi} ( also see Section 6). In fact, there are two ways to prove Theorem \ref{SO(4)} by using only Proposition \ref{g-structure}. The one is that
the volumes of $G$-manifolds of $\mathrm{SO}_4(\mathbb{C})$ ($\mathrm{Sp}_4(\mathbb{C})$) are different by the volume formula (cf. \cite{Del1, LZZ}) since volumes of corresponding ploytopes are different. Thus these Fano manifolds can not be related by jumping  complex structures. The other is to check that these Fano manifolds are all $K$-unstable (cf. Section 5). Then the limit in the flow can not be a K\"ahler-Einstein manifold, to see details in the end of proof of Theorem \ref{singular-type2}. Hence, the flow must develop singularities of type II.

 The proof of  of Theorem \ref{singular-type2} contains two main steps:  proofs of Proposition \ref{g-structure} and Proposition \ref{bi-holo}.  In  Proposition \ref{g-structure}, we prove that the   Cheeger-Gromov limit  $M_\infty$    of  $\omega_i$ is still a $G$-manifold.  Our idea is to  study the deformation of holomorphic vector fields induced by the group $G$  under $\omega_i$ (cf. Proposition \ref{vector-limit}) and to prove the limit vector fields induce an open $G$-orbit $\hat{\mathcal O}_\infty^0$ (cf. (\ref{open-g-orbit})). The $K\times K$-invariant condition of $\omega_i$  will play a crucial role  so that the convergence of toric vector fields  can control other  holomorphic vector fields  (cf. Lemma \ref{nondegenerate-vector}). In  the proof of Lemma \ref{nondegenerate-vector},   we use a technique of partial $C^0$-estimate from \cite{Ti90, T2} to compare the original metrics $\omega_i$ with the   induced metrics  by the Fubini-Study metric  from the Kodaira embeddings (cf. Lemma \ref{metric-equiv}). The advantage to use the Kodaira embeddings is:   there is  a  natural $G$-cation  on the limit space $M_\infty$  (cf. (\ref{action def})), which generates holomorphic vector fields  on    $\mathbb CP^N$ with a free  torus action  on $\hat{\mathcal O}_\infty^0$  as well as the diffeomorphisms between the complex submanifolds of   embedding images can be controlled (cf. (\ref{hat-omega})).

Proposition  \ref{bi-holo} is a   corollary  of   Theorem \ref{unique-complexstructure},  where we  prove a uniqueness result about  complex structures of  $G$-manifolds when  $G$ is semisimple. Theorem \ref{unique-complexstructure} is an independent result on the uniqueness of  complex structures,  even in the sense of $G$-equivariant  deformation of $G$-manifolds \cite{PP}. Our proof  reduces to proving a uniqueness result of  complex structures on a product of  toric manifolds  (cf. Lemma \ref{uniqueness of splitting-structure}).

The organization of paper is as follows. In Section 2, we review an existence result of K\"ahler-Einstein metrics on $G$-manifolds by Delcroix. In Section 3, we study the deformation of holomorphic vector fields induced by the group $G$ and prove Proposition \ref{vector-limit}.  Theorem \ref{singular-type2} is proved in Section 5, while   Proposition \ref{g-structure} and Theorem \ref{unique-complexstructure}   are proved in Section 4 and  Section 5, respectively.
 At last, in Section 6, we give all Fano compactifications of $\mathrm{SO}_4(\mathbb{C})$ and $\mathrm{Sp}_4(\mathbb{C})$.

\vskip5mm
\noindent{\bf Note.}  The preprint of paper was first posted in the summer of 2018 \cite{LTZ1}.  After that, we find the assumption  that $G$ is semisimple can be  removed   in Theorem \ref{singular-type2}.
In the appendix of paper, we will  give an analytic proof of  Proposition \ref{bi-holo} without this  assumption  and so we get the improvement of  Theorem \ref{singular-type2} (cf. Theorem \ref{singular-type2-new}).

\section {Preliminaries on $G$-manifolds}

In this paper, we always assume that $G$ is a reductive Lie group which is a complexification of compact Lie group $K$. Let $T^\mathbb C$ be an $r$-dimensional maximal complex torus of $G$ with its Lie algebra $\mathfrak t^{\mathbb C}$ and $\mathfrak M$ the group of characters of $\mathfrak t^{\mathbb C}$. Denote the roots system of $(G,T^\mathbb C)$ in $\mathfrak M$ by $\Phi$ and choose a set of positive roots by $\Phi_+$.
Then each element in $\Phi$ can be regarded as the one of $\mathfrak a^*$, where $\mathfrak a^*$ is the dual of the non-compact part $\mathfrak a$ of $\mathfrak t^{\mathbb C}$.

\subsection{Local holomorphic coordinates} In this subsection, we recall local holomorphic coordinates on $G$ used in \cite{Del1}. By the standard Cartan decomposition, we can decompose $\mathfrak g$ as
\begin{align}\label{g-decomposition}\mathfrak g=\mathfrak t^\mathbb C\oplus\left(\oplus_{\alpha\in\Phi}V_{\alpha}\right),
\end{align}
where $V_{\alpha}=\{X\in\mathfrak g|~{\rm ad}_H(X)=\alpha(H)X,~\forall~ H\in\mathfrak t^\mathbb C\}$ is the eigenspace of complex dimension $1$ with respect to the root $\alpha$. By \cite{Hel}, one can choose $X_{\alpha}\in V_{\alpha}$ such that $X_{-\alpha}=-\iota(X_{\alpha})$ and
$[X_{\alpha},X_{-\alpha}]=\alpha^{\vee},$ where $\iota$ is the Cartan involution and $\alpha^{\vee}$ is the dual of $\alpha$ by the Killing form.
Let $E_{\alpha}=X_{\alpha}-X_{-\alpha}$ and $E_{-\alpha}=J(X_{\alpha}+X_{-\alpha})$. Denote by $\mathfrak k_{\alpha},\,\mathfrak k_{-\alpha}$ the real line spanned by $E_\alpha,\,E_{-\alpha}$, respectively.
Then we have the Cartan decomposition of Lie algebra $\mathfrak k$ of $K$,
$$\mathfrak k=\mathfrak t\oplus\left(\oplus_{\alpha\in\Phi_+}\left(\mathfrak k_{\alpha}\oplus\mathfrak k_{-\alpha}\right)\right),$$
where $\mathfrak t=\mathfrak t^{\mathbb C}\cap \mathfrak k$ is the compact part of Lie algebra of $T^\mathbb C$.
Choose a real basis $\{E^0_1,...,E^0_r\}$ of $\mathfrak t$. Then $\{E^0_1,...,E^0_r\}$ together with $\{E_{\alpha},E_{-\alpha}\}_{\alpha\in\Phi_+}$ form a real basis of $\mathfrak k$, which is indexed by $\{E_1,...,E_n\}$. $\{E_1,...,E_n\}$ can  be also regarded as a complex basis of $\mathfrak g$.

For any $g\in G$, we define local coordinates $\{z_{(g)}^i\}_{i=1,...,n}$ on a neighborhood of $g$ by
$$(z_{(g)}^i)\to\exp(z_{(g)}^iE_i)g.$$
It is easy to see that $\theta^i|_g=dz_{(g)}^i|_g$, where $\theta^i$ is the dual of $E_i$, which is a right-invariant holomorphic $1$-form. Thus
$\displaystyle{\wedge_{i=1}^n\left(dz_{(g)}^i\wedge d\bar{z_{(g)}^i}\right)}|_g$ is also a right-invariant $(n,n)$-form, which defines a Haar measure $dV_G$.

For a smooth $K\times K$-invariant function $\Psi$ on $G$, we define a Weyl-invariant convex function $\psi$ on ${\mathfrak a}$ (called the associated function of $\Psi$ \cite{AL}) by
\begin{align}\label{relation-psi}\Psi( \exp(\cdot))=\psi(\cdot):~{\mathfrak a}\to\mathbb R.
\end{align}
The complex Hessian of the $K\times K$-invariant function $\Psi$ in the above local coordinates was computed by Delcroix as follows \cite[Theorem 1.2]{Del1}.

\begin{lem}\label{Hessian}
Let $\Psi$ be a $K\times K$ invariant function on $G$, and $\psi$ the associated function
on $\mathfrak{a}$. Let $\Phi_+=\{\alpha_{(1)},...,\alpha_{(\frac{n-r}{2})}\}$. Then for $x\in \mathfrak{a}_+=\{x'\in \mathfrak{a}|~ \alpha(x')>0,~\forall~\alpha\in \Phi_+\}$,
the complex Hessian matrix of $\Psi$ in the above coordinates is diagonal by blocks, and equals to
\begin{equation}\label{+21}
\mathrm{Hess}_{\mathbb{C}}(\Psi)(\exp(x)) =
\begin{pmatrix}
\frac{1}{4}\mathrm{Hess}_{\mathbb{R}}(\psi)(x)& 0 & & & 0 \\
0 & M_{\alpha_{(1)}}(x) & & & 0 \\
0 & 0 & \ddots & & \vdots \\
\vdots & \vdots & & \ddots & 0\\
0 & 0 & & & M_{\alpha_{(\frac{n-r}{2})}}(x)\\
\end{pmatrix},
\end{equation}
 where
\[M_{\alpha_{(i)}}(x) = \frac{1}{2}\langle\alpha_{(i)},\nabla \psi(x)\rangle
\begin{pmatrix}
\coth\alpha_{(i)}(x) & \sqrt{-1} \\
-\sqrt{-1} & \coth\alpha_{(i)}(x) \\
\end{pmatrix}.
\]
\end{lem}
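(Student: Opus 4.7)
The strategy is to exploit the $K\times K$-invariance to reduce $\Psi$ to the Weyl-invariant function $\psi$ via the $KAK$-decomposition $G=K\exp(\mathfrak{a}_+)K$, and then to expand the Cartan projection through second order. Near $\exp(x_0)$ with $x_0\in\mathfrak{a}_+$, every $g=\exp(z^iE_i)\exp(x_0)$ admits a locally unique presentation $g=k_1\exp(y(z,\bar z))k_2$ with $y\in\mathfrak{a}_+$, and invariance gives $\Psi(g)=\psi(y(z,\bar z))$. The complex Hessian of $\Psi$ at $\exp(x_0)$ in the coordinates $\{z^i\}$ is thus determined by the second-order Taylor expansion of $y(z,\bar z)$ at $z=0$, which I compute block by block along the decomposition $\mathfrak{g}=\mathfrak{t}^{\mathbb{C}}\oplus\bigoplus_{\alpha\in\Phi}V_\alpha$.

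For the torus directions $i=1,\dots,r$, the abelian structure of $\mathfrak{t}^{\mathbb{C}}=\mathfrak{t}\oplus\mathfrak{a}$ yields the exact factorization
\[
\exp(z^iE_i^0)\exp(x_0)=\exp(\mathrm{Re}(z^i)E_i^0)\cdot\exp(x_0+\mathrm{Im}(z^i)\sqrt{-1}E_i^0),
\]
whose first factor lies in $T\subset K$. By $K$-invariance, $\Psi$ depends only on the $\mathfrak{a}$-shift $\mathrm{Im}(z^i)\sqrt{-1}E_i^0$. Writing $\mathrm{Im}(z^j)=(z^j-\bar z^j)/(2\sqrt{-1})$ and applying $\partial_{z^i}\partial_{\bar z^j}$ at $z=0$ then produces the factor $\frac{1}{4}$ in front of $\mathrm{Hess}_{\mathbb{R}}(\psi)(x_0)$ evaluated in the basis $\{\sqrt{-1}E_j^0\}$ of $\mathfrak{a}$, giving the top-left block.

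For each root $\alpha\in\Phi_+$, I would treat the pair of complex coordinates associated with $(E_\alpha,E_{-\alpha})$ together. The key observation is that $X_\alpha,X_{-\alpha},\alpha^\vee$ generate an $\mathfrak{sl}_2$-subalgebra, so the computation reduces to an explicit $KAK$-decomposition in $SL_2(\mathbb{C})$ of the perturbation $\exp(z_\alpha E_\alpha+z_{-\alpha}E_{-\alpha})\exp(x_0)$. Reading off the new Cartan parameter from the singular values of $g$ and expanding to second order in the four real variables $(u_\alpha,v_\alpha,u_{-\alpha},v_{-\alpha})$, using $E_\alpha=X_\alpha-X_{-\alpha}$ and $E_{-\alpha}=\sqrt{-1}(X_\alpha+X_{-\alpha})$, one finds that the shift of $y$ along $\alpha^\vee$ is proportional to $(|z_\alpha|^2+|z_{-\alpha}|^2)\coth\alpha(x_0)$ plus off-diagonal couplings in $z_\alpha\bar z_{-\alpha}$ and $\bar z_\alpha z_{-\alpha}$ that produce the $\pm\sqrt{-1}$ entries of $M_\alpha$; the prefactor $\frac{1}{2}\langle\alpha,\nabla\psi(x_0)\rangle$ then appears from the chain rule when differentiating $\psi$ along $\alpha^\vee$.

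Finally, the block-diagonality follows from $\mathrm{Ad}(T)$-invariance: conjugation by $\exp(h)\in T$ preserves $\Psi$, fixes $\exp(x_0)$, acts trivially on the torus directions, and acts as a nontrivial $SO(2)$-rotation on each plane $\mathrm{span}(E_\alpha,E_{-\alpha})$ with distinct rates for distinct $\alpha\in\Phi_+$. Averaging the Hessian over $T$ then eliminates every cross-term between the torus block and the root blocks, and between distinct root blocks. The main obstacle is the explicit $SL_2(\mathbb{C})$ computation in the preceding step, particularly the tracking of the imaginary off-diagonal entries of $M_\alpha$; these arise from the mixed real-imaginary partial derivatives of the Cartan parameter, which do not vanish at the origin, whereas the purely real-real cross derivatives vanish by $K$-invariance and the purely imaginary-imaginary ones vanish by the symmetry of the $SL_2$ computation in $(v_\alpha,v_{-\alpha})$.
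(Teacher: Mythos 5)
The paper does not actually prove this lemma: it is quoted verbatim as Delcroix's result (\cite[Theorem 1.2]{Del1}), so there is no internal proof to compare against. Judged on its own, your outline is correct and is essentially a reconstruction of Delcroix's computation. The torus block is right: commutativity of $\mathfrak t^{\mathbb C}$ plus left $T$-invariance reduces $\Psi$ to $\psi(x_0+\mathrm{Im}(z^j)JE_j^0)$, and $\mathrm{Im}(z)=(z-\bar z)/2\sqrt{-1}$ gives the factor $\frac14$. For the root blocks, your reduction to the rank-one subgroup is legitimate because $x_0$ splits as a coroot component plus a part annihilated by $\alpha$ which centralizes the $\mathfrak{sl}_2(\alpha)$-subgroup, so the Cartan projection of $\exp(z_\alpha E_\alpha+z_{-\alpha}E_{-\alpha})\exp(x_0)$ only moves along $\alpha^\vee$; expanding $\mathrm{tr}(gg^*)$ to second order one indeed finds the mixed part $\tfrac12\coth\alpha(x_0)\,(|z_\alpha|^2+|z_{-\alpha}|^2)+\tfrac{\sqrt{-1}}{2}(z_\alpha\bar z_{-\alpha}-z_{-\alpha}\bar z_\alpha)$ for the shift along the coroot, and since the first-order variation vanishes only $\nabla\psi$ enters, exactly reproducing $M_\alpha$; your claims about which real cross-derivatives vanish check out ($\Psi$ is literally constant in the $(u_\alpha,u_{-\alpha})$-directions, and no $v_\alpha v_{-\alpha}$ term appears). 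The block-diagonality argument via conjugation by $T$ (which fixes $\exp(x_0)$, acts trivially on $\mathfrak t$ and as an $SO(2)$-rotation with character $\alpha$ on each root plane) is also sound, since distinct positive roots are distinct characters in a reduced system. The difference from Delcroix's published argument is mainly technical: he computes the complex Hessian directly through the coordinates, using Baker--Campbell--Hausdorff to second order and the action of $\mathrm{Ad}(\exp(-x))$ on the root vectors, rather than expanding the Cartan projection through singular values; your route buys a very concrete rank-one computation at the cost of having to justify smoothness and local uniqueness of the Cartan projection near the regular point $\exp(x_0)$ (true on $\mathfrak a_+$) and of tracking the normalization identifying the coroot-directional derivative of $\psi$ with $\langle\alpha,\nabla\psi\rangle$. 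These are minor points; the proposal is a valid proof strategy.
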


\subsection{K\"ahler-Einstein metrics on $G$-manifolds}

Let $M$ be a $G$-manifold as a compactification of $G$.  We call $(M, L)$  a {\it polarized compactification} of $G$ if $L$ is a $G\times G$-linearized ample line bundle on $M$. In this paper, we just consider $L=K_M^{-1}$.
Since $M$ contains an $r$-dimensional toric manifold $Z$, there is an associated polytope $P$ of $Z$ induced by $(M,L)$, which is a lattice polytope in the lattice $\mathfrak M$ \cite{AB1, AB2}.
Let $P_+$ be the positive part of $P$ defined by $\Phi_+$ such that $P_+=\{y\in P|~ \langle\alpha,y\rangle >0, ~\forall~ \alpha\in \Phi_+\}$.
Here $ \langle\cdot,\cdot \rangle$ denotes the Cartan-Killing inner product on ${\mathfrak a^*}$. We call $W_\alpha=\{y\in\mathfrak a^* |~ \langle\alpha,y\rangle =0\}$ the Weyl wall associated to $\alpha\in \Phi_+$.

Let $\rho={\frac 1 2}\sum_{\alpha\in\Phi_+}\alpha$ be as a character in $\mathfrak a^*$ and $\Xi$ the relative interior of the cone generated by $\Phi_+$. We set a function on ${\mathfrak a^*}$ by
$$\pi(y)=\prod_{\alpha\in\Phi_+}\langle\alpha,y\rangle^2, ~y\in {\mathfrak a^*}. $$
Clearly, $\pi(y)$ vanishes on $ W_\alpha$ for each $\alpha\in \Phi_+$. Denote by $2P_+$ a dilation of $P_+$ by  $2$. We define the  barycentre of $2P_+$ with respect to the weighted measure $\pi(y)dy$ by
$${\rm bar}(2P_+)=\frac{\int_{2P_+}y\pi(y) \,dy}{\int_{2P_+}\pi(y) \,dy}.$$

In \cite{Del1}, Delcroix proved the following the existence result of K\"ahler-Einstein metrics on $G$-manifolds.

\begin{theo}\label{de} Let $M$ be a Fano $G$-manifold. Then $M$ admits a K\"ahler-Einstein metric if and only if
\begin{align}\label{bar-1}
{\rm bar}(2P_+)\in 4\rho+\Xi.
\end{align}
\end{theo}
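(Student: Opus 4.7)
The plan is to reduce the complex Monge-Amp\`ere equation for a K\"ahler-Einstein metric on $M$ to a real Monge-Amp\`ere equation on the positive Weyl chamber $\mathfrak{a}_+$ via $K\times K$-invariance, then transfer it to the polytope $2P_+$ by the Legendre transform, and finally analyze solvability through the weighted barycenter condition.

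First I would fix a reference $K\times K$-invariant potential $\Psi_0$ with $\omega_0=\omega_g+\sqrt{-1}\partial\bar\partial\Psi_0\in 2\pi c_1(M)$ and search for $\omega_{KE}=\omega_0+\sqrt{-1}\partial\bar\partial\Psi$ with $\Psi$ being $K\times K$-invariant. Applying Lemma \ref{Hessian} to write $\det(\mathrm{Hess}_{\mathbb C}\Psi)$ on $\mathfrak a_+$ as a product of the real Hessian of the associated function $\psi$ and the $M_{\alpha_{(i)}}$ blocks, and combining with the Haar measure factor $J(x)=\prod_{\alpha\in\Phi_+}\sinh^2\alpha(x)$ on $\mathfrak a_+$, the equation $(\omega_0+\sqrt{-1}\partial\bar\partial\Psi)^n=e^{h_{\omega_0}-\Psi}\omega_0^n$ reduces to a real equation of the form
\[
\det(\mathrm{Hess}_{\mathbb R}\psi(x))\,\prod_{\alpha\in\Phi_+}\langle\alpha,\nabla\psi(x)\rangle\,=\,C\,e^{-\psi(x)}\,\prod_{\alpha\in\Phi_+}\frac{\sinh^{2}\alpha(x)}{\coth\alpha(x)},
\]
on the Weyl chamber $\mathfrak a_+$, modulo explicit correction terms. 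The essential asymptotic feature is that the combination $\sinh^2\alpha(x)\cdot\coth^{-1}\alpha(x)$ contributes $e^{2\alpha(x)}$ for $x$ large, so the effective linear drift in the log of the right-hand side is $-2\cdot 2\rho(x)=-4\rho(x)$.

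Next I would invoke the Legendre duality. The gradient map $\nabla\psi$ sends $\mathfrak a_+$ onto $2P_+$, since the polytope is defined by the moment map of the toric part and $P_+$ is cut out by the Weyl walls $W_\alpha$. Writing $u(y)=\langle x,y\rangle-\psi(x)$ turns the above equation into a weighted Monge-Amp\`ere equation on $2P_+$,
\[
\det(\mathrm{Hess}_{\mathbb R}u(y))\,\pi(y)\,=\,e^{\,u(y)-\langle x,y\rangle+\cdots},
\]
with the weight $\pi(y)=\prod_{\alpha\in\Phi_+}\langle\alpha,y\rangle^2$ arising precisely from the $M_{\alpha_{(i)}}$ blocks. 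This is the $G$-analogue of Wang--Zhu's toric reduction, with $\pi(y)\,dy$ replacing Lebesgue measure. The existence problem then splits into the standard continuity method $\mathrm{Ric}(\omega_t)=t\omega_t+(1-t)\omega_0$ combined with a uniform $C^0$-estimate on $u$.

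For the necessity of $bar(2P_+)\in 4\rho+\Xi$, I would compute the generalized Futaki invariant along one-parameter subgroups generated by $\xi\in\Xi$. The reduction above shows that this invariant is proportional to $\langle bar(2P_+)-4\rho,\xi\rangle$; if some $\xi\in\Xi$ has non-positive pairing, then the Ding functional is not proper in that direction and no K\"ahler-Einstein metric can exist.

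The sufficiency direction is what I expect to be the main obstacle. The goal is to show that $bar(2P_+)\in 4\rho+\Xi$ forces properness of the Ding functional on $K\times K$-invariant potentials, which via the Legendre reduction becomes coercivity of a convex functional on $2P_+$ of the schematic form
\[
\mathcal L(u)=\int_{2P_+}u(y)\,\pi(y)\,dy\,-\,\langle 4\rho,\nabla\text{(affine gauge)}\rangle+\text{lower order},
\]
whose dominant linear part is $\langle 4\rho-bar(2P_+),\cdot\rangle$. Coercivity then follows from the barycenter hypothesis by a convex analysis argument in the spirit of Wang--Zhu, adapted to the weighted measure $\pi(y)\,dy$. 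The two most delicate technical points are: (i) controlling $u$ near the Weyl walls $W_\alpha$ where $\pi(y)$ degenerates to order $2$, which requires careful boundary analysis of the Legendre dual together with the correct asymptotic expansion of the reference potential there; and (ii) passing the resulting $C^0$-estimate on the polytope back to a global $C^0$-estimate on $M$, using the $K\times K$-invariance to extend control from $\mathfrak a_+$ to all of $G$ and then to the compactification. Once this estimate is in place, standard higher-order Calabi-type estimates close the continuity method and produce the desired K\"ahler-Einstein metric.
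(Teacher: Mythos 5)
Your overall strategy---use Lemma \ref{Hessian} to reduce the K\"ahler--Einstein equation for $K\times K$-invariant potentials to a real Monge--Amp\`ere equation on $\mathfrak a_+$, pass to $2P_+$ by Legendre duality, and run a Wang--Zhu-type argument for the weighted measure $\pi(y)\,dy$---is precisely the route the paper attributes to Delcroix \cite{Del1} (with the properness variant in \cite{LZZ}); the paper itself does not reprove Theorem \ref{de}. However, your necessity argument has a genuine gap. For semisimple $G$ (exactly the cases this paper needs, e.g.\ $\mathrm{SO}_4(\mathbb C)$ and $\mathrm{Sp}_4(\mathbb C)$), every one-parameter subgroup of $T^{\mathbb C}\subset G$ induces a holomorphic vector field lying in the image of $\mathfrak g\oplus\mathfrak g$, which is its own derived algebra, so the classical/generalized Futaki character vanishes identically on all such fields; indeed Section 5 of the paper points out that the Futaki invariant vanishes for the unstable examples even though $bar(2P_+)\notin\overline{4\rho+\Xi}$. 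Hence no invariant "computed along one-parameter subgroups" can be proportional to $\langle bar(2P_+)-4\rho,\xi\rangle$. The destabilizing directions are not automorphism orbits: the linear ray $\psi(\cdot+t\xi)$ is not Weyl-invariant and leaves the space of $K\times K$-invariant potentials, and the correct objects are degenerations built from Weyl-invariant piecewise rationally linear functions (non-product test configurations), as in \cite{LZZ} and \cite{De12}; the slope $\langle bar(2P_+)-4\rho,\xi\rangle$ then appears for $\xi$ ranging over the dual cone $\overline{\mathfrak a_+}\subset\mathfrak a$, not over $\Xi\subset\mathfrak a^*$ as you wrote. Without this ingredient the "only if" direction is unproven.

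Two smaller points. Your displayed reduction has the wrong powers: since $\det M_{\alpha}=\tfrac14\langle\alpha,\nabla\psi\rangle^2(\coth^2\alpha(x)-1)=\tfrac14\langle\alpha,\nabla\psi\rangle^2\sinh^{-2}\alpha(x)$, the reduced equation is $\det(\mathrm{Hess}_{\mathbb R}\psi)\prod_{\alpha\in\Phi_+}\langle\alpha,\nabla\psi\rangle^2=C\,e^{-\psi}\prod_{\alpha\in\Phi_+}\sinh^2\alpha(x)$; the $\coth$ factors cancel, and it is this squared gradient factor that produces the weight $\pi(y)=\prod_{\alpha\in\Phi_+}\langle\alpha,y\rangle^2$ you use later, so your first-power version is internally inconsistent with your own polytope equation (the $e^{4\rho(x)}$ asymptotics, and hence the barycenter condition, are unaffected). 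Also, for reductive $G$ with positive-dimensional center, properness/coercivity can only hold modulo the action of $Z(G)$, as the paper states when quoting \cite{LZZ}; your coercivity claim should be formulated accordingly, or else the continuity method must be closed via Delcroix's direct $C^0$-estimate, which is insensitive to this normalization only after fixing the central translations.
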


By extending the argument for toric Fano manifolds in \cite{WZ}, Delcroix obtained a prior $C^0$-estimate for a class of real Monge-Amp\`ere equations on the cone $\mathfrak a_+\subset  \mathfrak a=\mathbb R^r$ to prove
Theorem \ref{de}, where $\mathfrak a_+=\{x\in \mathfrak a |~ \alpha(x) >0, ~\forall~ \alpha\in \Phi_+\}$. Another proof of Theorem \ref{de} was latterly  given in \cite{LZZ} by verifying the properness of $K$-energy $\mu (\phi)$ for $K\times K$-invariant K\"ahler potentials $\phi$ modulo the center $Z(G)$ of $G$. In fact, it was proved under (\ref{bar-1}) that there exist two positive constants $\delta, C_\delta$ such that
\begin{eqnarray*}
\mu (\phi)\geq \delta \inf_{\sigma\in Z(G) } I(\phi_\sigma)-C_\delta,
\end{eqnarray*}
where $I(\phi)=\int_M \phi(\omega^n -\omega_\phi^n)$ with a $K\times K$-invariant background K\"ahler metric $\omega\in 2\pi c_1(M)$, and $\phi_\sigma$ is an induced K\"ahler potential defined by $$\omega_{\phi_\sigma}=\sigma^*(\omega+\sqrt{-1}\partial\bar\partial \phi)= \omega+\sqrt{-1}\partial\bar\partial\phi_\sigma.$$

It was also showed in \cite{LZZ} that (\ref{bar-1}) is actually a $K$-stability condition in terms of \cite {T1} and \cite{D1} by constructing a $C^*$-action through a Weyl-invariant piece-wise rationally linear function.  In particular,
$M$ is $K$-unstable if ${\rm bar}(2P_+) \not\in \overline { 4\rho+\Xi}$. A more general construction of $C^*$-actions was also discussed in \cite{De12}.

\section{Deformation of holomorphic vector fields}

In this section, we give a description on  the deformation of holomorphic vector fields generated by $G$ on a $G$-manifold $M$. We introduce

\begin{defi}\label{limit-vector} Let $(M,g_i)$ be a sequence of Riemannian metrics on a closed  manifold $M$ which  has a limit  $(M_\infty, g_\infty)$ in sense of Cheeger-Gromov topology.  A sequence of tangent vector fields $(M, X_i, g_i)$ is  called  convergent to $(M_\infty, X_\infty, g_\infty)$ if
there are diffeomorphisms $F_i: M_\infty\to M$ such that
$$F_i^* g_i     \stackrel{C^\infty}{\longrightarrow} g_\infty, ~ (F_i^{-1})_* X_i \stackrel{C^\infty}{\longrightarrow} X_\infty, ~{\rm on}~ M_\infty.$$

\end{defi}

\begin{rem}\label{equivalent-convergence}
 Let $(M,g_i')$ be another convergent  sequence of Riemannian metrics on  $M$ with a Cheeger-Gromov limit  $(M_\infty, g_\infty')$, which satisfies
$$ F_i^* g_i'     \stackrel{C^\infty}{\longrightarrow} g_\infty', $$
 Then   the tangent vector field  $(M, X_i, g_i')$ is  also  convergent to $(M_\infty, X_\infty, g_\infty')$ on $M_\infty$.

\end{rem}

Let $\sigma_i$ be a sequence of auto-diffeomorphisms of $M$ and $(M,g_i)$ be a sequence of Riemannian metrics in  Definition \ref{limit-vector}. Then
$$(\sigma_i^{-1}\cdot F_i)^*  (\sigma_i^*g_i)     \stackrel{C^\infty}{\longrightarrow} g_\infty,$$
but,
the sequence of $((\sigma_i^{-1}\cdot F_i)^{-1})_*X_i$ may converge to a different limit of tangent vector fields on $M_\infty$.
 Thus Definition \ref{limit-vector} is not intrinsic in general.  However, there are  some cases in which  the limit of tangent vector fields  does not change after  holomorphic  transformations.

\begin{ex} Let $M$ be an $n$-dimensional toric manifold and $X=\sum_\alpha a_\alpha\frac{\partial}{\partial z^\alpha}$ be a torus vector field in the affine coordinates $(z^1,..., z^n)$. Then any torus action $\sigma$ on $M$ is given by
$$z\to z+z_0'$$
for some $z_0'$. Thus $\sigma_* X=X.$  Let $\sigma_i$ be a sequence of  torus actions.
We consider a sequence of torus invariant K\"ahler metrics $g_i$. Suppose that   $(M, X, g_i)$ is   convergent to $(M_\infty, X_\infty, g_\infty)$ in sense of Definition \ref{limit-vector}.  Then we still  get
$$(\sigma_i^{-1}\cdot F_i)^*  (\sigma_i^*g_i)     \stackrel{C^\infty}{\longrightarrow} g_\infty, ~((\sigma_i^{-1}\cdot F_i)^{-1})_*X = ( F_i^{-1})_*X \stackrel{C^\infty}{\longrightarrow} X_\infty. $$
\end{ex}

Now we assume that  $X$ is  a {right}-invariant holomorphic vector field on a $G$-manifold $M$ as an element of Lie algebra $\mathfrak g$ of $G$ with
${\rm im}(X)\in \mathfrak k$, where $ \mathfrak k$ is the Lie algebra of $K$. We choose a {$K\times K$-invariant} metric $\omega\in 2\pi c_1(M)$ as in Section 2. Then by the Hodge theorem, there is a real-valued smooth function $f$ (usually called a potential of $X$) on $M$ such that
$$L_X\omega=\sqrt{-1}\partial\bar\partial f.$$
  Set
\begin{align}\label{set-max}M_X^1=\{x\in M| ~ f(x)= \max_M f\}
\end{align}
and
\begin{align}\label{set-min} M_X^2=\{x\in M| ~ f(x)= \min_M f\}.
\end{align}

The following is our main result in this section.

\begin{prop}\label{vector-limit} Let $G$ be a reductive Lie group and $M$ a Fano $G$-manifold with an open {dense} G-orbit $\mathcal O$. Suppose that $(M,\omega_i, J)$ is a sequence of {$K\times K$-invariant} metrics in $2\pi c_1(M)$ which converges to a smooth limit $(M_\infty, \omega_\infty, J_\infty)$ in the Cheeger-Gromov topology.  Let $M_X^1, M_X^2$ be two sets as in (\ref{set-max}) and (\ref{set-min}), respectively. Then the following is true:
\begin{itemize}
\item[(a)]
There is a dense subset $\tilde M_X^1\subset M_X^1$ such that   any integral curve $\exp\{t{\rm re}(X)\}\cdot y_0$ generated by re$(X)$ from $y_0\in\mathcal O$ converges to a point in   $\tilde M_X^1$.  Similarly, $\exp\{t{\rm re}(-X)\}\cdot y_0$ converges to a point in a dense subset $\tilde M_X^2\subset M_X^2$.

\item [(b)] $(M, X, \omega_i)$ converges  to a non-trivial holomorphic vector field $X_\infty$ on $M_\infty$ such that
\begin{align}\label{set-decomposition} M_{\infty,X}^1\cup M_{\infty,X}^2 \subset M_{X_\infty}=\{x\in M_\infty|~X_\infty(x)=0\},
\end{align}
where $ M_{\infty,X}^1$ and $ M_{\infty,X}^2$ are the limits of $M_X^1$ and $M_X^2$ in  $M_\infty$  in the Gromov-Hausdorff topology, respectively, and both of them are non-empty and disjoint.
\end{itemize}

\end{prop}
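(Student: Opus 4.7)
The argument splits into two parts.

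For (a), the field $\text{re}(X)$, being right-invariant with $\text{im}(X)\in\mathfrak k$, lies in the Lie algebra of the algebraic group $G$ and generates an $\mathbb R$-action on the projective variety $M$ that extends to an algebraic $\mathbb C^\ast$-action; differentiating $L_X\omega=\sqrt{-1}\partial\bar\partial f$ shows $\text{re}(X)$ is a positive multiple of $\nabla f$, so $f$ strictly increases along every non-trivial orbit. I would then invoke the Bialynicki--Birula decomposition for this $\mathbb C^\ast$-action: the open orbit $\mathcal O$ is $G\times G$-invariant, hence $X$-invariant, and being open and dense it must coincide with the unique open (``plus'') BB stratum, whose attracting fixed-point component is precisely $M_X^1$. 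This yields $p(y_0):=\lim_{t\to+\infty}\exp(t\,\text{re}(X))\cdot y_0\in M_X^1$ for every $y_0\in\mathcal O$. For density, the BB contraction $\pi\colon\mathcal O\to M_X^1$, $y_0\mapsto p(y_0)$, is continuous and open as an affine-bundle projection, so $\tilde M_X^1:=\pi(\mathcal O)$ is dense in $M_X^1$. The assertion for $\tilde M_X^2$ follows by applying the same reasoning to $-X$.

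For (b), let $\Psi_i\colon M\to M$ be the Cheeger--Gromov diffeomorphisms with $\Psi_i^\ast\omega_i\to\omega_\infty$ and $\Psi_i^\ast J\to J_\infty$ in $C^\infty$. Let $h_i$ be the $\omega_i$-potential of $X$ normalized by $\int_M h_i\omega_i^n=0$, and set $X_i=\Psi_i^\ast X$ and $f_i=\Psi_i^\ast h_i$. The $C^\infty$-convergence gives uniformly bounded geometry of $\Psi_i^\ast\omega_i$; combined with standard Bochner/elliptic estimates for holomorphic vector fields, this yields uniform $C^k$-bounds on $X_i$ and $f_i$, so Arzela--Ascoli extracts a subsequence with $X_i\to X_\infty$ and $f_i\to f_\infty$ in $C^\infty$, and $X_\infty$ is $J_\infty$-holomorphic with $L_{X_\infty}\omega_\infty=\sqrt{-1}\partial\bar\partial f_\infty$.

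The principal obstacle is the non-triviality $X_\infty\not\equiv 0$, and this is where the $K\times K$-invariance is decisive. Conjugating by an element of $K$ one may assume $\text{im}(X)\in\mathfrak t$, and then Lemma \ref{Hessian} together with the Delcroix coordinates identifies $h_i$ on the open orbit with a component of $\nabla\psi_i$, where $\psi_i$ is the Weyl-invariant function associated to $\omega_i$. Since $\nabla\psi_i(\mathfrak a_+)$ equals the moment polytope of $(M,K_M^{-1})$ and is therefore independent of $i$, this gives a uniform oscillation bound $\max_M h_i-\min_M h_i\ge c_0>0$ with $c_0$ depending only on $X$ and the polytope. Passing this to the $C^0$-limit forces $f_\infty$ to be non-constant, and thus $X_\infty\neq 0$. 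Finally, $\Psi_i^{-1}(M_X^{1,2})$ are the extremal level sets of $f_i$, so their Hausdorff limits $M_{\infty,X}^{1,2}$ are non-empty by compactness, disjoint by the uniform oscillation gap, and the identity $df_i|_{M_X^{1,2}}=0$ passes to the limit to give $M_{\infty,X}^1\cup M_{\infty,X}^2\subset\{X_\infty=0\}=M_{X_\infty}$.
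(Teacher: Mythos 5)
Your part (a) rests on a step that fails in the generality the proposition requires. You assert that any right-invariant $X$ with ${\rm im}(X)\in\mathfrak k$ generates a flow that ``extends to an algebraic $\mathbb C^\ast$-action'' and then invoke the Bialynicki--Birula decomposition. But for such $X$ the closure of the one-parameter subgroup $\{\exp(tX)\}$ is in general a torus of rank $\ge 2$ (any irrational direction in $\mathfrak a$ already does this), so there is no $\mathbb C^\ast$-action and no BB stratification to appeal to; and this generality is actually used downstream in the paper, e.g.\ in the proof of Claim 2 in Section 4, where the proposition is applied to fields of the form $X_1+Y_l\in\mathfrak a$ with $Y_l\to0$, which are typically irrational. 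Even in the algebraic case your identification of $\mathcal O$ with the open BB stratum is not correct: already for $G=\mathbb C^\ast$ acting on $\mathbb P^1$ the open orbit is a proper subset of the open stratum, so ``two open dense sets must coincide'' is not an argument. What you need is only the inclusion of $\mathcal O$ in the open stratum, and that requires a reason --- for instance that the stratum is invariant under right translations (which commute with the flow by left translations) and meets $\mathcal O$, which is a single right-$G$-orbit. The paper avoids algebraicity entirely: it shows that the function $y\mapsto\lim_{t\to\infty}f(\exp(t\,{\rm re}(X))\cdot y)$ is locally constant on $\mathcal O$ (Claim 1, proved by comparing the orbit of $y=g$ with that of $g_0$ via right translation by $g_0^{-1}g$ and using the monotonicity (\ref{mono})), hence constant on the connected set $\mathcal O$, hence equal to $\max_M f$; density of $\tilde M_X^1$ is then obtained by a separation/connectedness argument rather than by an affine-bundle projection. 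As it stands, your (a) proves the statement only for those $X$ generating algebraic $\mathbb C^\ast$'s, which is a genuine gap.

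Part (b) is essentially sound and close to the paper's argument, with one legitimate variation: you obtain the non-degeneracy of $X_\infty$ from the $i$-independence of the image of $\nabla\psi^i$ (the moment polytope), giving a uniform lower bound on ${\rm osc}_M f_i$, whereas the paper observes directly that ${\rm osc}_M f_i={\rm osc}_M f$ and then rules out collapsing of the max/min sets onto each other (its Case 1)/Case 2) dichotomy); both hinge on the $K\times K$-invariance. Your uniform $C^k$-bounds are stated more loosely than the paper's bootstrap from (\ref{fi-equation}) and the bound on the Ricci potentials $h_i$, and should be anchored to such an equation, but the remaining points --- non-emptiness and disjointness of $M_{\infty,X}^1,M_{\infty,X}^2$ and the inclusion into $\{X_\infty=0\}$ by passing $df_i=0$ to the limit --- match the paper. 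The issue to repair is (a).
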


\begin{proof}
Clearly, $M_X\subset(M\setminus\mathcal O)$ and it  can be decomposed into a  union of disjointed subsets
\begin{align}\label{decom M}M_X=\{x\in M|~X(x)=0\}=M_X^1\cup M_X^2\cup M_X^3,\end{align}
where
$$M_X^3=\{x\in M|~f(x)=c\text{ is a critical value  of $f$},  ~c\not=\max_M f\text{ or }\min_M f\}.$$
We need to show  that any {$\exp\{t\rm{re}(X)\}$-orbit} in $\mathcal O$ converges to a point in $M_X^1$.

For any $c$ above, define a level set of $f$ in $ M_X^3$ by
\begin{align}\label{mc}M_c=\{x\in M_X^3|~ f(x)=c\}.
\end{align}
Then   there is a finite  set of such $c$ such that  $M_c\neq \emptyset$ since each $M_c\subset M_X$ is an analytic subvariety of $M$.
Suppose that $M_{c_0}\neq \emptyset$ for some $c_0$ and $x\in M_{c_0}$
and assume that there is a
base point $y_0\in \mathcal O$ such that
$$\lim_{t\to\infty}  \exp\{t{\rm re}(X)\}\cdot y_0=x.$$
 Since
$f$ is increasing along the integral curve $ \exp\{t{\rm re}(X)\}  \cdot y_0$ by the relation
\begin{align}\label{mono}
\frac{df}{dt} ({\rm exp}\{t {\rm re (X)}\}\cdot y_0)=|\nabla f|^2({\rm exp}\{t {\rm re}(X)\}\cdot y_0)>0,
\end{align}
 the limit
\begin{align}\label{limit-c}c_0=\lim_{t\to\infty} f({\exp}\{t \text{re}(X)\}\cdot y_0)
\end{align}
is well defined.
On the other hand, there is an integral curve from another base point $y'\in \mathcal O$ such that
$$\lim_{t\to\infty} {\rm exp}\{t {\rm re}(X)\}\cdot y'=x'\in M_X^1$$
and
$$ f(x')=\max_M f=\lim_{t\to\infty} f({\rm exp}\{t{\rm re}( X)\}\cdot y').$$
Thus we can define a class of subsets  in $\mathcal O$ associated to numbers $c\in [c_0, \max_M f]$ by
\begin{align}\label{c-number}\mathcal O_c=\{y\in \mathcal O|~ \lim_{t\to\infty} f({\rm exp}\{t {\rm re}(X)\}\cdot y)=c\}.
\end{align}
Note that the number $c$ in (\ref{c-number}) must be a critical value of the function $f$.
 Hence, there are finitely many such numbers $c$ as in (\ref{mc}).

\textbf{Claim 1:} Each $\mathcal O_c$ is an open set.

Let $y_0=g_0\in \mathcal O_c$ and $x_0\in M\setminus\mathcal O$ be the limit of ${\rm exp}\{t {\rm re}(X)\} \cdot y_0$ as $t\to\infty$.
Then there {are} two neighborhood $U_{x_0}$ and $V_{x_0}$ with $\overline {V_{x_0}}\subset U_{x_0}$ of $x_0$ such that $|f-c|\le \epsilon$ on $U_{x_0}$ and ${\rm exp}\{t {\rm re}(X)\} \cdot y_0\subset V_{x_0}$ for any $t\ge t_0$, where $\epsilon$ is a small number and $t_0$ is a large number. Since
$${\rm exp}\{t {\rm re}(X)\}\cdot y=[{\rm exp}\{t {\rm re}(X)\}\cdot g_0]\cdot (g_0^{-1}\cdot g),~ \forall ~ y=g\in \mathcal O,$$
and $g_0^{-1}\cdot g$ is a smooth map on $M$, we see that
\begin{align}\label{p-01}{\rm exp}\{t {\rm re}(X)\}\cdot y\in U_{x_0},~\forall ~t\ge t_0, \end{align}
as long as ${\rm dist} (g_0^{-1}\cdot g,{\rm Id})<<1$.
It follows that
$$c-\epsilon\le \lim_{t\to\infty} f({\rm exp}\{t {\rm re}(X)\}\cdot y)\le c+\epsilon.$$
Thus $\lim_{t\to\infty} f({\rm exp}\{t {\rm re}(X)\}\cdot y)$ must be $c$ since there is no other critical value of $f$ near $c$.
\textbf{Claim 1} is proved.

By \textbf{Claim 1}, there are finitely many disjoint open subsets $\mathcal O_{c_j}$ such that
$$ \mathcal O= \cup_j \mathcal O_{c_j}.$$
On the other hand, from \eqref{p-01}, one can show that each {$\Omega\cap\mathcal O_{c_j}$ is a closed set for any closed set $\Omega\subset \mathcal O$}. It follows that there is only one $\mathcal O_{c_j}$ through { $\Omega\cap \mathcal O$}, and as a consequence, it must be $\mathcal O_{\max_M f}$ if {$\mathcal O_{c_j}\cap \Omega\neq \emptyset$}. Thus $c_0= \max_M f$ and $M_c$ in (\ref{mc}) with $c$ defined by  (\ref{limit-c})  must be empty. In another word, any curve $\exp\{t\text{re}(X)\}\cdot y_0$ with $y_0\in \mathcal O$ converges to a point in $M_X^1$.

Next,  we show that the set
\begin{align}\label{m-eps}
\tilde M_X^1=\{x\in M_X^1|~ x=\lim_{t\to \infty} \exp\{t\text{re}(X)\}\cdot y,~{\rm for ~ some} ~y\in\mathcal O\}
\end{align}
is dense in $M_X^1$. On contrary, if
\begin{align}\label{E-1}\mathcal E_1=\overline{\tilde M_X^1}\not=M_X^1,\end{align}
then there is another closed subset $\mathcal E_2$,  which is disjointed with $\mathcal E_1$,  such that
$$ \mathcal E_1\cup \mathcal E_2 \subset M_X^1.$$
Note
 $$M_X^1=\cap_{\epsilon>0}\mathcal M^\epsilon, $$
where
$$\mathcal M^\epsilon=\{x|\max_{\hat M_\infty}f-\epsilon< f(x)\leq\max_{\hat M_\infty}f\}.$$
Thus for sufficiently small $\epsilon$, there are two disjoint open sets $\mathcal U^\epsilon_1,\mathcal U^\epsilon_2$ such that
$$ \mathcal U^\epsilon_1\cup\mathcal U^\epsilon_2 \subset \mathcal M^\epsilon ~{\rm and} ~\mathcal E_i\subset\mathcal U^\epsilon_i ~(i=1,2)$$
with the property
$$ f\ge \max_{M_\infty}f- \epsilon, ~\forall ~~x\in {\mathcal U^\epsilon_i} ~~{\rm and}~ f \equiv \max_{M_\infty}f- \epsilon, ~\forall ~x\in \partial {\mathcal U^\epsilon_i}, ~i=1,2.$$
On the other hand, by the monotonicity \eqref{mono} and the definition \eqref{m-eps}, any integral curve generated by ${\rm re}(X)$ starting in each $\mathcal U_i$ can not leave it. Since $\mathcal U_2\cap{\mathcal O}\not=\emptyset$,   there is a point $y_0\in\mathcal U_2$ such that
$$z_0=\lim_{t\to\infty} \exp\{t{\rm re}(X)\}\cdot y_0\in\mathcal E_2\subset M_X^1,$$
which contradicts to the fact $z_0 \in\mathcal E_1$ by  (\ref{m-eps}) and \eqref{E-1}! Hence, $\tilde M_X^1$ is a dense set of $M_X^1$.
Similarly, we can show that
$$\tilde M_X^2=\{x\in M_X^2|~ x=\lim_{t\to \infty} \exp(t\text{re}(-X))\cdot y,~{\rm for ~ some} ~y\in\mathcal O\}
$$
is a dense set of $M_X^2$. {Part (a)} of the proposition is proved.

To prove Part (b) in  the proposition, we write $\omega_i$ as
$\omega_i=\omega+\sqrt{-1}\partial\bar\partial \varphi_i$ for some
$\varphi_i$. Then
$$i_X(\omega_i)=\sqrt{-1}\partial\bar\partial f_i,$$
where $f_i=f +X(\varphi_i)$.
It follows that
\begin{align}\label{x-relation}X= g^{k\bar l}(\omega_i) (f +X(\varphi_i))_{\bar l}\frac{\partial}{\partial z_k}.
\end{align}
Thus
$$ M_X=\{x\in M|~\nabla f(x)=0\}= \{x\in M|~\nabla f_i(x)=0\}. $$
Hence, $\max_M f_i= \max_M f$ and $\min_M f_i= \min_M f$ by (\ref{x-relation}). In particular,
\begin{align}\label{osc}{\rm osc}_M f_i= {\rm osc}_M f.
\end{align}

Let $h_i$ be the Ricci potential of $\omega_i$ which is normalized by
\begin{align}\label{normalization-h} \int_M e^{h_i}\omega_i^n=(2\pi c_1(M))^n.
\end{align}
Note that $\omega_i$ is $K\times K$-invariant. Then    by adding a constant $f_i$ satisfies the following  equation (cf. \cite{Fu, TZ5}),
\begin{align}\label{fi-equation}\Delta_i f_i +f_i +\langle\partial f_i, \partial h_i\rangle=\Delta_i f_i +f_i +\frac{1}{2}\langle\nabla f_i, \nabla h_i\rangle=0,
\end{align}
where $\bigtriangleup_i$ is the Laplace operator associated to $\omega_i$.
Since $h_i$ satisfies equation $\Delta_i h_i=R_i -n$, where $R_i$ is the scalar curvature of $\omega_i$,
$h_i$ is {$C^k$-uniformly} bounded for any $k$.
Thus $f_i$ is {$C^k$-uniformly} bounded associated to the metric $\omega_i$ for any $k$, and so $f_i$ converges subsequently to a smooth function $f_\infty$
on $M_\infty$. We need to show that
\begin{align}\label{osc-2}{\rm osc}_{M_\infty} f_\infty= {\rm osc}_M f.
\end{align}

\textbf{Case 1)}: There is a uniform constant $\delta_0$ such that
$$ {\rm dist}_{\omega_i}(M_X^1, M_X^2)\ge \delta_0,~\forall ~i.$$
Then there are Gromov-Hausdorff limits $ M_{\infty,X}^1$ and $ M_{\infty,X}^2$ of $M_X^1$ and $M_X^2$  in  $M_\infty$,  respectively,
such that $M_{\infty,X}^1\cap M_{\infty,X}^2=\emptyset$. Thus there are neighborhoods $B_1$ and $B_2$ of $M_{\infty,X}^1$ and $ M_{\infty,X}^2$ in $ M_{\infty}$, respectively, such that $B_1\cap B_2=\emptyset$. By  the convergence of $f_i$,  we have
\begin{align}\label{max-constnat}\max_{M_\infty} f_\infty=\sup_{B_1} f_\infty= \max_M f=f_\infty(x),~\forall~x\in M_{\infty,X}^1,
\end{align}
and
\begin{align}\label{min-constnat} \min_{M_\infty} f_\infty=\inf_{B_2} f_\infty= \min_M f =f_\infty(x),~\forall~x\in M_{\infty,X}^2.
\end{align}
Hence, we get (\ref{osc-2}).

\textbf{Case 2)}:
$$ {\rm dist}_{\omega_i}(M_X^1, M_X^2)\to 0,~{\rm as}~i\to +\infty.$$
This means that
$$M_{\infty,X}^1\cap  M_{\infty,X}^2\neq \emptyset.$$
Thus there is a point $p\in M_{\infty,X}^1\cap  M_{\infty,X}^2$ and two sequences of points $\{p_i\}$ and  $\{q_i\}$ in $M_X^1$ and  $M_X^2$, respectively, such that
$$p_i \to p~{\rm and}~  q_i\to p.$$
As a consequence, there is an open set $U_p$ around $p$ in $M_\infty$  and   two sequences  of open sets $U_{p_i}$ and  $U_{q_i}$ around $p_i$ and $q_i$ in $M$,  respectively, such that
$$U_{p_i}\to U_p~{\rm and}~U_{q_i}\to U_p$$
in the  topology of Gromov-Hausdorff.
By  the convergence of $f_i$,  we get
$$ f_\infty(p)\equiv\min_M f~{\rm and}~f_\infty(p)\equiv\max_M f.$$
This is impossible!
 Hence,  Case 2) is impossible and we prove (\ref{osc-2}) by Case 1).

By (\ref{osc-2}), we see that $\nabla f_\infty\neq 0$.
Moreover, by (\ref{fi-equation}), $f_\infty$ satisfies
\begin{align}\label{f-infty}
\Delta_\infty f_\infty +f_\infty +\langle\partial f_\infty, \partial h_\infty\rangle=0,
\end{align}
where $\Delta_\infty$ is the Laplace operator associated to $\omega_\infty$. Thus $g^{k\bar l}(\omega_\infty) (f_\infty)_{\bar l}\frac{\partial}{\partial z_k}$
defines a holomorphic vector field $X_\infty$ on $M_\infty$. On the other hand, by  the convergence of $\omega_i$, there are diffeomorphisms $F_i: M_\infty\to M$ such that
$$F_i^* \omega_i     \stackrel{C^\infty}{\longrightarrow} \omega_\infty,  ~{\rm on}~ M_\infty.$$
Since $\nabla f_i$ is $C^k$-uniformly bounded associated to $\omega_i$, we also get
 \begin{align}\label{convergence-holo-vfs}(F_i^{-1})_* (g^{k\bar l}(\omega_i) (f_i)_{\bar l}\frac{\partial}{\partial z_k})\stackrel{C^\infty}{\longrightarrow} g^{k\bar l}(\omega_\infty) (f_\infty)_{\bar l}\frac{\partial}{\partial z_k}, ~{\rm on}~ M_\infty,
 \end{align}
which is the limit of $(M, X, \omega_i)$ in the sense of Definition \ref{limit-vector}.

 By Case 1),  it is clear that
$ M_{\infty,X}^1\cap M_{\infty,X}^2=\emptyset $. Furthermore,
$$\nabla f_\infty\equiv 0, ~ \forall x\in M_{\infty,X} = M_{\infty, X}^1\cup M_{\infty,X}^2. $$
As a consequence,
$$ M_{\infty,X}\subset M_{X_\infty}.$$
Part (b) of Proposition \ref{vector-limit} is proved.

\end{proof}

\section{Deformation of  $G$-structures}

In this section, we prove  the following proposition on preservation of $G$-structures on limits in the Cheeger-Gromov topology.

\begin{prop}\label{g-structure} Let $(M_\infty, \omega_\infty, J_\infty)$ be a smooth limit of   $K\times K$-invariant  metrics $(M, \omega_i, J)$ in
$2\pi c_1(M)$ on a Fano $G$-manifold $M$ in the Cheeger-Gromov topology as in Proposition \ref{vector-limit}.
Then $(M_\infty, \omega_\infty)$ is also a Fano $G$-manifold.
\end{prop}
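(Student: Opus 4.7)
The plan is to construct a holomorphic $G\times G$-action on $M_\infty$ by transporting the infinitesimal action from $M$ via limits of holomorphic vector fields, integrating to a global action, and then checking the open-orbit and Fano properties.

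First I would fix a real basis $\{E_1,\ldots,E_n\}$ of $\mathfrak g$ as in Section 2, viewing each element as a right-invariant holomorphic vector field on $M$. Applying Proposition \ref{vector-limit} to each basis element and passing to a diagonal subsequence (possible because the basis is finite), I obtain along a single Cheeger-Gromov subsequence a collection of non-trivial holomorphic vector fields $E_1^\infty,\ldots,E_n^\infty$ on $(M_\infty,J_\infty)$. The same procedure applied to the left-invariant vector fields on $M$ produces a second collection of holomorphic vector fields commuting with the $E_j^\infty$.

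Next I would verify that the assignment $\Phi:\mathfrak g\oplus\mathfrak g\to\mathrm{Hol}(M_\infty,J_\infty)$ sending each generator to its limit is an injective Lie algebra homomorphism. Linearity is immediate from the fact that the defining relation $i_X\omega_i=\sqrt{-1}\partial\bar\partial f_i$ is linear in $X$, and bracket relations pass to the limit because $\Psi_i^*J$ converges to $J_\infty$ in $C^\infty$ so the smooth Lie bracket is continuous. For injectivity, any nonzero $X\in\mathfrak g$ has a non-constant potential $f$ on $M$, and by the oscillation identity $\mathrm{osc}_{M_\infty}f_\infty=\mathrm{osc}_M f$ established in the proof of Proposition \ref{vector-limit}, the limit potential $f_\infty$ on $M_\infty$ is also non-constant, forcing $X_\infty\ne 0$. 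Integrating $\Phi$ on the compact manifold $M_\infty$ then produces a holomorphic action of $G\times G$.

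To locate the open orbit I would push the dense open orbit $\mathcal O\subset M$ forward under the Cheeger-Gromov diffeomorphisms $\Psi_i$; since the $E_j^\infty$ already span the full real tangent space on the limiting image of $\mathcal O$ by $C^\infty$ convergence, the $G\times G$-orbit through any such limit point is open. Connectedness then forces a unique open dense orbit $\mathcal O_\infty$, and a dimension count for the stabilizer of a generic point identifies $\mathcal O_\infty$ with $G$ as a $G\times G$-homogeneous space. For the Fano property, the Ricci potentials $h_i$ are $C^k$-bounded via the normalization $\int_M e^{h_i}\omega_i^n=(2\pi c_1(M))^n$, so they subsequentially converge to a smooth limit $h_\infty$ satisfying $\mathrm{Ric}(\omega_\infty)-\omega_\infty=\sqrt{-1}\partial\bar\partial h_\infty$; hence $[\omega_\infty]=2\pi c_1(M_\infty,J_\infty)$, and positivity of $\omega_\infty$ gives positivity of $c_1$.

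The main obstacle I anticipate is identifying the stabilizer of $\mathcal O_\infty$ with $\mathrm{diag}(G)$, which is what ensures the open orbit is genuinely isomorphic to $G$ rather than to a proper quotient or a different homogeneous space for the same group. Ruling out jumps in the stabilizer requires nondegeneracy of the limit vector fields at generic points together with preservation of the complex structure in the limit, which is precisely where one invokes the $K\times K$-invariance (as in Lemma \ref{nondegenerate-vector}) and the semisimplicity input from Proposition \ref{bi-holo}.
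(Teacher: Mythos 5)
Your overall skeleton (limit the infinitesimal $\mathfrak g\oplus\mathfrak g$-action via Proposition \ref{vector-limit}, integrate, then check the open orbit and the Fano class) points in the right direction, but the two steps where you wave your hands are exactly where the paper's proof does all of its work, and as written they do not go through. First, the assertion that ``the $E_j^\infty$ already span the full real tangent space on the limiting image of $\mathcal O$ by $C^\infty$ convergence'' is unjustified: spanning at points of $\mathcal O$ for each fixed $i$ does not survive the limit, because the Gram matrix $\omega_i(e_\alpha,e_\beta)$ can degenerate as $i\to\infty$ even at points staying a definite distance from $M\setminus\mathcal O$. Preventing this degeneration is precisely the content of Lemma \ref{nondegenerate-vector}, which is not a soft consequence of convergence: it uses the $K\times K$-invariance, a Weyl-group averaging to produce $K\times K$-invariant comparison metrics $\tilde\omega_i$, and Delcroix's Hessian formula (Lemma \ref{Hessian}) to obtain the uniform two-sided bound (\ref{psi-ab}) on $\alpha(\nabla\tilde\psi^i)\coth\alpha$ near the Weyl walls, so that the root-direction fields $\hat e^i_{\pm\alpha}$ stay uniformly non-degenerate. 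You acknowledge this issue only in your closing paragraph, but you offer no argument, so the openness of the limit orbit is not established.

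Second, identifying the open orbit with $G$ is not a ``dimension count'': counting dimensions only shows the stabilizer $\Gamma$ of a generic limit point is discrete (that is Corollary \ref{local-open}), and a priori $\Gamma$ could be infinite discrete or a nontrivial finite group, in which case the open orbit would be a proper quotient $G/\Gamma$. The paper closes these gaps by two further arguments you are missing: it rules out infinite $\Gamma$ using the $KAK$ decomposition together with \textbf{Claim 2} and Lemma \ref{non-zero-vector} (unbounded torus elements push any compact subset of $\hat{\mathcal O}_\infty$ uniformly toward the boundary divisor $D\hat M_\infty$, contradicting $g_l\cdot\hat x_{0,\infty}=\hat x_{0,\infty}$), and it rules out a nontrivial finite $\Gamma$ by a volume comparison: if $\#\Gamma=N_0\ge 2$ one would get $\mathrm{vol}_{\hat\omega_i}(\Phi_i(M))\ge N_0\,\mathrm{vol}_{\hat\omega_\infty}(\hat M_\infty)-(2N_0+2)\epsilon$, contradicting (\ref{n0-1}). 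Both arguments live on the uniform projective embeddings $\Phi_i$ furnished by the partial $C^0$-estimate, which your proposal discards; the embeddings are also how the paper obtains the $G\times G$-action itself (as limits of projective transformations in $PGL(N+1,\mathbb C)$), thereby avoiding the covering issue in your ``integrate the Lie algebra homomorphism'' step, which as stated only yields an action of the universal cover of $G\times G$. Finally, invoking Proposition \ref{bi-holo} for the stabilizer is out of place: Proposition \ref{g-structure} is proved for reductive $G$ without semisimplicity and without Proposition \ref{bi-holo}, which concerns only the identification of the limiting complex structure, not the freeness of the action.
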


We use the Kodaira embedding to prove the proposition. This means that there exists an integer $m$ such that $M$ can be embedded into $\mathbb CP^N$ by a unitary orthogonal   basis  $\{s_A^i, ~i=1,..., N+1\}$ of holomorphic sections of $K_M^{-m}$  with  $L^2$-norm  induced by
$\omega_i$. Note that $M_\infty$ is diffeomorphic to $M$. Thus by the convergence of $\omega_i$, we see that
there is a uniform integer $N$ independent of $i$ such that the following properties are satisfied:

1) There is a holomorphic embedding $\Phi_i$ from $M$ to $\mathbb CP^N$ for each $(M,\omega_i)$.

2) There is a holomorphic embedding $\Phi_\infty$ from $M_\infty$ to $\mathbb CP^N$ for the K\"ahler manifolds $(M_\infty, \omega_\infty)$.

3) The image $\Phi_i(M)=\hat M_i$ converges to the image $\Phi_\infty(M_\infty)=\hat M_\infty$ according to the topology of complex submanifolds.

The above properties 1)-3) come from the partial $C^0$-estimate as in \cite{DS, Ti13a, T2}  (or simply a variant of Tian's almost isometry theorem \cite{Tian89J}) for a sequence of K\"ahler-Einstein metrics or conical  K\"ahler-Einstein metrics.  We note that the curvature of $\omega_i$ in Proposition \ref{g-structure} is uniformly bounded and  the sequence $\{\omega_i\}$ can be regarded as  a special case in their papers. As a consequence, the norm of sections $s_A^i$ with respect to $\omega_i$ as  functions on $M_\infty$ is uniformly $C^\infty$-bounded and so the basis $\{s_A^i\}$ is $C^\infty$-convergent to a basis of $\{s_A^\infty\}$ on  $H^0(M_\infty, K_{M_\infty}^{-m})$.  Thus by 3),  we can choose   a covering $\{U_\alpha\}$  of $ \hat M_\infty$  with local  holomorphic coordinates and   diffeomorphisms   $F_{i}: \hat M_\infty\to \hat M_i$ such that for each $\hat M_i$ there is a  covering  $\{U_\alpha^i\subset F_{i}(U_\alpha)\}$   with local  holomorphic coordinates and uniform norms of transformation functions. Moreover,
\begin{align}\label{hat-omega} F_i^* \hat\omega_i     \stackrel{C^\infty}{\longrightarrow} \hat\omega_\infty,
\end{align}
 where   $\hat\omega_i\,=\, \frac{1}{m} \omega_{FS}|_{ \hat M_i }$ and  $\hat\omega_\infty\,=\, \frac{1}{m} \omega_{FS}|_{ \hat M_\infty }$.

 In the following, we  compare $\omega_i$ with the induced  metric
 $\hat \omega_i$.
 Write
$$(\Phi_i^{-1})^*\omega_i = \hat\omega_i+\sqrt{-1} \partial\bar\partial\varphi_i$$
 for some K\"ahler potential $\varphi_i$ in  $\hat M_i$.
Then by using  the  regularity theory  of   complex Monge-Amp\`ere equation, we prove

\begin{lem}\label{metric-equiv}
There is a uniform constant $A>0$ such that
\begin{align}\label{induced-metrics}A^{-1} (\Phi_i^{-1})^*\omega_i \le  \hat \omega_i\le A  (\Phi_i^{-1})^*\omega_i,~{\rm on}~\hat M_i.
\end{align}
Moreover,  for any integer $k>0$ it holds
\begin{align}\label{higher-regularity}
\|\varphi_i \|_{C^{k}(\hat M_i)}\le     A_k,
\end{align}
where $A_k$ is a uniform constant independent of $i$.

\end{lem}

\begin{proof}
Let  $h_i$ and $\hat  h_i$ be the Ricci potentials  of  $\omega_i$ and $\tilde\omega_{i}$, respectively.   Then by the  convergence of $\omega_i$ and $\hat \omega_{i}$,  both of $h_i$ and $\tilde h_i$ are uniformly bounded.
   Moreover,  $\varphi_i$ (maybe  different by a constant)  satisfies
the following  complex Monge-Amp\`ere equation,
\begin{align}\label{potenial-equation}
(\hat \omega_{i}+\sqrt{-1}\partial\bar\partial \varphi_i)^n=
e^{-\varphi_i+\hat h_i-h_i}\hat \omega_{i}^n, ~{\rm in}~ \hat M_i.
\end{align}
By   the partial $C^0$-estimate and gradient estimate of $\{s_\alpha^i\}$ (cf. \cite{T2}), we know  that
$$  |\varphi_i|\le C~{\rm and}~\hat \omega_i\le A  (\Phi_i^{-1})^*\omega_i.$$
Thus  by  (\ref{potenial-equation}) we also get
$$A^{-1} (\Phi_i^{-1})^*\omega_i \le \hat \omega_i$$
possibly by choosing a  bigger $A$.  Hence, (\ref{induced-metrics}) is true.

Note that
 $$\Delta_{\omega_{i}}h_i=R_{i}-n, ~{\rm in}~ M, $$
 where $R_i$ is the scalar  curvature of $\omega_i$, which is uniformly bounded.
 By  (\ref{induced-metrics}), we have
 $$|\Delta_{\hat \omega_{i}} h_i|\leq C,~{\rm in}~\hat M_i.$$
 It follows that
 \begin{align}
\| h_i\|_{C^{1,\alpha}(\hat M_i)} \le C_1. \notag
\end{align}
Hence, the regularity theory of (\ref{potenial-equation})  (cf. \cite{Wang})  implies that
$$\|\varphi_i \|_{C^{3,\alpha}(\hat M_i)}\le C_3.$$
Repeating the above argument,  we will  get (\ref{higher-regularity}).

\end{proof}

By  (\ref{hat-omega}) together with Lemma \ref{metric-equiv}, we  get
\begin{align}\label{f-maps} F_i^* ((\Phi_i^{-1})^*\omega_i)     \stackrel{C^\infty}{\longrightarrow} (\Phi_\infty^{-1})^*\omega_\infty.
\end{align}
Let $\{E_1,...,E_n\}$ be a basis of the Lie algebra $\mathfrak g$.  Then  the left (right)  action of $G$ induces a  space  ${\rm span}\{ e_1,..., e_n\}$ of holomorphic vector fields with ${\rm im}(e_a)\in\mathfrak k$ on $ M$,  and so the holomorphism $\Phi_i$ induces  a  space  ${\rm span}\{\hat e_1^i,...,\hat e_n^i\}$ of holomorphic vector fields on $\hat M_i$.
Since by the Part (b) of Proposition \ref{vector-limit}, for each $a$, $( e_a, \omega_i)$ converges to a holomorphic vector field $( e_{a}^{\infty}, \omega_\infty)$ on $M_\infty$, $(\hat e_a^i, (\Phi_i^{-1})^*\omega_i)$ converges to a holomorphic vector field $( \hat e_{a}^{\infty}, (\Phi_\infty^{-1})^* \omega_\infty)$ on $ \hat M_\infty$.  In fact, by (\ref{f-maps}),
\begin{align}\label{induced-vfs} (F_i^{-1})_*  \hat e_{a}    \stackrel{C^\infty}{\longrightarrow} \hat e_{a}^{\infty}= (\Phi_\infty)_*e_{a}^{\infty},~\forall ~a=1, ..., n.
\end{align}
Hence, by Remark \ref{equivalent-convergence}, it follows that
$( \hat e_a^i,  \hat\omega_i)$ converges to a holomorphic vector field $( \hat e_{\alpha}^{\infty}, \hat \omega_\infty)$ on $\hat M_\infty$.
As a consequence, for each $\hat e_a^i$, the holomorphic
 coefficients of  $\hat e_a^i$ are uniformly bounded under
local holomorphic coordinates on $U_\alpha^i$.

 For any $g\in G$, the map $\Phi_i$ induces a left (right) action on
 $$H^0(M, K_M^{-m})={\rm span}\{s_A^i,~i=1,..., N+1\},$$
thus $G$ can be regarded as a subgroup of ${\rm PGL}(N+1, \mathbb C)$ induced by the map $\Phi_i$.
Those subgroups induced by different $\phi_i$ are conjugate to each other by automorphisms induced by $\sigma_{ij}^{-1}\cdot G\cdot\sigma_{ij}$,
where $\sigma_{ij} \in  {\rm PGL}(N+1, \mathbb C)$ induced by the Kodaira embeddings $\phi_i$ and $\phi_j$.  Without confusion, we still denote by $G$ each of such subgroups which may vary on $\phi_i$.
Furthermore, any one-parameter subgroup $\sigma_t$ generated by ${\rm Im}( e_a^i)$ induces a family of isomorphisms on $H^0(M, K_M^{-m})$. Taking the derivative on $t$,  we get a lifting holomorphic vector field of $\hat e_a^i$   on  $\mathbb CP^N$.

For any $\hat x_\infty\in \hat M_\infty$ and any sequence $\{\hat x_i\}$ such that
$$\hat x_i\in\hat M_i\text{ and }\hat x_i\to\hat x_\infty,$$
we  define
\begin{eqnarray}\label{action def}
g(\hat x_\infty)=\lim_{i\to\infty} g(\hat x_i)\in \hat M_\infty,~\forall ~g\in G.
\end{eqnarray}
Using the convergence of holomorphic vector fields $( \hat e_a^i,  \hat\omega_i)$ on $M$, one can easily show that the limit $g(\hat x_\infty)$ is independent of the choice of $\{\hat x_i\}$.  Moreover, we have
\begin{eqnarray}\label{communictae}
g'(g(\hat x_\infty))= (g'\cdot g)(\hat x_\infty),~\forall ~g, g'\in G.
\end{eqnarray}
Thus,  by (\ref{action def}) we  define a left $G$-action on $\hat M_\infty$, which induces the one
on $ M_\infty$ by the relation $\Phi_\infty \cdot g= g\cdot\Phi_\infty$ through the holomorphism $\Phi_\infty$.
Similarly, we can define a right $G$-action on $M_\infty$ and so get a $G\times G$-action
on $ M_\infty$.

The following lemma shows that $G$  acts on $\hat M_\infty$ effectively. Namely,  $\{\hat e_{1}^{\infty},..., \hat e_{n}^{\infty} \}$ becomes a basis of Lie algebra of $G$ acting on $\hat M_\infty$.

\begin{lem}\label{vector-independent} $\hat e_{1}^{\infty}, ..., \hat e_{n}^{\infty}$ are all linearly  independent on $\hat M_\infty$.
\end{lem}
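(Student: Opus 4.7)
I would argue by contradiction. Suppose there exist $c_1,\dots,c_n\in\mathbb{C}$, not all zero, with $\sum_\alpha c_\alpha\hat{e}_\alpha^\infty\equiv 0$ on $\hat{M}_\infty$. Via the $G$-equivariant embedding $\Phi_\infty$ this is equivalent to $X_\infty:=\sum_\alpha c_\alpha e_\alpha^\infty\equiv 0$ on $M_\infty$. The corresponding vector field $X:=\sum_\alpha c_\alpha e_\alpha$ on $M$ is however nontrivial, because $\{E_1,\dots,E_n\}$ is a $\mathbb{C}$-basis of $\mathfrak{g}$ and the $G$-action is free on the open dense orbit $\mathcal{O}$; in particular the values $e_1(x_0),\dots,e_n(x_0)$ form a $\mathbb{C}$-basis of $T_{x_0}M$ at every $x_0\in\mathcal{O}$.

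Writing $c_\alpha=a_\alpha+\sqrt{-1}b_\alpha$ and setting $Y_1=\sum_\alpha a_\alpha E_\alpha$, $Y_2=\sum_\alpha b_\alpha E_\alpha$, both $Y_j$ lie in $\mathfrak{k}$, so $\mathrm{im}(Y_j)\in\mathfrak{k}$ and Proposition \ref{vector-limit}(b) applies to each. When $Y_j\ne 0$, it yields a limit $Y_{j,\infty}\not\equiv 0$ on $M_\infty$, since by (\ref{osc-2}) its real potential has oscillation equal to $\mathrm{osc}_M f_j>0$. However, the individual non-triviality of $Y_{1,\infty}$ and $Y_{2,\infty}$ does not by itself exclude a cancellation in $X_\infty=Y_{1,\infty}+\sqrt{-1}Y_{2,\infty}$, so a pointwise $\mathbb{C}$-linear independence argument is still required to finish.

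The main obstacle is precisely to rule out this cancellation, and I would handle it by exploiting the $K\times K$-invariance. Via an equivariant version of Cheeger--Gromov compactness, the diffeomorphisms $\Psi_i$ may be chosen $K\times K$-equivariant; then by Arzel\`a--Ascoli, after passing to a subsequence, $\Psi_i\to\Psi_\infty$ in $C^\infty$ with $\Psi_\infty$ a diffeomorphism of $M$ satisfying $\Psi_\infty^*J=J_\infty$. Hence $\Psi_\infty\colon(M,J_\infty)\to(M,J)$ is a biholomorphism, so for any $x_0\in\mathcal{O}$ the differential $d\Psi_\infty^{-1}|_{x_0}\colon(T_{x_0}M,J)\to(T_{\Psi_\infty^{-1}(x_0)}M,J_\infty)$ is a $\mathbb{C}$-linear isomorphism. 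Setting $x_\infty=\Psi_\infty^{-1}(x_0)$, the images $e_\alpha^\infty(x_\infty)=d\Psi_\infty^{-1}|_{x_0}(e_\alpha(x_0))$ therefore form a $\mathbb{C}$-basis of $T_{x_\infty}M_\infty$, which contradicts the vanishing $X_\infty(x_\infty)=\sum_\alpha c_\alpha e_\alpha^\infty(x_\infty)=0$ and completes the proof.
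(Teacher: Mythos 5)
Your opening moves --- arguing by contradiction, transferring the relation $\sum_\alpha c_\alpha \hat e_\alpha^\infty\equiv 0$ to $M_\infty$ via $\Phi_\infty$, and trying to feed the combination into Proposition \ref{vector-limit}(b) --- are in the spirit of the paper, which simply applies Proposition \ref{vector-limit}(b) to the single vector field $\sum_\alpha a_\alpha e_\alpha$: its limit is a \emph{nontrivial} holomorphic vector field because the oscillation of its potential is preserved in the limit (see \eqref{osc-2}), and this alone contradicts the assumed vanishing. Your remark that nontriviality of $Y_{1,\infty}$ and $Y_{2,\infty}$ separately does not exclude a cancellation in $Y_{1,\infty}+\sqrt{-1}\,Y_{2,\infty}$ is a fair observation about the brevity of that step, but the way you then close the argument does not work.

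The last step is a genuine gap. In Cheeger--Gromov convergence the diffeomorphisms $\Psi_i$ carry no uniform $C^1$ (let alone $C^\infty$) bounds, so Arzel\`a--Ascoli gives no convergent subsequence; in precisely the situation of interest (a Fano $G$-manifold without K\"ahler--Einstein metrics) the $\Psi_i$ are expected to diverge in the diffeomorphism group, and this is exactly the mechanism by which the complex structure can jump in the limit. If your claim ``$\Psi_i\to\Psi_\infty$ in $C^\infty$ with $\Psi_\infty^*J=J_\infty$'' were available, then $(M_\infty,J_\infty)$ would be biholomorphic to $(M,J)$ with no further work, which would make Proposition \ref{bi-holo} --- whose proof requires the semisimplicity of $G$ and a careful analysis of invariant complex structures on the toric submanifold --- and indeed much of the proof of Proposition \ref{g-structure} superfluous; so you are assuming the hardest conclusion of the paper at this point. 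Choosing the $\Psi_i$ to be $K\times K$-equivariant does not help, since equivariance provides no equicontinuity. The correct way to finish is the potential-theoretic one: the nonvanishing of the limit field comes from the uniform $C^k$-bounds on the potentials $f_i$ (from equation \eqref{fi-equation}) together with the preservation of their oscillation \eqref{osc-2}, applied to the relevant combination itself, as in the paper's proof --- not from any convergence of the diffeomorphisms $\Psi_i$.
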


\begin{proof} It suffices to prove that $e_{1}^{\infty}, ...,  e_{n}^{\infty}$ are all  linearly independent $ M_\infty$.
In fact, if
\begin{align}\label{independent} \sum_\alpha a_\alpha  e_{\alpha}^{\infty}\equiv 0,~{\rm for~some}~a_\alpha\neq 0,
\end{align}
then,  by the Part (b) of Proposition \ref{vector-limit}, the vector field $\sum_\alpha a_\alpha  e_{\alpha}^i$
converges to a nontrivial holomorphic vector field, which should be $ \sum_\alpha a_\alpha  e_{\alpha}^{\infty}$ on $ M_\infty$. This is a contradiction with (\ref{independent}).

\end{proof}

Let $\mathcal O$ be an open dense $ G$-orbit in $M$. Since $M$ has finitely many $G\times G$-orbits \cite{AB1,AB2},
there are basis points $x_\delta\in M\backslash\mathcal O$, $\delta=1,...,k$, such that
\begin{align}\label{limit equivariant} M= \mathcal O \cup_{\delta} (G\times G)x_\delta.
\end{align}
Note that the closure of each { $G\times G$-orbit $(G\times G)x_\delta$ } is a smooth algebraic variety whose dimension is less than $n$. Then up to a subsequence, the closure of $\Phi_i((G\times G)x_\delta)$ converges to an algebraic limit in $\mathbb CP^N$. As a consequence,
$\Phi_i(M\backslash\mathcal O)$ has an algebraic limit $D \hat M_\infty$ in  $\hat M_\infty\subset \mathbb CP^N$.

For any $i$ and $g\in G\times G$,  we have
\begin{eqnarray}\label{equivariant}
g\circ\Phi_i=\Phi_i\circ g.
\end{eqnarray}
Then by (\ref{action def}) and (\ref{limit equivariant}), for any $\hat x_{\infty}\in \hat M_\infty$ there is a sequence of $g_i \in G\times G$ such that
\begin{align}\label{limit-sequence}
{\hat x_{\infty}=\lim_i g_i\cdot \Phi_i( x_0), ~{\rm or}~ \hat x_{\infty}=\lim_i g_i\cdot \Phi_i( x_\delta),~{\rm for ~some}~\delta\in \{1,...,k\},}
\end{align}
where $x_0 \in \mathcal O$. We define an open set in $\hat M_\infty$ by $\hat{\mathcal O}_\infty=\hat M_\infty\setminus D \hat M_\infty$. Thus  for any $\hat x_{0,\infty}\in \hat{\mathcal O}_\infty $, there exists a $\delta_0>0$ such that
$${\rm dist} ( \hat x_{0,\infty}, D\hat M_\infty
)\ge 2\delta_0>0.$$
It follows that there are $\hat x_{0,i} \in \hat M_i$ such that
${\rm dist} ( \hat x_{0,i}, \Phi_i(M\backslash\mathcal O))\ge \delta_0>0.$

Since $G$ induces an action on $H^0(M,K_M^{-m})$ for each $i$ through an orthonormal basis corresponding to $\phi_i$, by taking limit as in
(\ref{action def}), $G$ induces an action on $H^0(M_\infty, K_{M_\infty}^{-m})={\rm span}\{s_1^\infty,..., s_{N+1}^\infty\}$. It follows that each holomorphic vector field $\hat e_{a}^{\infty}$, where $a=1,\cdots,n$, can be lifted to a vector field on $\mathbb CP^N$ as $\hat e_{a}^{i}$ does.  Namely, $\{ \hat e_1^\infty, .., \hat e_n^\infty\} $  can be induced by a basis of the Lie algebra of $G$ which acts on  $\hat M_\infty$.
In particular, a maximal $r$-dimensional torus subgroup $T^\mathbb C$  of $G$ acting on $\hat M_\infty$, which generated by  a basis $\{X_1,...,X_r\}$ of {$\mathfrak a$},  induces an $r$-dimensional torus subgroup  $\tilde T^\mathbb C$ of  ${\rm PSL}(N+1, \mathbb C)$ generated by an $r$-dimensional torus vector fields on  $\mathbb CP^N$.

Let $\tilde W_1,...,\tilde W_{N+1}$ be the $(N+1)$ hyperplanes in $\mathbb CP^N$ where $\tilde T^\mathbb C$ does not acts freely.
Then  for any induced holomorphic vector field $\tilde X$ of $X$ in $\mathfrak t^{\mathbb C}$ on $\hat M_\infty$, it holds
\begin{align}\label{zero-set-x}\{\hat x\in \hat M_\infty|~ \tilde X(\hat x)=0\}\subset \cup_{\alpha}\tilde W_\alpha.
\end{align}
Set
\begin{align}\label{open-g-orbit}\hat{\mathcal O}_\infty^0=\hat{\mathcal O}_\infty\backslash(\cup_\alpha\tilde W_\alpha).
\end{align}
To prove that $(M_\infty,J_\infty)$ is a $G$-manifold, it suffices to show that $\hat{\mathcal O}_\infty^0$ is isomorphic to $G$. Without loss of generality, we may assume that $ {\hat x}_{0,\infty}\in \hat{\mathcal O}_\infty^0$ above.  Thus  it reduces to proving  that $G$ acts on $\hat x_{0,\infty}$ freely and $G\cdot \hat x_{0,\infty}=\hat{\mathcal O}_\infty^0$.

The following key lemma shows that any holomorphic vector field induced by $G$ is non-degenerate on $\hat{\mathcal O}_\infty^0$.

\begin{lem}\label{nondegenerate-vector} For any induced holomorphic vector field $\tilde X$ of $X\in \mathfrak g$ on $\hat M_\infty$, it holds
\begin{align}\label{non-zero} \tilde X(\hat x_\infty)\neq 0, ~ \forall ~\hat x_\infty\in \hat{\mathcal O}_\infty^0.
\end{align}
\end{lem}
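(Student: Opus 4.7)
I would split into two cases according to whether $X$ lies in the Cartan subalgebra $\mathfrak t^{\mathbb C}$ or not, and exploit $K\times K$-invariance to reduce the general case to a uniform local estimate near a point of $\exp(\overline{\mathfrak a_+})$.

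For $X \in \mathfrak t^{\mathbb C}$, the induced vector field $\tilde X$ on $\hat M_\infty \subset \mathbb{CP}^N$ is generated by a one-parameter subgroup of $T^{\mathbb C} \subset \tilde T^{\mathbb C}$; any zero of $\tilde X$ is then a fixed point of $\exp(tX)$ and therefore lies in the non-free locus $\cup_\alpha \tilde W_\alpha$ of the ambient torus, which is disjoint from $\hat{\mathcal O}_\infty^0$ by its very definition. This settles the torus case.

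For general $X \in \mathfrak g \setminus \{0\}$, I would argue by contradiction: assume $\tilde X(\hat x_\infty) = 0$ for some $\hat x_\infty \in \hat{\mathcal O}_\infty^0$. Writing $\hat x_\infty = \lim_i \Phi_i(y_i)$ as in (\ref{distance-boundary}) with $y_i \in \mathcal O$ bounded away from $M\setminus \mathcal O$, I would apply the KAK decomposition $y_i = k_{1,i}\exp(x_i) k_{2,i}$ with $k_{j,i}\in K$ and $x_i \in \overline{\mathfrak a_+}$. Up to a subsequence, $k_{j,i} \to k_j^\infty$ in the compact group $K$ and $x_i \to x_\infty \in \overline{\mathfrak a_+}$ with $x_\infty$ bounded; the assumption $\hat x_\infty \notin \cup_\alpha \tilde W_\alpha$ further forces $x_\infty$ to stay a positive distance from every Weyl wall $W_\alpha$, through the correspondence between the ambient torus-fixed loci $\tilde W_\alpha$ in the projective embedding and the walls $W_\alpha$ on the Lie-algebra side. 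By the $K\times K$-invariance of $\omega_i$, $|e_X|_{\omega_i}(y_i) = |e_{X'}|_{\omega_i}(\exp(x_i))$ for $X' = \mathrm{Ad}(k_{1,i}^{-1})X \in \mathfrak g$, which can be evaluated in Delcroix's local coordinates using the block-diagonal Hessian (\ref{+21}) of Lemma \ref{Hessian}. Each block is bounded below by the real Hessian $\mathrm{Hess}_{\mathbb R}(\psi_i)(x_\infty)$ and by factors of $\coth\alpha(x_\infty)$, both of which stay uniformly positive thanks to the $C^k$-boundedness of the associated potentials $\psi_i$ (already used in the proof of Proposition \ref{vector-limit}) and the distance of $x_\infty$ from the Weyl walls. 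This yields a uniform lower bound $|e_X|_{\omega_i}(y_i) \geq c > 0$ independent of $i$; passing to the limit via (\ref{induced-metrics}) gives $|\tilde X|_{\hat\omega_\infty}(\hat x_\infty) > 0$, a contradiction.

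The main obstacle is establishing the uniform non-degeneracy of the block $\tfrac{1}{4}\mathrm{Hess}_{\mathbb R}(\psi_i)(x_i)$: one needs the associated functions $\psi_i$ to remain uniformly strictly convex on the compact subset of $\mathfrak a_+$ traced out by the $x_i$, bounded away from the Weyl walls. This should follow from the positivity of $\omega_i$ combined with the $C^k$-bounds inherited from Proposition \ref{vector-limit}, together with the identification between the exceptional hyperplanes $\tilde W_\alpha$ on the embedded side and the Weyl walls $W_\alpha$ in $\mathfrak a^*$ dictated by the $G$-structure of $M$.
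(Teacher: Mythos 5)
Your torus case is the same as the paper's (it is exactly (\ref{zero-set-x})), but the general case has two genuine gaps, and they sit precisely at the point you flag as ``the main obstacle''. First, the claim that $\hat x_\infty\notin\cup_\alpha\tilde W_\alpha$ forces the $KAK$-coordinates $x_i$ (or their limit $x_\infty$) to stay a positive distance from the Weyl walls is not justified and is in general false: the $\tilde W_\alpha$ are the non-free loci of the \emph{ambient} torus $\tilde T^{\mathbb C}$ in $\mathbb CP^N$, and a point of the open orbit whose $\overline{\mathfrak a_+}$-coordinate lies on a Weyl wall (e.g.\ the image of the identity of $G$) is acted on freely by $T^{\mathbb C}$, so it does not lie in $\cup_\alpha\tilde W_\alpha$. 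Consequently you cannot assume $\coth\alpha(x_i)$ stays bounded or that $\alpha(\nabla\psi^i)$ stays bounded away from $0$; the root blocks of (\ref{+21}) involve the product $\alpha(\nabla\psi^i)\coth\alpha$, which near a wall is a $0\cdot\infty$ cancellation that has to be controlled, not assumed away.

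Second, the needed uniform lower bound on $\mathrm{Hess}_{\mathbb R}(\psi^i)$ along the torus directions does not follow from ``positivity of $\omega_i$ combined with the $C^k$-bounds inherited from Proposition \ref{vector-limit}'': positivity is not uniform in $i$ (the metrics could a priori degenerate along torus directions as $i\to\infty$), and the $C^k$-bounds in Proposition \ref{vector-limit} are bounds on the vector-field potentials $f_i$, which give convergence and upper control but no lower bound on $\omega_i(e_a,e_b)$. The paper closes exactly this gap by recycling your torus case: since every nontrivial combination of the limit torus fields is nonvanishing on $\hat{\mathcal O}_\infty^0$ by (\ref{zero-set-x}), the limit Gram matrix $\omega_\infty(e_a^\infty,e_b^\infty)$, $a,b=1,\dots,r$, is positive definite on a ball around the point ((\ref{nondegerate-1})), and the smooth Cheeger--Gromov convergence (after pulling back by $\Psi_i$ and averaging over the Weyl group to restore $K\times K$-invariance) transfers this to the uniform two-sided bound (\ref{a0}) on $(\tilde\psi^i_{ab})$ for large $i$. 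The root blocks are then handled \emph{including} points on or near Weyl walls via the expansion $\alpha(\nabla\tilde\psi^i)(x'+t\alpha)=\alpha^a\alpha^b\tilde\psi^i_{ab}(x')|\alpha|^2t+O(t^2)$, which together with (\ref{a0}) and the bound on $|\nabla\tilde\psi^i|$ yields the two-sided estimate (\ref{psi-ab}) for $\alpha(\nabla\tilde\psi^i)\coth\alpha$; this is what makes the full frame of induced fields uniformly nondegenerate near the point and hence every nontrivial $\tilde X$ nonvanishing there. Without these two steps your contradiction argument does not produce the uniform lower bound $|e_X|_{\omega_i}(y_i)\ge c>0$.
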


\begin{proof}

Let $\{E_1,...,E_n\}$ be a basis of $\mathfrak g$ such that $X_1=JE_1,..., X_r=JE_r\in\mathfrak a$, and $E_{a'}=E_\alpha$ and $E_{a'+ \frac{n-r}{2} }=E_{-\alpha}$, $a'=r+1,..., \frac{n+r}{2}$, which satisfy $V_\alpha\oplus V_{-\alpha}=\text{span}\{E_\alpha,E_{-\alpha}\} $ as in (\ref{g-decomposition}),
where $V_\alpha$ are eigenvectors associated to the positive roots $\alpha\in\Phi_+$. Let  $\{ e_1,..., e_n\}$ be  holomorphic vector fields with ${\rm im}(e_a)\in\mathfrak k$ on $ M$ induced by $\{E_1,...,E_n\}$.
Then the induced holomorphic vector fields $\{\hat e_1^i,...,\hat e_n^i\}$ on $\hat M_i$ by $\Phi_i$ converge to a basis
$\{\hat e_{1}^{\infty},..., \hat e_{n}^{\infty} \}$ of holomorphic vector fields  on $\hat M_\infty$
as in (\ref{induced-vfs}).

Let
$$\mathcal O_\infty^0=\Phi_\infty^{-1}(\hat {\mathcal O}_\infty^0)\subset M_\infty.$$
Note that
$$e_{a}^{\infty}= (\Phi_\infty^{-1})_*\hat e_a^{\infty}, ~a=1,...,n.$$
Then (\ref{non-zero}) is equivalent to
 \begin{align}\label{non-zero-2}\bar X(x_\infty)\neq 0, ~ \forall ~ x_\infty\in {\mathcal O}_\infty^0,
 \end{align}
 where $\bar X\in {\rm span}\{e_{1}^{\infty}, ...,e_{n}^\infty \}$ is any nontrivial vector field.
  By (\ref{zero-set-x}),   we have already known  that    (\ref{non-zero-2})  holds   for any non-trivial $\bar X\in {\rm span}\{e_{1}^{\infty}, ...,e_{r}^\infty \}$.
More precisely, we have
\begin{align}\label{non-degenerate-vf}
\omega_{\infty}( \bar X, \bar X)(x_\infty)&=(\Phi_\infty^{-1})^*\omega_{\infty}( \tilde X, \tilde X)(\hat x_\infty)\notag\\
&\ge c\hat\omega_{\infty}( \tilde X, \tilde X) (\hat x_\infty)\neq 0,
\end{align}
where
$\tilde X=(\Phi_\infty)_*\bar X\in {\rm span}\{\hat e_{1}^{\infty}, ..., \hat e_{r}^\infty \}.$
In the following, we want to show that (\ref{non-degenerate-vf}) implies  (\ref{non-zero-2}).

 By  (\ref{induced-vfs}) and (\ref{non-degenerate-vf}),  there are  a constant  $c'>o$ and  a small ball $B_\delta(\hat x_{\infty})\subset \mathbb CP^{N}$ near $ \hat x_{\infty}$
 such that
\begin{align} (\hat\omega_i(  \hat e_{a}^i, \hat e_{b}^i))_{r\times r}\ge\frac{1}{2}  (\hat\omega_{\infty}( \hat e_{a}^\infty,
 \hat e_{b}^\infty))_{r\times r}\ge c' {\rm Id}, ~{\rm in}~ B_\delta(\hat x_{\infty})\cap \hat M_i.\notag
\end{align}
Write
$$ \omega_i=\sqrt{-1}\partial\bar\partial\psi^i=\sqrt{-1}\partial\bar\partial\Psi^i $$
for some  convex function $\psi^i$ on ${\mathfrak a}$ as in
(\ref{relation-psi}).
Then by  Lemma \ref{metric-equiv},  we get
\begin{align}\label{converge-metric}
(\psi_{ab}^i)& =
(\omega_i(e_a, e_b))\notag \\
&=  ((\Phi_i^{-1})^*\omega_i( \hat e_{a}^i, \hat e_{b}^i))\notag\\
&\ge A^{-1}(\hat\omega_i(  \hat e_{a}^i, \hat e_{b}^i)) \ge\delta_0 {\rm Id}, ~{\rm in}~ \Phi_i^{-1}( B_\delta(\hat x_{\infty})\cap \hat M_i),
\end{align}
where $\delta_0>0$ is   a uniform constant.
Note that $|e_a|_{\omega_i}$ is uniformly bounded as in the proof of (\ref{convergence-holo-vfs}).
Hence,  we derive
\begin{align}
\delta_0 {\rm Id}\le ( \psi^i_{ab}) \le \frac{1}{\delta_0} {\rm Id}, ~{\rm in}~ \Phi_i^{-1}( B_\delta(\hat x_{\infty})\cap \hat M_i), \notag
\end{align}
as long as $\delta_0$ is small enough. Choose $\hat x_i\in \hat M_i\to \hat x_\infty=\Phi_\infty(x_\infty)\in\hat M_\infty$ and let  $x_i=\Phi_i^{-1}(\hat x_i)\in M$. Then $x_i\to x_\infty\in M_\infty$ in the  Gromov-Hausdroff topology. Note that  $B_\delta(\hat x_{\infty})\cap \hat M_i$ contains a uniform small  geodesic ball  centered at $\hat x_i$ associated to the metric $\hat\omega_i$.
Therefore,  again by   Lemma \ref{metric-equiv}, it is easy to  see that there is  a sequence of open sets $U_{x_i}\subset \mathcal O$, each of which  contains an $\epsilon$-geodesic ball  $(B_{\epsilon}(x_i),  \omega_i)$  centered at $x_i$ associated to $\omega_i$,  where the radius  $\epsilon$ is a uniform   small constant,   such that
\begin{align}\label{a0}
\delta_0 {\rm Id}\le (\psi^i_{ab}) \le \frac{1}{\delta_0} {\rm Id}, ~{\rm in}~ U_{x_i}.
\end{align}

\textbf{Claim 1:}  There is a   uniform   small constant $\epsilon_0$ such that $\Delta_{2\epsilon_0}\subset (B_{\epsilon}(x_i),\omega_i)\subset U_{x_i}$,
where $\Delta_{2\epsilon_0}=\{z=(z^1,..., z^n)|~|z^l-x_i^l|< 2\epsilon_0\}$ is an  $2\epsilon_0$-square of dimension $n$  centered at $x_i$ in the  local coordinates $\{z_{(g)}^l\}_{l=1,...,n}$ on  $\mathcal O$ introduced  in  Section 2.1.

In fact, as in the proof of (\ref{convergence-holo-vfs}), we see that each  of
\begin{align}\label{e-norm}|e_a|_{\omega_i}^2= \omega_i(\frac{\partial}{
\partial z^a},\overline{\frac{\partial}{
 \partial z^a}}) ~(a=1,...,n)
 \end{align}
 is uniformly bounded. In particular,  $\omega_i$  as a metric tensor is uniformly bounded under the  local coordinates $\{z_{(g)}^l\}_{l=1,...,n}$ in  $(B_{\epsilon}(x_i),  \omega_i)$. Then   \textbf{Claim 1} follows immediately.

\textbf{Claim 2:}   There is a uniform constant $c_0>0$ such that
\begin{align}\label{psi-ab} c_0\le  \langle\alpha, \nabla\psi^i(x)\rangle \coth\alpha(x)\le \frac{1}{c_0}, ~\forall ~x\in \Delta_{\epsilon_0}.
\end{align}

 By (\ref {+21}) and (\ref{e-norm}), we have
 \begin{align}\label{norm-e}|e_\alpha|_{\omega_i}^2= \langle \alpha, \nabla\psi^i(x)\rangle \coth\alpha(x).
 \end{align}
 Then  the upper bound of  (\ref{psi-ab}) is true. Thus it suffices to get the lower bound.

 Case 1:
$\alpha(x)<<1$.  Then  there exists  $ x'\in W_\alpha$ on a Weyl wall $W_\alpha$  such that
$x=x'+t\alpha$ for some small $t\ge 0$. By the fact  $ \langle \alpha, \nabla\psi^i(x')\rangle =0$,   it follows that
$$ \langle \alpha, \nabla\psi^i(x)\rangle  = \alpha^a\alpha^b \psi^i_{ab}((x'+t'\alpha))|\alpha|^2t,$$
where $t'\le t$.   Since $\psi^i_{ab}((x'+t'\alpha))$ satisfies (\ref{a0})  by \textbf{Claim 1} and (\ref{a0}), we get
$$ \langle \alpha, \nabla\psi^i(x)\rangle \coth\alpha(x)= \alpha^a\alpha^b \psi^i_{ab}((x'+t'\alpha)) |\alpha|^2 t \coth(|\alpha|^2t)\ge  c_0.$$

 Case 2: there exists a $\delta_{0}'>0$ such that
$\alpha(x)\ge \delta_0'$ for any $x\in \Delta_{2\epsilon_0}$ ($\epsilon_0$ may be replaced by a smaller number if necessary). We need to prove
\begin{align}\label{lower-bound-gradient}
 \langle \alpha, \nabla\psi^i(x)\rangle\ge c_0', ~\forall~ x\in \Delta_{\epsilon_0},\end{align}
where $c_0'>0$ is a small uniform constant.
On contrary, there is a sequence of $y_i\in \Delta_{\epsilon_0}$ such that
$$ \langle \alpha, \nabla\psi^i (y_i) \rangle  \to 0.$$
Then we choose $z_i=y_i-t_0\alpha\in \Delta_{2\epsilon_0}$ such that
$$\alpha(z_i)\ge \frac{\delta_0'}{2}.$$
Thus $z_i\in \mathfrak a_+$. On the other hand, we have
$$  \langle \alpha, \nabla\psi^i(z_i)\rangle = \langle \alpha, \nabla\psi^i (y_i)\rangle  -\alpha^a\alpha^b \psi^i_{ab}(y_i-t'\alpha)|\alpha|^2t_0,$$
where $t'\le t_0$.
By  \textbf{Claim 1} and (\ref{a0}), we get
$$ \langle \alpha, \nabla\psi^i(z_i) \rangle  <  0,~i>>1.$$
This is impossible since $z_i\in \mathfrak a_+$! Thus (\ref{lower-bound-gradient}) is true and we also get the lower bound of (\ref{psi-ab}) since
$ \coth\alpha(x)>1$. \textbf{Claim 2} is proved.

Now we can complete the proof of Lemma \ref{nondegenerate-vector}.  By Lemma \ref {Hessian}, we have
\begin{align}\label{normal-e}
&\langle e_{a'}, e_{b'} \rangle_{  \omega_i} \equiv 0,~a'\neq b', ~a',b'=r+1,...,\frac{n+r}{2},\notag\\
&\langle e_{a'},  e_a\rangle_{  \omega_i}\equiv 0,~a=1,...,r,~a'=r+1,...,n.
\end{align}
Thus it remains to  check that for each $\alpha$ it holds
\begin{align}\label{non-zero-sub}\bar X(x_\infty)\neq 0, ~ \forall ~ \bar X\in {\rm span}\{e_{\alpha}^{\infty}, e_{-\alpha}^\infty \}.
 \end{align}
By (\ref{norm-e}) and  \textbf{Claim 2}, we see that
\begin{align}\label{relation-non-vanishing} e_{\alpha}^{\infty}(x_\infty)\neq 0~{\rm and}~ e_{ -\alpha}^\infty(x_\infty) \neq 0.
\end{align}
Hence, we need to show that  $e_{\alpha}^{\infty}$ and  $e_{-\alpha}^\infty$ are independent.

Case 1: $\alpha(x_i)\to \infty$. Then
$$\alpha(x)\to \infty, ~\forall~x\in \Delta_{2\epsilon_0}.$$
Moveover, by (\ref{lower-bound-gradient}),
there exists a $c_{0}'>0$ such that
$$ \langle \alpha, \nabla\psi^i(x)\rangle   \ge c_0', ~\forall~ x\in \Delta_{\epsilon_0}.$$
Thus  the matrix block $M^i_{\alpha}(x)$ of $\omega_i$ on  $\Delta_{\epsilon_0}$ in (\ref{+21}) uniformly converges to
\[h(x)
\begin{pmatrix}
1 & \sqrt{-1} \\
-\sqrt{-1} & 1 \\
\end{pmatrix},
\]
where $h(x)$ is a smooth positive function  $\Delta_{\epsilon_0}$. Note that the above matrix is degenerate and both of $e_{\alpha}^{\infty}$ and  $e_{-\alpha}^\infty$ do not vanish in $(B_{\epsilon}(x_\infty),  \omega_\infty)$ by (\ref{relation-non-vanishing}).  Hence,  $e_{\alpha}^{\infty}$ and  $e_{-\alpha}^\infty$ must be linearly dependent at $x_\infty \in (B_{\epsilon}(x_\infty),  \omega_\infty)$. In fact, we have
 $$|e_{\alpha}^i -\sqrt{-1}e_{-\alpha}^i|(x_i)\to 0,~{\rm as}~i\to\infty, $$
and  we get
\begin{align}
e_{\alpha}^\infty(x_\infty)= \sqrt{-1}e_{-\alpha}^\infty(x_\infty).\notag
\end{align}

 Let
 $$U=\{z=(z^1,...,z^n)|~|z^a-x_i^a|\le \epsilon_0,~a=1,...,r\}\subset M$$
 be a subset in $M$ and
 $$\pi: U\to U'= \{ z'=(z^1,...,z^r)|~|z^a-x_i^a| < \epsilon_0,~a=1,...,r\}\subset\mathfrak a $$
 be   the projection. Then  for any curve $\gamma$ starting from $x_i$ such that $\pi(\gamma)\cap \partial U'\neq \emptyset$,
  it is easy to see  by  (\ref{a0}) and (\ref{normal-e}) that   there is a uniform small constant $\epsilon'$ such that
 $${\rm length}(\gamma)\ge \epsilon'.$$
 On the other hand,  for any minimal geodesic  ray $\gamma$ starting from $x_i$ such that $\pi(\gamma)\cap \partial U'= \emptyset$ its length is bigger than
 $r_{\rm inj}(x_i)$ which has a uniform lower bound by the convergence of $\omega_i$.  Thus    there is a uniform  constant $\epsilon'$ such that the $\epsilon'$-geodesic ball  $(B_{\epsilon'}(x_i),  \omega_i)$  is contained in $U$.
Since the convergence of  $M^i_{\alpha}(x)$ is independent  of the coordinate variables  of $z^{a'}$ ($a'=r+1,...,n$), we can actually  prove that  $e_{\alpha}^{\infty}$ and  $e_{-\alpha}^\infty$ are   globally linearly dependent in $(B_{\epsilon'}(x_\infty),  \omega_\infty).$
But this is impossible by Lemma \ref{vector-independent}.  In the other words, Case 1  will not happen. Therefore,  we need to consider the following case.

Case 2:  $\alpha(x_i)\le A$ for some uniform constant $A$.
Then
\begin{align}\label{orthogonal}
 & \tilde e^i_\alpha=\frac{\sqrt2 e_\alpha}{\sqrt{ \langle \alpha, \nabla\psi^i(x_i)\rangle  \coth\alpha(x_i)}},\notag\\
& \tilde e^i_{-\alpha}=\frac{\sqrt2( e_{-\alpha}-\frac{\sqrt{-1}}{\coth\alpha(x_i)} e_\alpha)}{\sqrt{  \langle \alpha, \nabla\psi^i(x_i)\rangle \left(\coth\alpha(x_i)-\coth^{-1}\alpha(x_i)\right)}}
\end{align}
form a unitary  orthogonal basis on $ {\rm span}\{ \tilde e_{\alpha}, \tilde e_{-\alpha} \} ( x_i) \subset (TM|_{ x_i}, \omega_i)$.
As in the proof of    {(\ref{psi-ab})}, there is a uniform constant $a_0>0$ such that
$$a_0\le \langle \alpha, \nabla\psi^i(x_i)\rangle  \left(\coth\alpha(x_i)-\coth^{-1}\alpha(x_i)\right)\le a_0^{-1}.$$
Thus  the potentials $\tilde f_{\alpha}^i$  ( $\tilde f_{-\alpha}^i$)  of $\tilde e^i_{\alpha}$ associated to $\omega_i$ are uniformly bounded by (\ref{orthogonal}). Hence, as in the proof of Part (b) in Proposition (\ref{vector-limit}),
$\{\tilde e^i_{\alpha},\tilde e^i_{-\alpha}\}$ converges to a subbasis $\{\tilde e^\infty_{\alpha}, \tilde e^\infty_{-\alpha}\}$,   which is orthogonal and unitary at $x_{\infty}$ with respect to $ \omega_\infty$.  Moreover,  again  by (\ref{orthogonal}), we see that
 there are constants $a\neq 0,d\neq 0, c$ such that
\begin{align}
  \tilde e^\infty_\alpha=a e^\infty_\alpha,~
\tilde e^\infty_{-\alpha}= d( e^\infty_{-\alpha}- ce^\infty_\alpha).
\end{align}
Therefore,   $e_{\alpha}^{\infty}$ and  $e_{-\alpha}^\infty$ must be  independent. The proof of lemma is finished.

\end{proof}

Let $\Gamma$ be the set of stabilizers of ${\hat x}_{0,\infty}$
\begin{align}\label{set-stability}
\Gamma=\{g\in G|~ {g\cdot {\hat x}_{0,\infty}}= {\hat x}_{0,\infty}\}.
\end{align}
Then it is a closed subgroup of $G$. Moreover, by Lemma \ref{nondegenerate-vector}, we have

\begin{cor}\label{local-open} Suppose that $\hat x_{0,\infty}\in \hat{\mathcal O}_\infty^0$.
Then there is a small neighborhood $U_{\rm Id}$ of ${\rm Id}$ in $G$ such that
$$\Gamma\cap U_{\rm Id}=\{{\rm Id}\}.$$
\end{cor}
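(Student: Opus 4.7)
The plan is to interpret the statement via the orbit map
\[
\varphi \colon G \to \hat M_\infty, \quad g \mapsto g \cdot \hat x_{0,\infty},
\]
and deduce that $\varphi$ is a local biholomorphism at ${\rm Id}$ from the non-degeneracy provided by Lemma~\ref{nondegenerate-vector}. The corollary follows immediately, since a local inverse of $\varphi$ separates distinct group elements near the identity.

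First I would identify the differential $d\varphi_{\rm Id} \colon \mathfrak g \to T_{\hat x_{0,\infty}} \hat M_\infty$ with the evaluation map $X \mapsto \tilde X(\hat x_{0,\infty})$, where $\tilde X$ denotes the holomorphic vector field on $\hat M_\infty$ induced by the $G$-action constructed in (\ref{action def}). Both source and target are complex vector spaces of dimension $n$, the latter because $\hat M_\infty$ is biholomorphic to $M_\infty$ which is in turn diffeomorphic to $M$.

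Next I would verify injectivity of $d\varphi_{\rm Id}$. Any nonzero $X \in \mathfrak g$ expanded in the basis $\{E_1,\ldots,E_n\}$ produces $\tilde X$ as a nontrivial linear combination of $\{\hat e_1^\infty,\ldots,\hat e_n^\infty\}$, and these are linearly independent as holomorphic vector fields on $\hat M_\infty$ by Lemma~\ref{vector-independent}. Hence $\tilde X$ is a nontrivial induced vector field, and Lemma~\ref{nondegenerate-vector} then forces $\tilde X(\hat x_{0,\infty}) \neq 0$, since $\hat x_{0,\infty} \in \hat{\mathcal O}_\infty^0$. With both complex dimensions equal to $n$, the injective complex linear map $d\varphi_{\rm Id}$ is an isomorphism.

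The holomorphic inverse function theorem then yields a neighborhood $U_{\rm Id}$ of ${\rm Id}$ in $G$ on which $\varphi$ is a biholomorphism onto its image; in particular, $\varphi|_{U_{\rm Id}}$ is injective. For any $g \in \Gamma \cap U_{\rm Id}$ one has $\varphi(g) = \hat x_{0,\infty} = \varphi({\rm Id})$, hence $g = {\rm Id}$. No serious obstacle is anticipated beyond the careful identification of the linearized action with vector-field evaluation; the holomorphicity of the $G$-action on $\hat M_\infty$ near the identity is already encoded in (\ref{action def}) together with the fact that $G \times G \subset PGL(N+1,\mathbb C)$ acts holomorphically on $\mathbb CP^N$ and preserves $\hat M_\infty$.
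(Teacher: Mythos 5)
Your proposal is correct, and it reaches the corollary by a genuinely different (though closely related) mechanism than the paper. The paper works with one-parameter subgroups: it covers a neighborhood of ${\rm Id}$ by products $\exp\{z_1E_1\}\cdots\exp\{z_nE_n\}$ chosen small enough that the corresponding orbit segments stay in $\hat{\mathcal O}_\infty^0$, writes any $g\in U_{\rm Id}$ as $\exp\{t_0X\}$, and then uses (\ref{non-zero}) to say that the induced field does not vanish along the curve $s\mapsto \exp\{sX\}\cdot\hat x_{0,\infty}$, so the curve cannot return to $\hat x_{0,\infty}$ at time $t_0$; this is an ``integrate the flow'' argument. You instead linearize the orbit map $\varphi(g)=g\cdot\hat x_{0,\infty}$ at ${\rm Id}$, identify $d\varphi_{\rm Id}$ with $X\mapsto \tilde X(\hat x_{0,\infty})$, get injectivity from Lemma \ref{vector-independent} plus Lemma \ref{nondegenerate-vector} (indeed Lemma \ref{nondegenerate-vector} already encodes that nonzero $X$ gives a nonvanishing field at points of $\hat{\mathcal O}_\infty^0$), and conclude by the inverse function theorem; in fact injectivity of $d\varphi_{\rm Id}$ alone (an immersion at ${\rm Id}$) already gives local injectivity of $\varphi$, so the dimension count is a bonus rather than a necessity. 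What your route buys is that it sidesteps the slightly delicate step ``nonvanishing of $\tilde X$ along the segment implies $g\cdot\hat x_{0,\infty}\neq\hat x_{0,\infty}$'' (a priori one must rule out the orbit closing up), which the paper leaves implicit; what it costs is that you must know $\varphi$ is differentiable (holomorphic) in $g$ near ${\rm Id}$ with differential given by evaluation of the induced fields. Be a bit more careful there than your last sentence suggests: the homomorphism of $G\times G$ into $PGL(N+1,\mathbb C)$ depends on $\Phi_i$, and the action on $\hat M_\infty$ is only defined through the limit (\ref{action def}); the correct justification is that the limit fields $\hat e^\infty_1,\dots,\hat e^\infty_n$ are holomorphic, satisfy the bracket relations of $\mathfrak g$ (brackets pass to the smooth limit), and the local $G$-action near ${\rm Id}$ is obtained by composing their complex-time flows, which depend holomorphically on the time parameters and the base point. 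This is the same implicit ingredient the paper uses when it acts by products of exponentials, so your argument is at the same level of rigor and, with that point made explicit, complete.
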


\begin{proof}Let $\{E_1,...,E_n\}$ be a basis of $\mathfrak g$ chosen as in Lemma \ref{nondegenerate-vector}. Then there exists a small $\epsilon>0$ such that
$$(\exp\{z_1E_1\}......\exp\{z_nE_n\} )\hat x_{0,\infty}\subset \hat{\mathcal O}_\infty^0,~ \forall~ |z_l|\le \epsilon.$$
Since the set
$\{\exp\{z_1E_1\}......\exp\{z_nE_n\}||z_i|\le \epsilon\}\subset G$ covers an open set of ${\rm Id}$,
there exists $U_{\rm Id}\subset G$ such that
$$U_{\rm Id}\subset \{\exp\{z_1E_1\}......\exp\{z_nE_n\}||z_i|\le \epsilon\}.$$
Note that for any $g\in U_{\rm Id}$, there is a $|t_0|\le \epsilon$ and $X\in \mathfrak g$ such that $g=\exp\{t_0X\}$. By
(\ref{non-zero}),
$$ X(\hat x)\neq 0, ~\forall ~\hat x = \exp\{sX\}\cdot \hat x_{0,\infty}, ~|s|\le |t_0|.$$
It follows that
$$g\cdot \hat x_{0,\infty}\neq \hat x_{0,\infty},~\forall ~g\neq {\rm Id}\in U_{\rm Id}.$$ Thus $g\in\Gamma\cap U_{\rm Id}$ if and only if $g={\rm Id}$. The corollary is proved.

\end{proof}

By Corollary \ref{local-open}, $\Gamma$ is a discrete set. Next we show that
\begin{align}\label{finite-group}\#\Gamma =N_0<\infty.
\end{align}
We use the contradiction argument to prove (\ref{finite-group}) and suppose that $\#\Gamma=\infty$.
Then by Corollary \ref {local-open}, there is an infinite sequence of $\{g_l\in \Gamma | ~l\in\mathbb Z\}$ such that ${\rm dist}(g_l, {\rm Id})\to \infty$ as
$l\to \infty$. On the other hand, by the $KAK$ decomposition of $G$ \cite{Kna},  we see that there are $k_l,k_l'\in K$ and $a_l\in T$ such that
$g_l=k_l'\cdot a_l\cdot k_l$ and ${\rm dist}(a_l, {\rm Id})\to \infty$ as
$l\to \infty$. It follows that
there exists a $\delta_1>0$ such that
\begin{align}\label{distance} {\rm dist}(k_l\cdot \hat x_{0,\infty}, D \hat M_\infty)\ge \delta_1,~\forall~l.
\end{align}
In fact, if (\ref{distance}) is not true, there is a subsequence $\{k_{\alpha_l}\}$
which converges to $ k_0\in K$ and $k_0\cdot \hat x_{0,\infty}\in D \hat M_\infty.$
Note that any $g\in G$ fixes the set $D \hat M_\infty$.  Then
$$\hat x_{0,\infty}= k_0^{-1}( k_0 \cdot\hat x_{0,\infty})\in D \hat M_\infty,$$
which contradicts to the fact that $\hat x_{0,\infty}\in\hat{\mathcal O}_\infty$.

By (\ref{distance}), there is a compact set $\bar V\subset \hat{\mathcal O}_\infty$ such that $k_l\cdot \hat x_{0,\infty}\subset \bar V$
  for all $k_l$.  Furthermore, we have

\textbf{Claim 2:} For any small $\delta>0$, there is a large number $c_\delta$ such that
\begin{align}\label{c-closed}
{\rm dist}( a\cdot\hat y, D \hat M_\infty)\le \delta, ~\forall ~\hat y\in \bar V,
\end{align}
as long as $  {\rm dist}(a, {\rm Id})\ge c_\delta$,
where $a\in T$.

By (\ref {c-closed}), we see that there is a subsequence of integers  $\alpha_l$  such
that
$$a_{\alpha_l}(k_{\alpha_l}\cdot \hat x_{0,\infty}) \to \hat z\in D\hat M_\infty,~{\rm as}~\alpha_l \to \infty.$$
It follows that
$${\rm dist}( k_{\alpha_l}' [a_{\alpha_l}(k_{\alpha_l}\cdot \hat x_{0,\infty})], D\hat M_\infty)\to 0,
~{\rm as}~\alpha_l \to \infty.$$
But this is impossible since $ g_l\cdot \hat x_{0,\infty}=\hat x_{0.\infty}\in \hat{\mathcal O}_\infty.$ Hence, (\ref{finite-group})
is true.

To prove  \textbf{Claim 2},  we consider any element $ X\in \mathfrak g$ with ${\rm im}(X)\in \mathfrak k$ and its potential function  $f_X$ associated to the Fubini-Study metric $ \frac{1}{m} \omega_{FS}$ as in Proposition \ref{vector-limit} for the torus manifold  $\mathbb CP^N$.
Let  $M_X^{1}$ be a subset in $\mathbb CP^N$ defined by
$$M_X^1=\{x \in \mathbb CP^N | ~ f_X(x)= \max_{\mathbb CP^N} f_X\}.$$
Then
\begin{align}\label{zero-set-w}W_X=\{x\in \mathbb CP^N| ~x=\lim_{t\to\infty}\exp(t\text{re}(X))\cdot y, ~ {\rm for ~some}~y\in\hat{\mathcal O}_\infty^0 \}\subset M_X^{1}.
\end{align}
Moreover, if $X\in \mathfrak a$ is a torus vector field, it can be shown that $M_X^{1}$ is a subplane in $\mathbb CP^N$.

\begin{lem}\label{non-zero-vector} $W_X\cap\hat { \mathcal O}_\infty=\emptyset$ for any torus vector field $X\in \mathfrak a$.
\end{lem}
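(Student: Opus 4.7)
I would prove Lemma \ref{non-zero-vector} by a commutation-of-limits argument, combining the explicit diagonal action of the torus on $\mathbb{CP}^N$ with Proposition \ref{vector-limit}(a) applied on each approximating $G$-manifold $\hat M_i=\Phi_i(M)$. Fix $y\in\hat{\mathcal O}_\infty^0$ and set $x:=\lim_{t\to\infty}\exp(t\,\mathrm{re}(X))\cdot y$; the goal is to show $x\in D\hat M_\infty$, which forces $x\notin\hat{\mathcal O}_\infty$.

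First, I would make the torus flow on $\mathbb{CP}^N$ explicit. Choosing homogeneous coordinates $[w_0:\cdots:w_N]$ diagonalizing the $T^{\mathbb{C}}$-action, with $X$ acting by real weights $\lambda_0,\ldots,\lambda_N$, one has
\[
\exp(t\,\mathrm{re}(X))\cdot[w_0:\cdots:w_N]=[e^{t\lambda_0}w_0:\cdots:e^{t\lambda_N}w_N],
\]
whose $t\to\infty$ limit retains only the coordinates of maximal weight $\lambda_{\max}$ and defines a continuous map on the open set $U:=\{[w]\in\mathbb{CP}^N:w_j\neq 0\text{ for some }j\text{ with }\lambda_j=\lambda_{\max}\}$. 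Since $y\in\hat{\mathcal O}_\infty^0=\hat{\mathcal O}_\infty\setminus\bigcup_\alpha\tilde W_\alpha$, every homogeneous coordinate of $y$ is nonzero, so in particular $y\in U$.

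Next, using the submanifold convergence $\hat M_i\to\hat M_\infty$ and the density of $\Phi_i(\mathcal O)$ in $\hat M_i$, I would pick $y_i=\Phi_i(z_i)\in\Phi_i(\mathcal O)$ with $z_i\in\mathcal O$ and $y_i\to y$. The right-invariant torus vector field $X\in\mathfrak a$ never vanishes on $\mathcal O\cong G$, so $M_X^1\subset M\setminus\mathcal O$ holds for each $\omega_i$. Applying Proposition \ref{vector-limit}(a) to $(M,\omega_i)$ and invoking the $G\times G$-equivariance $\Phi_i\circ g=g\circ\Phi_i$ from \eqref{equivariant} gives
\[
x_i:=\lim_{t\to\infty}\exp(t\,\mathrm{re}(X))\cdot y_i=\Phi_i\bigl(\lim_{t\to\infty}\exp(t\,\mathrm{re}(X))\cdot z_i\bigr)\in\Phi_i(M_X^1)\subset\Phi_i(M\setminus\mathcal O).
\]
For $i$ large, $y_i\in U$, so continuity of the flow-limit map on $U$ yields $x_i\to x$ as $i\to\infty$. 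Since $\Phi_i(M\setminus\mathcal O)\to D\hat M_\infty$ by the very definition of $D\hat M_\infty$, the limit $x$ lies in $D\hat M_\infty$, completing the proof.

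The main obstacle is the commutation of the two limits $t\to\infty$ and $i\to\infty$, which is delivered by the explicit torus-flow formula through continuity on $U$. The hypothesis $y\in\hat{\mathcal O}_\infty^0$ rather than merely $\hat{\mathcal O}_\infty$ is essential here: on points of $\bigcup_\alpha\tilde W_\alpha$ a maximal-weight coordinate may vanish, the flow limit becomes discontinuous, and the argument would break. A minor secondary point is noting that although the potential and the max set of $X$ depend on $\omega_i$, the containment $M_X^1\subset M\setminus\mathcal O$ is independent of $i$ because it follows only from the nowhere-vanishing of $X$ on $\mathcal O$.
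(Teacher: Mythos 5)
Your route is genuinely different from the paper's: you linearize the torus flow on $\mathbb{CP}^N$ and use continuity of the weight-projection ("flow-limit") map $L$ on the open set $U$, whereas the paper works with the Fubini--Study potential $f_X$, its monotonicity along flow lines, the convergence $f_{X^i}\to f_X$ and $\hat M^1_{X^i}\to \hat M^1_{\infty,X}$, and a trapping argument with neighborhoods $T_{2\delta}\subset T_\delta$ of $\hat M^1_{\infty,X}$ to commute the limits $t\to\infty$ and $i\to\infty$. Your idea is attractive because the infinite-time behavior of a fixed diagonal linear flow is completely explicit.

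However, there is a genuine gap at exactly the step you call the main obstacle. The element $\exp(t\,\mathrm{re}(X))$ acting on $\hat M_i$ is the image of $G\times G$ in $PGL(N+1,\mathbb{C})$ associated with the $i$-th embedding, and the paper stresses that this embedding "may depend on the map $\Phi_i$"; the action on $\hat M_\infty$ is only defined as a limit, via \eqref{action def}, and in the paper's own proof of this lemma one only has vector fields $X^i$ on $\hat M_i$ \emph{converging} to $X$, not equal to it. Your argument uses the equivariance \eqref{equivariant} (which involves the $i$-dependent action) to place $x_i$ in $\Phi_i(M\setminus\mathcal O)$, and then uses continuity of the single limit map $L$ of the \emph{limiting} diagonal flow to conclude $x_i\to x$; this silently identifies the $i$-th one-parameter subgroup with the fixed diagonal one. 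Without that identification, continuity of $L$ at $y$ says nothing about $x_i=L_i(y_i)$, where $L_i$ is the limit map of the $i$-th flow, and the exchange of limits is not actually delivered -- this is precisely the difficulty the paper's $T_\delta$-trapping argument is built to handle. The gap is repairable: since each $\omega_i$ is $K\times K$-invariant, the $T\times T$-weight spaces of $H^0(M,K_M^{-m})$ are $L^2(\omega_i)$-orthogonal, so after composing each $\Phi_i$ with a unitary transformation (harmless for properties 1)--4)) one may take weight-adapted orthonormal bases, in which the torus acts by the \emph{same} diagonal matrices for every $i$ and hence also in the limit; alternatively one must prove convergence of the one-parameter subgroups in $PGL(N+1,\mathbb{C})$ and a corresponding convergence $L_i\to L$ near $y$. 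Some such normalization or convergence statement must be added before your continuity step is legitimate; as written, the proof does not close.
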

\begin{proof} On the contrary, we suppose that there is a point $\hat x\in W_{X} \cap\hat { \mathcal O}_\infty$. Then there is a point $\hat y_\infty
\in \hat{\mathcal O}_\infty^0$ such that
$$\hat x=\lim_{t\to\infty} \exp\{t{\rm re}(X)\}\cdot \hat y_\infty.$$
Let $X^i$ be a sequence of holomorphic vector fields on $\hat M^i$ which converges to $X$ with respect to  $K\times K$-invariant metric $(\Phi^{-1}_i)^*\omega_i$. Take a sequence of $\hat y_i\in \hat{\mathcal O}_i$ such that $\hat y_i\to \hat y_\infty$. Then by Proposition \ref{vector-limit}, there is a point $\hat x_i\in \hat M_{X^i}^1 $ for each $\hat y_i$ such that
$$\hat x_i=\lim_{t\to\infty} \exp\{t{\rm re}(X_i )\}\cdot \hat y_i,$$
where
\begin{align}\label{maximal-set-sequ} \hat M_{X^i}^1= \{x \in \hat M_i | ~ f_{X^i}(x)= \max_{\hat M_i} f_{X^i}\} \subset (\hat M_i \setminus \hat{\mathcal O}_i )=D\hat M_i,
\end{align}
and $f_{X^i}$ is a potential of $X^i$ with respect to the metric $(\Phi^{-1}_i)^*\omega_i$ on $\hat M_i$, which converges to a potential $f^\infty_{X}$ with respect to the metric $(\Phi^{-1}_\infty)^*\omega_\infty$ on $\hat M_\infty$.
Thus there is a limit $\hat x_\infty\in D\hat M_\infty\cap \hat M_{\infty,X}^1 $ of $\hat x_i$ in Gromov-Hausdorff topology,
 where $ (\hat M_{\infty, X}^1,(\Phi^{-1}_\infty)^*\omega_\infty)$
   is the Gromov-Hausdorff topology limit of $(\hat M_{X^i}^1, (\Phi^{-1}_i)^*\omega_i)$  as in Proposition \ref{vector-limit}.
   Note that
$$\max_{\mathbb CP^N} f_X= \max_{\hat M_\infty}f^\infty_{X}$$
since $(\Phi^{-1}_\infty)^*\omega_\infty$ and $\frac{1}{m} \omega_{FS}|_{\hat M_\infty}$ are both invariant under the $S^1$-group generated by ${\rm im}(X)$ \cite{Zhu00}.
Hence, we get
 \begin{align}\label{maximal-cpN}\hat M_{\infty,X}^1=\hat M_\infty\cap M_{X}^1.
 \end{align}
 Moreover,  by Proposition \ref{vector-limit} and (\ref{maximal-cpN}), we have
 \begin{align}\label{maximal-cpN-2}\max_{\hat M_i} f_{X^i}= \max_{\mathbb CP^N} f_X=A_0,~\forall x\in \hat M_{\infty,X}^1.
 \end{align}
It follows that $\hat x_\infty\in W_X $.

Choose a small neighborhood $T_\delta$ around the set $ \hat M_{\infty,X}^1$ in $\mathbb CP^N$ such that
\begin{align}\label{t-delta-ball} & 1)~f_X^\infty(x)\ge A_0- \delta, ~\forall x\in \hat M_\infty\cap T_\delta;\notag\\
& 2)~ f_X^\infty(x)= A_0- \delta, ~\forall x\in \hat M_\infty\cap \partial T_\delta;\notag\\
& 3)~ f_{X^i}(x)\ge A_0- 2\delta,~\forall x\in \hat M_i\cap T_\delta,~\forall i\ge i_0;\notag\\
& 4)~\hat x\not\in T_\delta.
\end{align}
3) can be guaranteed since $f_{X^i}$ converges to $f_{X}^\infty$ smoothly and $\hat M_{X^i}^1$ converges to $\hat M_{\infty,X}^1$ in Gromov-Hausdorff topology, and 4) is form the assumption that $\hat x\in W_{X} \cap\hat { \mathcal O}_\infty$.
Note that $f_{X^i}$ is monotone along the integral curve $\exp\{t{\rm re}(X^i )\}\cdot \hat y_i$. Then there is a uniform constant $T_N>0$ such that
$$\exp\{t{\rm re}(X^i )\}\cdot \hat y_i\subset T_\delta,~\forall ~t\ge T_N,~ i\ge i_0.$$
Thus we can choose a sequence of $z_i=\exp\{t_i{\rm re}(X^i )\}\cdot \hat y_i \in \hat M_i\cap (T_\delta\setminus T_{2\delta})$ which converges to a point $z_\infty\in \hat M_\infty\cap \overline{T_\delta}$, where $T_{2\delta}\subset T_{\delta}$ is another small neighborhood around $ \hat M_{\infty,X}^1$ in $\mathbb CP^N$ such that
$${\rm dist}( \hat M_{X^i}^1, \hat M_i\cap \partial T_{2\delta})\ge \delta'$$
for some sufficiently small $\delta'$. It follows that $t_i$ converges subsequently to some $T_0<\infty$ as $i\to\infty$.
On the other hand, by the convergence of $X^i$, we see that
$$\lim_i \exp\{t{\rm re}(X^i )\}\cdot \hat y_i= \exp\{t{\rm re}(X )\}\cdot \hat y_\infty,~\forall t\le 2T_0.$$
Hence we derive
$$z_\infty=\exp\{T_0{\rm re}(X )\}\cdot \hat y_\infty.$$
By the monotonicity of $f_{X}$ along the integral curve $\exp\{t{\rm re}(X )\}\cdot \hat y_\infty$, we conclude that
$$\exp\{t{\rm re}(X )\}\cdot \hat y_\infty\in \hat M_{\infty} \cap T_\delta, ~\forall t> T_0$$
and consequently, $\hat x\in \hat M_{\infty}\cap T_\delta.$ Therefore, we get a contradiction with 4) in (\ref{t-delta-ball}).
The lemma is proved.

\end{proof}

\begin{proof}[Proof of \textbf{Claim 2}] Suppose that \textbf{Claim 2} is not true. Then there exist a $\delta_0$, a sequence of $a_l\in T$ and a sequence of $\hat y_l\in \bar V$ such that
\begin{align}\label{not-19} {\rm dist}( a_l\cdot\hat y_l, D\hat M_\infty)\ge \delta_0,
\end{align}
where ${\rm dist} (a_l, \rm Id)\to\infty$ as $l\to\infty$. Write each $a_l$ as $a_l=\exp\{\sum b_l^i X_i\}$ for some real numbers $b^1_l,...,b^r_l$, where $\{X_1,...,X_r\}$ is a basis of $\mathfrak a$. Then
$\sum_i | b_l^i |\to\infty$ as $l\to\infty$. Without loss of generality, we may assume that
$$\sum b^i_l X_i=b^1_l (X_1+Y_l),$$
where $b^1_l\to \infty$ and $|Y_l|\to 0$ as $l\to\infty$. Then by Lemma \ref{non-zero-vector},
for any fixed $\hat y\in \bar V$ it holds
$${\rm dist}( \exp\{b_l^1(X_1+Y_l)\} \cdot\hat y, D\hat M_\infty \cap (\cup_{\alpha}\tilde W_\alpha))\to 0, ~{\rm as}~
b^1_l\to \infty,$$
where $ \tilde W_1,...,\tilde W_{N+1}$ are the $(N+1)$ hyperplanes in $\mathbb CP^N$ as in (\ref{zero-set-x}).
Since $\bar V$ is a compact set away from $\cup_{\alpha}\tilde W_\alpha$, as in the proof of {\textbf{Claim 1}} in Section 3, the above convergence is uniform. It follows that
$${\rm dist}( a_l\cdot\hat y_l, D\hat M_\infty)\to 0, ~ {\rm as}~
a_l\to \infty,$$
which contradicts to (\ref{not-19}).  {\textbf{Claim 2}} is proved.

\end{proof}

By (\ref{finite-group}), we can finish the proof of Proposition \ref{g-structure}.

\begin{proof}[Completion of proof of Proposition \ref{g-structure}]
For any $\hat x= h\cdot\hat x_{0,\infty}$, we have
$$ {(hgh^{-1})\cdot {\hat x} = (hgh^{-1}) ( h \cdot \hat x_{0,\infty} )}= {\hat x}, ~\forall ~g\in \Gamma.$$
It follows that $h\Gamma h^{-1}$ is the set of stabilizers of ${\hat x_\infty}$. By (\ref{finite-group}), $G\cdot \hat x_{0, \infty}$ is a finite quotient space. Since the above argument works for any $\hat x_\infty\in \hat{\mathcal O}_\infty^0$, in particular, both Corollary \ref{local-open} and (\ref{finite-group}) hold. Thus,
each orbit $G\cdot \hat x_\infty$ is isomorphic to $G/\Gamma_{\hat x_\infty}$,
where $\Gamma_{\hat x_\infty}$ is a finite subgroup of ${\rm PU}( N+1, \mathbb C)$. Moreover
$G\cdot \hat x_\infty\cap G\cdot \hat x_\infty'= \emptyset$ for any $\hat x_\infty, \hat x_\infty'\in \hat{\mathcal O}_\infty^0$. Otherwise
$G\cdot \hat x_\infty= G\cdot \hat x_\infty'.$ This means that any two different orbits are disjoint.
Note that
$$ \hat{\mathcal O}_\infty^0=\cup_{ \hat x_\infty\in\hat{\mathcal O}_\infty^0} G\cdot \hat x_\infty.$$
It is easy to see that for any bounded set $U$ in $\hat{\mathcal O}_\infty^0$ there are finitely many different orbits passing through $U$.
Since $\mathcal O_\infty\setminus \hat{\mathcal O}_\infty^0$ consists of  finitely many  subvarieties of codimension at least $1$ in $\mathcal O_\infty$, $\hat{\mathcal O}_\infty^0$ is connected. As a consequence, there is only one orbit $G\cdot \hat x_{0,\infty}$ through $U$. Otherwise $U$ will be disconnected. Therefore, we prove that $\hat{\mathcal O}_\infty^0= G\cdot \hat x_{0,\infty}$.

It remains to show that $\Gamma=\{{\rm Id}\}$ in (\ref{finite-group}). For any compact set $K_\infty^\epsilon\subset \hat{\mathcal O}_\infty^0$ with
\begin{eqnarray}\label{0112}
\text{vol}_{{\hat \omega}_\infty}(\hat M_\infty\backslash K_\infty^\epsilon)<\epsilon,
\end{eqnarray}
where $\hat \omega_\infty= \frac{1}{m} \omega_{FS}|_{ \hat M_\infty}$,  we choose a family of disjointed geodesic balls $\hat B_{r_l}$ in $ \hat {\mathcal O}_\infty^0$ such that the following holds:

1) $\sum_l {\rm vol}_{\hat\omega_\infty}(\hat B_{r_l}) \ge \text{vol}_{\hat \omega_\infty}( K_\infty^\epsilon) -\epsilon.$

2) For each $\hat B_{r_l}$, there are disjointed geodesic open sets $B_{r_l}^\alpha\subset \mathcal O$, $\alpha=1,..., N_0,$ such that $\pi^{-1}(\hat B_{r_l}) =\cup_\alpha B_{r_l}^\alpha$, where $\pi: ~ \mathcal O\to {\hat{\mathcal O}}_\infty^0$ is the projection by $ \lim_i\Phi_{i} (\Gamma\cdot x)=\Phi_\infty (\Gamma \cdot x_\infty)=\hat x_\infty$.

3) $(\Phi_i(B_{r_l}^\alpha), \hat{ \omega_i})$ is isometric to $(\Phi_i(B_{r_l}^\beta), \hat {\omega_i})$ for any $\alpha,\beta$.
\newline By Lemma \ref{nondegenerate-vector} (also see (\ref{converge-metric})), we see that
$(\Phi_i(B_{r_l}^\alpha), \hat \omega_i)$ converges to $(\hat B_{r_l}, \hat{\omega}_\infty)$ uniformly as open submanifolds when $i\to\infty$. In particular, it holds that for each $\alpha$,
$$\lim_i \sum_l {\rm vol}_{ \hat {\omega_i}} (\Phi_i(B_{r_l}^\alpha))=\sum_l {\rm vol}_{\hat{\omega}_\infty}(\hat B_{r_l}).$$
Note that $B_{r_l}^\alpha$ are disjointed for each $l, \alpha$.
Thus
\begin{align}\text{vol}_{\hat {\omega_i}}(\Phi_i(M)) &\ge \sum_{l,\alpha} {\rm vol}_{ \hat {\omega_i}} (\Phi_i(B_{r_i}^\alpha))\notag\\
&\ge N_0 \sum_l {\rm vol}_{\hat{\omega}_\infty}(\hat B_{r_l}) -N_0\epsilon\notag
\end{align}
as long as $i$ is large enough. By (\ref{0112}), it follows that
$$\text{vol}_{\hat {\omega_i}}(\Phi_i(M))\ge N_0 \text{vol}_{\hat{\omega}_\infty}(\hat M_\infty)- (2N_0+2)\epsilon.$$
But this is impossible if $N_0\ge 2$ since
\begin{align}\label{n0-1}\text{vol}_{\hat{\omega_i}}(\Phi_i(M))=\text{vol}_{\hat{\omega}_\infty}(\hat {M}_\infty)= c_1(M)^n.
\end{align}
Thus $\Gamma=\{\text{Id}\}$, and so $G$ acts on $\hat x_{0,\infty}$ freely. Hence, we prove that $(M_\infty,J_\infty)$ is a $G$-manifold.

\end{proof}

\section{Uniqueness of complex structures on    semisimple $G$-compactifications }
In this section, we first prove a uniqueness result about  complex structures on  $G$-manifolds when
 $G$ is semisimple. Then we complete the proof of  Theorem \ref{singular-type2}.

 We begin with following elemental lemma.

 \begin{lem}\label{uniqueness of splitting-structure}
Let $(Z,J)$ be an $r$-dimensional toric manifold with an $r$-dimensional torus $T^r$-action. Let $T^r=T^m\times T^{r-m}$ and $J'={\rm diag}(-J|_{T^m},J|_{T^{r-m}})$ be an integral almost complex structure on the open $T^r$-orbit $\mathcal O$ of $Z$. Suppose that $J'$ can be extended to a smooth complex structure on $Z$. Then $Z$ must be a product of $m$-dimensional toric manifold and $(r-m)$-dimensional toric manifold. Furthermore, $(Z,J)$ and $(Z,J')$ are bi-holomorphic.
\end{lem}

\begin{proof}
On the open $T^r$-orbit $\mathcal O$, we choose log-affine coordinates $w_1,...,w_r$. Let $\Sigma$ be the fan of $Z$ and $\sigma_a$ an $r$-dimensional cone in it. Then on the corresponding chart $U_a\subset Z$, we have local coordinates $z^1,...,z^r\in\mathbb C$ such that on $U_a\cap\mathcal O$,
\begin{eqnarray}\label{a-b-0}
\left\{\begin{aligned}
z^i&=\exp(\sum_jw^j \alpha_j^i+\sum_\beta w^\beta a_\beta^i)\\
z^\alpha&=\exp(\sum_jw^ja_j^\alpha+\sum_\beta w^\beta a_\beta^\alpha)
\end{aligned}\right.,
1\leq i,j\leq m<\alpha,\beta\leq r,
\end{eqnarray}
where
\begin{eqnarray}\label{a-b}
A=\left(\begin{aligned}&(a_i^j)_{m\times m}&(a_\alpha^j)_{m\times (r-m)}\\
&(a_i^\beta)_{(r-m)\times m}&(a_\alpha^\beta)_{(r-m)\times (r-m)}
\end{aligned}\right)\in GL_r(\mathbb Z).
\end{eqnarray}

On the open orbit $\mathcal O$, we have
\begin{eqnarray}\label{J'}
\begin{aligned}
J'=&\sqrt{-1}\left[-\sum_i(dw^i\otimes{\frac{\partial}{\partial w^i}}-d\bar w^i\otimes{\frac{\partial}{\partial \bar w^i}})\right.\\
&\left.+\sum_\alpha(dw^\alpha\otimes{\frac{\partial}{\partial w^\alpha}}-d\bar w^\alpha\otimes{\frac{\partial}{\partial \bar w^\alpha}})\right].
\end{aligned}
\end{eqnarray}
By \eqref{a-b}, it follows that
\begin{eqnarray}\label{J'_a}
\begin{aligned}
J'|_{U_a\cap\mathcal O}=&\sqrt{-1}\left[-\sum_i(dz^i\otimes{\frac{\partial}{\partial z^i}}-d\bar z^i\otimes{\frac{\partial}{\partial \bar z^i}})+\sum_\alpha(dz^\alpha\otimes{\frac{\partial}{\partial z^\alpha}}-d\bar z^\alpha\otimes{\frac{\partial}{\partial \bar z^\alpha}})\right]\\
&-4\text{Im}\left[\sum_{k,j}(A^{-1})^\alpha_k a^j_\alpha{\frac{z^j}{z^k}}dz^k\otimes{\frac{\partial}{\partial z^j}}+\sum_{i,\gamma}(A^{-1})^\alpha_ia^\gamma_\alpha{\frac{z^\gamma}{z^i}}dz^i\otimes{\frac{\partial}{\partial z^\gamma}}\right.\\
&-\left.\sum_{\beta,i}(A^{-1})^j_\beta a^i_j{\frac{z^i}{z^\beta}}dz^i\otimes{\frac{\partial}{\partial z^\beta}}+\sum_{\beta,\gamma}(A^{-1})^j_\beta a^\gamma_j{\frac{z^\gamma}{z^\beta}}dz^\beta\otimes{\frac{\partial}{\partial z^\gamma}}\right],
\end{aligned}
\end{eqnarray}
where $A^{-1}=((A^{-1})^p_q)$ is the inverse matrix of $A$ with elements $(A^{-1})^p_q$.
Note that $J'$ can be smoothly extended on whole $U_a$. By taking any variable $z^l$ of $\{z^1,...,z^r\}$ to $0$, it is easy to see that
\begin{align}\label{a-relation}
\left\{\begin{aligned}
&(A^{-1})_j^\alpha a_\alpha^k=0,~j\not=k\\
&(A^{-1})_j^\alpha a_\alpha^\gamma=0
\end{aligned}\right.
\text{ and }
\left\{\begin{aligned}
&(A^{-1})_\alpha^j a_j^k=0\\
&(A^{-1})^j_\alpha a_j^\beta=0,~\alpha\not=\beta
\end{aligned}\right..
\end{align}
Thus, by the fact $J'^2=-1$, we get
\begin{eqnarray}\label{J'-1}
\begin{aligned}
J'|_{U_a}=&\sqrt{-1}\left[\sum_i\epsilon_i(dz^i\otimes{\frac{\partial}{\partial z^i}}-d\bar z^i\otimes{\frac{\partial}{\partial \bar z^i}})\right.\\
&\left.+\sum_\alpha\epsilon_\alpha(dz^\alpha\otimes{\frac{\partial}{\partial z^\alpha}}-d\bar z^\alpha\otimes{\frac{\partial}{\partial \bar z^\alpha}})\right],
\end{aligned}
\end{eqnarray}
where each of $\epsilon_i$ and $\epsilon_\alpha$ is $1$ or $-1$.

By \eqref{J'} and (\ref{J'-1}), there must be $m$ numbers of $-1$ and $(r-m)$ numbers of $1$ in $\{\epsilon_1,...,\epsilon_r\}$. Without of loss of generality, we may assume that
$$\epsilon_i=-1,\epsilon_\alpha=1.$$
Then by \eqref{J'_a}, we get
\begin{eqnarray}\label{a'}
\left\{\begin{aligned}
&(A^{-1})_j^\alpha a_\alpha^k=0,\\
&(A^{-1})_j^\alpha a_\alpha^\gamma=0
\end{aligned}\right.
\text{ and }
\left\{\begin{aligned}
&(A^{-1})_\alpha^j a_j^k=0\\
&(A^{-1})^j_\alpha a_j^\beta=0,
\end{aligned}\right.~\forall ~ i,j,\alpha,\beta.
\end{eqnarray}
On the other hand, the matrices
\begin{eqnarray*}
\left(\begin{aligned}&( a_\alpha^j)_{m\times (r-m)}\notag\\
&(a_\alpha^\beta)_{(r-m)\times (r-m)}
\end{aligned}\right)
\text{ and }
\left(\begin{aligned}&( a_i^j)_{m\times m}\notag\\
&(a_i^\beta)_{(r-m)\times m}
\end{aligned}\right)
\end{eqnarray*}
are both of full ranks. Thus by \eqref{a'}, we have
$$(A^{-1})^\alpha_j=0,(A^{-1})_\alpha^j=0,$$
i.e.,
$$a^\alpha_j=0,a_\alpha^j=0.$$
As a consequence, by \eqref{a-b-0}, it follows that
\begin{eqnarray}\label{a-b-1}
\left\{\begin{aligned}
z^i&=\exp(\sum_jw^ja_j^i)\\
z^\alpha&=\exp(\sum_\beta w^\beta a_\beta^\alpha)
\end{aligned}\right.,~{\rm on}~U_a\cap \mathcal O.
\end{eqnarray}
Hence, the first equation in \eqref{a-b-1} defines a toric manifold $Z_1$ with $T^m$-action, while the second equation in \eqref{a-b-1} defines a toric manifold $Z_2$ with $T^{r-m}$-action. This proves that $Z=Z_1\times Z_2$.

By \eqref{a-b-1} the map
\begin{align}\label{tor-diff}
\Phi(z^i,z^\alpha)=(\bar z^i,z^\alpha)
\end{align}
is well-defined on $Z$, which satisfies $\Phi^*J'=J$.
Thus $(Z,J)$ and $(Z,J')$ are bi-holomorphic.

\end{proof}

 \begin{theo}\label{unique-complexstructure} Let $G$ be a semisimple reductive Lie group. Let $(M, K_M^{-1}, J)$ and  $(\tilde M,  K_{\tilde M}^{-1}, \tilde J)$ be two Fano compactifications of  $G$.  Suppose that $\tilde M$ is  diffeomorphic to $M$. Then  $(\tilde M, \tilde J)$ is  bi-holomorphic to $(M, J)$.

 \end{theo}

\begin{proof}Let $F: \tilde M\to M$ be a diffeomorphism. Then   it suffices to show that there is an  automorphism $\Psi$ on $M$ such that
\begin{eqnarray}\label{0203}
(F^{-1})^*{ \tilde J}=\Psi^*J, ~{\rm on}~ M.
\end{eqnarray}
Consider the $G\times G$-action on $M$, which is induced by the one on $ \tilde M$. Namely, for any $x\in \tilde M$, it holds
\begin{eqnarray}\label{0204}
{g\cdot F}(x)=F({g\cdot x}),~\forall ~g\in G\times G.
\end{eqnarray}
 Thus $J'=(F^{-1})^*\tilde J$ is also a $G\times G$-invariant integral almost complex structure on $M$ and it induces another complex structure on $G$.

Choose a base point $x_0\in\mathcal O$.
Since $G$ is a $2n$-dimensional real Lie group (denoted by $G_{\mathbb R}$) with an adjoint representation $\text{ad}_{\mathfrak g_{\mathbb R}}(\cdot)$ of $\mathfrak g_{\mathbb R}$ on itself, we have
$$J'(x_0)\in{\text{End}}'(\mathfrak g_{\mathbb R})=\{\sigma| ~\sigma\in {\text{End}}(\mathfrak g_{\mathbb R})\text{ and } \sigma(\text{ad}_XY)=\text{ad}_X(\sigma (Y)),~\forall ~X,Y\in{\mathfrak g_{\mathbb R}} \}.$$
On the other hand, the semisimple complex Lie algebra $\mathfrak g$ of $(G,J)$ can be decomposed into irreducible ideals $\mathfrak s_i$ of $\mathfrak g$, $i=1,...,l$, such that
$$\mathfrak g=\oplus_i\mathfrak s_i,$$
with $[\mathfrak s_i,\mathfrak s_j]=\delta_{ij}\mathfrak s_i$.
Then it is easy to see that
$${\text{End}}'(\mathfrak g_{\mathbb R})=\oplus_i{\text{End}}_i'(\mathfrak s_{i\mathbb R}), $$
where
$${\text{End}}_i'(\mathfrak s_{i\mathbb R})=\{\sigma| ~\sigma\in {\text{End}}(\mathfrak s_{i\mathbb R})\text{ and } \sigma(\text{ad}_XY)=\text{ad}_X(\sigma (Y)),~\forall ~X,Y\in{\mathfrak s_{i\mathbb R}} \}.$$
Note that each $\mathfrak s_i$ is a complex irreducible representation by $\text{ad}_{\mathfrak s_i}(\cdot)$ on $\mathfrak s_i$. Thus
$$\dim_{\mathbb C}{\text{End}}_i'(\mathfrak s_{i})=1. $$
As a consequence,
$\dim_{\mathbb R}{\text{End}}'(\mathfrak s_{i\mathbb R})=2$, which can be spanned by $\text{Id}$ and $J|_{\mathfrak s_{i\mathbb R}}$. Hence, $J'|_{\mathfrak s_{i\mathbb R}}=\lambda_i \text{Id}+\mu_i J|_{\mathfrak s_{i\mathbb R}}$ for some $\lambda_i,\mu_i\in\mathbb R$, and
$$J'|_{\mathfrak s_{i\mathbb R}}^2=(\lambda_i^2-\mu_i^2)\text{Id}+2\lambda_i\mu_i J|_{\mathfrak s_{i\mathbb R}}.$$
By the fact $J'^2=-\text{Id}$, it follows  that that $\lambda_i=0,\mu_i=\pm1$. Therefore,  we prove  that
\begin{eqnarray}\label{split}
J'(x_0)= \oplus \mu_i J|_{\mathfrak s_{i\mathbb R}}(x_0),
\end{eqnarray}
where $\mu_i=1, ~{\rm or}~-1.$

By \cite{AB1, AB2},   for the $G$-manifold $(M, J)$ there is an $r$-dimensional toric complex submanifold $(Z,J|_Z)$ through $x_0$ associated to a maximal torus $T^\mathbb C$ of $G$. Similarly,  there is another $r$-dimensional toric complex submanifold $(Z',J'|_{Z'})$ through $x_0$ associated to  a maximal torus  of the induced  $G$-action by (\ref{0204}).
 Moveover, by (\ref{split}), we have
\begin{align}
J'|_{TZ'} (x_0)= \oplus \mu_i J|_{\mathfrak t_{i\mathbb R}},\notag
\end{align}
where $\mathfrak t_{i\mathbb R}=\mathfrak s_{i\mathbb R}\cap\mathfrak t_{\mathbb R}$ which is non-empty for each $i$.
Thus there is a decomposition of $\mathfrak t_\mathbb R$ such that
\begin{eqnarray}\label{split-3}
\begin{aligned}
J'|_{TZ'} (x_0)&= (\oplus_{i=1}^{r_1} (-J)|_{\mathfrak t_{i\mathbb R}})\oplus(\oplus_{i=1}^{r_2} J|_{\mathfrak t_{i\mathbb R}})\\
&=(-J)|_{\mathfrak t^m_\mathbb R}\oplus J|_{\mathfrak t^{r-m}_\mathbb R},
\end{aligned}
\end{eqnarray}
where $\mathfrak t^m_\mathbb R$ and $\mathfrak t^{r-m}_\mathbb R$ are two Lie subalgebras of $\mathfrak t_\mathbb R$ with dimensions $m$ and $(r-m)$, respectively. Note that $Z|_{T^\mathbb C x_0}=Z'|_{T^\mathbb C x_0}$.  Hence,  by the completeness in the same ambient space $M$, we get
 \begin{align}\label{conjugate-Z}Z=\overline{Z|_{T^\mathbb C z_0}}=\overline{Z'|_{T^\mathbb C z_0}}=Z'\subset M.
 \end{align}
 By Lemma \ref{uniqueness of splitting-structure}, we prove that $(Z,J|_Z)$ and$(Z,J'|_{Z})$ are bi-holomorphic.

 Let  $P$  and $P'$  be two associated polytopes of  $(Z,K^{-1}_M|_Z,J|_Z)$ and   $(Z,K^{-1}_M|_{Z},J'|_{Z})$ as in Subsection 2.2, respectively.
  Since both of $m$-multiple bundles of $K^{-1}_M$ and $K^{-1}_{\tilde M}$ can be regarded as a restricted line bundle of $K^{-1}_{\mathbb CP^N}$ by the Kodaira embedding as in Section 4 for $M_i$ and $M_\infty$,
   $$(Z, K^{-m}_M|_Z, J|_Z)=(Z, K^{-1}_{\mathbb CP^N}|_Z),~
   (Z, K^{-m}_M|_{Z}, J'|_Z)=(F^{-1}(Z), K^{-1}_{\mathbb CP^N}|_{F^{-1}(Z)}).$$
   It follows that the curvatures of $(Z, K^{-1}_M|_Z, J|_Z)$ and $(Z, K^{-1}_M|_{Z}, J'|_Z)$ can be induced by  the Fubuni-Study of $\mathbb CP^N$ and so their  cohomology classes  on $Z$ are  same.  Thus  $P$ and $P'$ as the moment images of curvature forms  of the above line bundles are isomorphic.
Hence, by the equivariant classification theory \cite[Section 2]{AK}, two  polarized compactifications   $(M, K^{-1}_M,J)$ and $(M, K^{-1}_M,J')$   are different from  a $G\times G$-equivariant morphism.  Therefore,  $(M, J')$ must be bi-holomorphic to $(M, J)$.  Namely there is an  automorphism $\Psi$ on $M$ such that (\ref{0203}) holds. The theorem is proved.

\end{proof}

 Theorem \ref{unique-complexstructure} can be also proved  without using  the equivariant classification theory \cite[Section 2]{AK}. In fact, we can give a direct construction of  automorphism $\Psi$ by Lemma \ref{uniqueness of splitting-structure} and the Cartan involution in the following.

   Let $S_i$ be the subgroup of $G$ with Lie algebra $\mathfrak s_i$. Then each $S_i$ is semisimple and
\begin{align}\label{razlozhenie-G}
G=\prod_{i=1}^lS_i\slash {\rm diag}(\cap_iS_i),
\end{align}
where $\cap_iS_i$ is  a finite group (cf. \cite[Section 3.2]{Zhelobenko-Shtern}).
Fix a maximal compact subgroup $K_i$ in each $S_i$ and let  $\Theta_i$ be  the  Cartan involution on $S_i$  which acts trivially on $K_i$ and inverses  the complex structure of $S_i$.  We may choose $K_i$ so that $\cap_iK_i$ contains the finite group $\cap_i S_i$.  Note that    there are  $s_i\in S_i$ for any $g\in G$ such that $g=s_1\cdot...\cdot s_l$ by (\ref{razlozhenie-G}).  Thus we can define an automorphism on $G$ by
\begin{align}\label{razlozhenie-Theta}
\Theta(g)=\Theta^{\epsilon_1}_1(s_1)\cdot...\cdot\Theta^{\epsilon_l}_l(s_l),
\end{align}
where $\epsilon_i=1$ if $\mu_i=-1$,  and $\epsilon_i=0$ if $\mu_i=1$.  Since  each $s_i$ is uniquely determined up to multiplying an element of $\cap_iS_i$, which is fixed under $\Theta_i$, $\Theta$ is well-defined. Moreover, we have
 \begin{align}\label{conjugate-J}\Theta^*J=J'.
 \end{align}

It suffices to show that $\Theta$ can be extended to a diffeomorphism on  $M$. By Lemma \ref{uniqueness of splitting-structure}, we know  that $(Z,J|_Z)$ and $(Z,J'|_{Z})$ are bi-holomorphic.  Note that  the restriction of $\Theta$ on $T^\mathbb C$ is just $\Phi$ by\eqref{tor-diff}. Thus  $\Theta$ can be extended to a diffemorphism on $Z$ by
\begin{align}\label{extend-Theta}\Theta(z)=\Phi(z),~\forall z\in Z.
\end{align}
Moreover, $\Theta|_Z$ commutes with the $W$-action.

On the other hand, by a generalized KAK-decomposition of $G$-compactification (cf. \cite[Section 3.4]{Timashev-survey} or \cite[Section 9]{Timashev-Sbo}), for any $x\in M$, there are $k_1,k_2\in K$ and $z\in Z$ so that $x=(k_1,k_2)z$. Moreover, $z$ is uniquely determined up to a $W$-action. Since $\Theta$ commutes with the $W$-action and $\Theta|_K$ is trivial, the following $$\Theta(x)=(k_1,k_2)\Theta(z)$$
is well-defined by (\ref{extend-Theta}). Thus we can extend $\Theta$ to a diffeomorphism $\Psi$ on $M$ so that (\ref{0203}) holds by (\ref{conjugate-J}).  Hence, we also prove  Theorem \ref{unique-complexstructure}.

As a corollary   of Theorem \ref{unique-complexstructure}, we immediately get

\begin{prop}\label{bi-holo} The limit $(M_\infty, J_\infty)$ in Proposition \ref{g-structure} is bi-holomorphic to $(M, J)$ whenever $G$ is semisimple.
\end{prop}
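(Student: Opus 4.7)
The plan is to construct a $G\times G$-equivariant biholomorphism $\Psi:M\to M_\infty$, first on the open dense orbit by choosing compatible base points, then extending across the boundary using rigidity that is available only in the semisimple case.

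First, fix $x_0\in\mathcal O$ and let $\hat x_{0,\infty}=\lim_i\Phi_i(x_0)\in\hat{\mathcal O}_\infty^0$ be its limit under the Kodaira embeddings from the proof of Proposition \ref{g-structure}; set $x_{0,\infty}:=\Phi_\infty^{-1}(\hat x_{0,\infty})\in\mathcal O_\infty$. The conclusion $\Gamma=\{\mathrm{Id}\}$ at the end of the proof of Proposition \ref{g-structure} tells us that $G$ acts freely (and transitively) at $\hat x_{0,\infty}$, so both $\mathcal O$ and $\mathcal O_\infty$ are principal $G\times G$-homogeneous spaces. The orbit maps at $x_0$ and $x_{0,\infty}$ give canonical $G\times G$-equivariant biholomorphisms $\mathcal O\cong G\cong\mathcal O_\infty$, which compose to
\[
\Psi:\mathcal O\to\mathcal O_\infty,\quad (g_1,g_2)\cdot x_0\mapsto (g_1,g_2)\cdot x_{0,\infty}.
\]

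Next, extend $\Psi$ across the boundary. Using the Kodaira embeddings together with the common inclusion $G\times G\hookrightarrow PGL(N+1,\mathbb C)$ from the proof of Proposition \ref{g-structure}, both $\hat M=\Phi_i(M)$ (in the limit) and $\hat M_\infty=\Phi_\infty(M_\infty)$ appear as normal projective $G\times G$-stable subvarieties of $\mathbb CP^N$, each being the Zariski closure of the $G\times G$-orbit of its base point. Because $\Psi$ sends $x_0$ to $x_{0,\infty}$ equivariantly and both images realize the same linearized $G\times G$-representation in $PGL(N+1,\mathbb C)$, $\Psi$ extends (by the uniqueness of equivariant extensions of rational maps between normal projective $G\times G$-varieties with matching boundary orbit structure) to a $G\times G$-equivariant biholomorphism $\hat\Psi:\hat M\to\hat M_\infty$. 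Passing back through $\Phi_\infty$ produces the desired biholomorphism $M\cong M_\infty$.

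The semi-simplicity of $G$ enters precisely to guarantee that such a compactification is rigid. The Kähler class $2\pi c_1(M)$ is preserved along the flow, so the moment polytope $P$ associated to $(M,K_M^{-1})$ is the same for $M$ and $M_\infty$. By the classification of polarized $G$-compactifications (cf.\ \cite{AB1,AB2,Del1}), such a variety is determined up to equivariant isomorphism by its polytope together with discrete combinatorial data. For semisimple $G$ the center $Z(G)$ is finite, so there is no continuous moduli of non-isomorphic $G\times G$-equivariant compactifications sharing the same polytope and underlying diffeomorphism type; combined with Proposition \ref{g-structure}, this forces $\hat M=\hat M_\infty$ as subvarieties of $\mathbb CP^N$.

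The main obstacle is the extension step: a priori, the equivariant map $\Psi$ on the open orbit could fail to extend holomorphically because the boundary strata of $M$ and $M_\infty$ might sit differently inside $\mathbb CP^N$ in the limit. Semi-simplicity rules out this ``jumping'' of complex structures; in the merely reductive case the non-trivial center can indeed produce such jumps, which is exactly the mechanism discussed in the paragraph following Theorem \ref{SO(4)} and in Section 5. Making the extension rigorous will likely require either invoking the Luna--Vust classification of spherical embeddings or an explicit analysis of the normal bundles of the boundary orbits, using the fact that all relevant holomorphic vector fields on $M_\infty$ generated by $\mathfrak g$ have been identified with their counterparts on $M$ via Proposition \ref{vector-limit}.
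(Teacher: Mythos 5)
Your proposal has a genuine gap at its central step. After identifying the open orbits $\mathcal O\cong G\cong\mathcal O_\infty$ (which is fine once Proposition \ref{g-structure} is granted), everything hinges on showing that the equivariant identification of open orbits extends to the compactifications. But two Fano compactifications of the same group $G$ always have biholomorphic open orbits and still need not be isomorphic -- the three $\mathrm{SO}_4(\mathbb C)$-compactifications of Section 5 are exactly such examples -- so an appeal to ``uniqueness of equivariant extensions of rational maps between compactifications with matching boundary orbit structure'' assumes precisely what must be proved, namely that the boundary data (equivalently the polytope/colored fan) of $M_\infty$ agrees with that of $M$. Your attempted justification, that the K\"ahler class $2\pi c_1(M)$ is preserved and hence the moment polytope is the same, is not valid: the polytope is read off from the algebraic structure of the compactification (the $T^{\mathbb C}$-stable boundary divisors and the linearization), which depends on $J_\infty$, not merely on a de Rham cohomology class on the underlying smooth manifold. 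Ruling out a jump of the polytope is the content of the proposition, and the paper itself signals that this is nontrivial by offering, for the specific examples, the alternative argument that distinct compactifications have distinct polytope volumes. Likewise, ``$Z(G)$ finite, hence no continuous moduli'' does not exclude a discrete jump to a different compactification with the same diffeomorphism type, which is the actual danger.

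For comparison, the paper takes a different and more local route in which semisimplicity enters through representation theory rather than classification: pulling $J_\infty$ back by a diffeomorphism gives a second $G\times G$-invariant integrable complex structure $J'$ on the same manifold $M$; at a point of the open orbit, invariance places $J'$ in the commutant of the adjoint representation of $\mathfrak g_{\mathbb R}$, and Schur's lemma for the simple ideals $\mathfrak s_i$ forces $J'=\pm J$ on each $\mathfrak s_{i\mathbb R}$ (this is exactly where semisimplicity is used, and where a torus factor would allow genuinely new invariant structures). The remaining work is Lemma \ref{uniqueness of splitting-structure}: on the associated toric submanifold $Z$, a structure of the form $\mathrm{diag}(-J|_{T^m},J|_{T^{r-m}})$ that extends smoothly over the boundary forces a product decomposition and is biholomorphic to $J$ via conjugation on the first factor; combined with the fact that the complex structure of a $G$-compactification is determined by its toric slice, this yields $(M,J')\cong(M,J)$. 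If you want to salvage your approach, you would need an actual argument that the polytope (or colored fan) of $(M_\infty,J_\infty)$ coincides with that of $(M,J)$ -- for instance via the Luna--Vust machinery together with the constraint of a common underlying smooth structure -- and that is essentially as hard as the proposition itself.
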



By Proposition \ref{g-structure} and Proposition  \ref{bi-holo}, we are able to finish the proof of Theorem \ref{singular-type2}.

\begin{proof}[Proof of Theorem \ref{singular-type2}]
By the definition, if the solution of K\"ahler-Ricci flow (\ref{kahler-Ricci-flow}) has only type I singularities, then the curvature of $\omega(t)$
is uniformly bounded. Thus there is a subsequence $\{\omega(t_i)\}$ which converges to a
limit of K\"ahler-Ricci soliton $(M_\infty, \omega_\infty,J_\infty)$ in Cheeger-Gromov topology.
Note that the center of Lie algebra of the reductive part of ${\rm Aut}(M_\infty)$ is trivial by Proposition \ref{g-structure} since $G$ is semisimple. Thus $(M_\infty, \omega_\infty,J_\infty)$ must be a K\"ahler-Einstein metric. On the other hand, by Proposition \ref{bi-holo}, $(M_\infty, J_\infty)$ is biholomorphic to $(M,J)$, which admits no K\"ahler-Einstein metric by the assumption in the theorem. Hence, we get  a contradiction. As a consequence, the curvature of $\omega(t)$ must blow-up as $t\to \infty$. Namely, the solution of flow is of type II.

There is another way to prove Theorem \ref{singular-type2} without using Proposition  \ref{bi-holo}  if in addition we know that $M$ is $K$-unstable. In fact, the limit $(M_\infty, \omega_\infty,J_\infty)$ is a K\"ahler-Einstein metric if the solution of flow is of type I. Then by a result in \cite{CS} (also see \cite[Lemma 7.1]{TZ4}), the $K$-energy is bounded
below on the space of K\"ahler potentials in $2\pi c_1(M)$. This implies that $(M,J)$ is $K$-semistable \cite{DT, LX}. Thus we get a contradiction. Hence, the curvature of $\omega(t)$ must blow up as $t\to \infty$.

\end{proof}

\section{Examples of $G$-manifolds with rank 2}

In this section, we describe Fano compactifications of $\mathrm{SO}_4(\mathbb{C})$ and $\mathrm{Sp}_4(\mathbb{C})$.

\subsection{ Fano $\mathrm{SO}_4(\mathbb{C})$-manifolds of dimension 6}\label{exa1}

In \cite{De12}, Delcroix computed three polytopes $P_+$ associated to Fano compactifications of {$\mathrm{SO}_4(\mathbb{C})$ \footnote{In fact, by checking the Delzant condition of polytope $P$ and the Fano condition of compactified manifold, these three manifolds $M$ are only Fano compactifications of $\mathrm{SO}_4(\mathbb{C})$.}.} In the following, we write down the detailed data associated to $P_+$, in particular, the values of ${\rm bar}(P_+) $.

\begin{figure}[h]
\begin{center}
\begin{tikzpicture}
\draw [dotted] (0,-3) grid[xstep=1,ystep=1] (3,3);
\draw (0,0) node{$\bullet$};
\draw (2,0) node{$\bullet$};
\draw (1.6,0.3) node{$2\rho$};
\draw [semithick] (3,3) -- (0,0) -- (3,-3) -- (3,3);
\draw (2.7,2.3) node{$P_+$};
\draw (0.7,2.7) node{(1)};
\draw [very thick, -latex] (0,0) -- (1,-1);
\draw [very thick, -latex] (0,0) -- (1,1);
\end{tikzpicture}
\end{center}
\caption{}
\label{BarX1-2-4}
\end{figure}

Choose a coordinate on $\mathfrak a^*$ such that the basis are the generator of $\mathfrak M$. Then the positive roots are
\begin{eqnarray*}
\alpha_1=(1,-1),~\alpha_2=(1,1),
\end{eqnarray*}
and
\begin{eqnarray*}
2\rho=(2,0).
\end{eqnarray*}
Thus
\begin{eqnarray*}
&\mathfrak a_+^*=\{x>y>-x\},\\
&2\rho+\Xi=\{-2+x>y>2-x\},
\end{eqnarray*}
and
\begin{eqnarray*}
\pi(x,y)=(x-y)^2(x+y)^2.
\end{eqnarray*}

(A)-\textbf{Case (1).} There is one smooth Fano compactification of
$\mathrm{SO}_4(\mathbb{C})$, which admits a K\"ahler-Einstein metric.
The polytope $P_+$ is given by (See Figure~\ref{BarX1-2-4}),
\begin{align}\label{caseso1}
P_+=\{y>-x,x> y, 2-x>0,2+y>0\}.
\end{align}
A direct computation shows that  ${\rm vol}(P_+)=\frac{648}5$ and
\begin{eqnarray*}
{\rm bar}(P_+)=\left(\frac{18}7,0\right).
\end{eqnarray*}
Then
$$\label{bar}
{\rm bar}(P_+)\in 2\rho+\Xi$$
which implies (\ref{bar-1}). Thus by Theorem \ref{de}, the $\mathrm{SO}_4(\mathbb{C})$-manifold associated to $P_+$ in (\ref{caseso1}) admits a K\"ahler-Einstein metric.

(B) There are two smooth Fano compactifications of $\mathrm{SO}_4(\mathbb{C})$ with no K\"ahler-Einstein metrics.
Both of $P_+$ (see Figure~\ref{BarX1-2}) do not satisfy (\ref{bar-1}). Moreover, The
Futaki invariant vanishes since the center of
automorphisms group are finite. Hence there are also no K\"ahler-Ricci solitons on the compactifications.

\begin{figure}[h]
\begin{center}
\begin{tikzpicture}
\draw [dotted] (0,-2) grid[xstep=1,ystep=1] (3,3);
\draw (0,0) node{$\bullet$};
\draw (2,0) node{$\bullet$};
\draw (1.6,0.3) node{$2\rho$};
\draw [semithick] (3,3) -- (3,0) -- (3/2,-3/2) -- (0,0) -- (3,3);
\draw (2.7,2.3) node{$P_+$};
\draw (0.7,2.7) node{(2)};
\draw [very thick, -latex] (0,0) -- (1,-1);
\draw [very thick, -latex] (0,0) -- (1,1);
\end{tikzpicture}
\begin{tikzpicture}
\draw [dotted] (0,-2) grid[xstep=1,ystep=1] (3,3);
\draw (0,0) node{$\bullet$};
\draw (2,0) node{$\bullet$};
\draw (1.6,0.3) node{$2\rho$};
\draw [semithick] (3,3) -- (3,1) -- (2,-1) -- (3/2,-3/2) --(0,0) -- (3,3);
\draw (2.7,2.3) node{$P_+$};
\draw (0.7,2.7) node{(3)};
\draw [very thick, -latex] (0,0) -- (1,-1);
\draw [very thick, -latex] (0,0) -- (1,1);
\end{tikzpicture}
\end{center}
\caption{}
\label{BarX1-2}
\end{figure}

\textbf{Case (2).} The polytope is
\begin{eqnarray*}
P_+=\{y>-x,x> y, 2-x>0,2+y>0,3-x+y>0\}.
\end{eqnarray*}
Then   ${\rm vol}(P_+)=  \frac{1701}{20}$ and  the barycenter is
\begin{eqnarray*}
{\rm bar}(P_+)=\left(\frac{489}{196},\frac{15}{28}\right).
\end{eqnarray*}
Thus
$$\label{bar}
{\rm bar}(P_+)\not\in \overline{2\rho+\Xi}$$
and consequently, there is no K\"ahler-Einstein metric in \textbf{Case (2)}.

\textbf{Case (3).} The polytope is
\begin{eqnarray*}
P_+=\{y>-x,x> y, 2-x>0,2+y>0,3-x+y>0,5-2x+y>0\}.
\end{eqnarray*}
Then    ${\rm vol}(P_+)=   \frac{10751}{180}$ and   the barycenter is
\begin{eqnarray*}
{\rm bar}(P_+)=\left(\frac{102741}{43004},\frac{16575}{23156}\right).
\end{eqnarray*}
Thus
$$\label{bar}
{\rm bar}(P_+)\not\in \overline{2\rho+\Xi}$$
and consequently, there is no K\"ahler-Einstein metric in \textbf{Case (3)}.

\subsection{ Fano $\mathrm{Sp}_4(\mathbb{C})$-manifolds of dimension 10}\label{exa2}
In \cite{Del1} Delcroix computed two polytopes $P_+$ associated to toroidal Fano compactifications of $\mathrm{Sp}_4(\mathbb{C})$ (see Cases (1) and (3) below). By the tables listed in \cite{Ruzzi}, one can check that there are in total three smooth Fano compactifiactions.
We give the data in the following. The positive roots are
\begin{eqnarray*}
\alpha_1=(1,-1),\alpha_2=(2,0),\alpha_3=(1,1),\alpha_4=(4,2).
\end{eqnarray*}
Consequently, $\rho=(2,1)$,
$$2\rho+\Xi=\{y>6-x,x>4\}$$
and $\pi(x,y)=16(x-y)^2(x+y)^2x^2y^2$.

(A) There are two smooth Fano compactification which admit K\"ahler-Einstein metrics (see Figure \ref{BarX3}).
\begin{figure}[h]
\begin{center}
\begin{tikzpicture}
\draw [dotted] (0,-1) grid[xstep=1,ystep=1] (5,5);
\draw (0,0) node{$\bullet$};
\draw (4,2) node{$\bullet$};
\draw (4.2,2.2) node{$2\rho$};
\draw [semithick] (0,0) -- (5,0) -- (5,2) -- (7/2,7/2) -- (0,0);
\draw (4.7,3.3) node{$P_+$};
\draw (0.7,2.7) node{(1)};
\draw [very thick, -latex] (0,0) -- (1,-1);
\draw [very thick, -latex] (0,0) -- (0,2);
\draw [very thick, -latex] (0,0) -- (1,1);
\draw [very thick, -latex] (0,0) -- (2,0);
\draw [very thick, -latex] (0,0) -- (4,2);
\end{tikzpicture}
\begin{tikzpicture}
\draw [dotted] (0,-1) grid[xstep=1,ystep=1] (5,5);
\draw (0,0) node{$\bullet$};
\draw (4,2) node{$\bullet$};
\draw (4.2,2.2) node{$2\rho$};
\draw [semithick] (0,0) -- (5,0) -- (5,5) -- (0,0);
\draw (4.7,3.3) node{$P_+$};
\draw (0.7,2.7) node{(2)};
\draw [very thick, -latex] (0,0) -- (1,-1);
\draw [very thick, -latex] (0,0) -- (0,2);
\draw [very thick, -latex] (0,0) -- (1,1);
\draw [very thick, -latex] (0,0) -- (2,0);
\draw [very thick, -latex] (0,0) -- (4,2);
\end{tikzpicture}
\end{center}
\caption{}
\label{BarX3}
\end{figure}

\textbf{Case (1).} The polytope $P_+$ is given by
\begin{align*}
P_+=\{y>0,x> y, 5-x>0,7>x+y\}.
\end{align*}
A direct computation shows that  ${\rm vol}(P_+)=   \frac{31702283}{1400}$ and
\begin{eqnarray*}
{\rm bar}(P_+)=\left(\frac{456413622265}{104829824704},\frac{186115662215}{104829824704}\right)\in 2\rho+\Xi,
\end{eqnarray*}
which implies (\ref{bar-1}). Thus by Theorem \ref{de}, the $\mathrm{Sp}_4(\mathbb{C})$-manifold associated to $P_+$ in \textbf{Case (1)} admits a K\"ahler-Einstein metric.

\textbf{Case (2).} The polytope $P_+$ is given by
\begin{align*}
P_+=\{y>0,x> y, 5>x\}.
\end{align*}
A direct computation shows that   ${\rm vol}(P_+)=   \frac{1562500}{21}$ and
\begin{eqnarray*}
{\rm bar}(P_+)=\left(\frac{50}{11},\frac{875}{352}\right)\in 2\rho+\Xi.
\end{eqnarray*}
Hence there admits a K\"ahler-Einstein metric in \textbf{Case (2)}.

(B)-\textbf{Case (3).} There is one smooth Fano compactification which does not admit K\"ahler-Einstein metrics (see Figure \ref{BarX4}).
\begin{figure}[h]
\begin{center}
\begin{tikzpicture}
\draw [dotted] (0,-1) grid[xstep=1,ystep=1] (5,5);
\draw (0,0) node{$\bullet$};
\draw (4,2) node{$\bullet$};
\draw (4.2,2.2) node{$2\rho$};
\draw [semithick] (0,0) -- (5,0) -- (5,1) -- (4,3) -- (7/2,7/2) -- (0,0);
\draw (4.7,3.3) node{$P_+$};
\draw (0.7,2.7) node{(3)};
\draw [very thick, -latex] (0,0) -- (1,-1);
\draw [very thick, -latex] (0,0) -- (0,2);
\draw [very thick, -latex] (0,0) -- (1,1);
\draw [very thick, -latex] (0,0) -- (2,0);
\draw [very thick, -latex] (0,0) -- (4,2);
\end{tikzpicture}
\end{center}
\caption{}
\label{BarX4}
\end{figure}

The polytope $P_+$ is given by
\begin{align*}
P_+=\{y>0,x> y, 5-x>0,7>x+y,11>2x+y\}.
\end{align*}
A direct computation shows that  ${\rm vol}(P_+)=   \frac{148906001}{4200}$ and
\begin{eqnarray*}
{\rm bar}(P_+)=\left(\frac{278037566905}{66955221696},\frac{111498923355}{66955221696}\right)\not\in \overline{2\rho+\Xi}.
\end{eqnarray*}
Hence there does not admit a K\"ahler-Einstein metric in \textbf{Case (3)}.

Two $\mathrm{SO}_4(\mathbb{C})$-manifolds in Section \ref{exa1} (B-Cases (2), (3)) and one $\mathrm{Sp}_4(\mathbb{C})$-manifold in Section \ref{exa2} (B-Case (3)) are those examples described as in
Theorem \ref{SO(4)}. Moreover, these three Fano manifolds are all $K$-unstable.

\subsection{Remarks on Theorem \ref{singular-type2} and  Theorem \ref{SO(4)}}
 By Theorem  \ref{singular-type2} and the Hamilton-Tian conjecture \cite{T1, TZhzh, Bam, Chwang, WZ20}, the  K\"ahler-Ricci flow (\ref{kahler-Ricci-flow})
  will converge to  a $Q$-Fano variety $\tilde M_\infty$  with a singular  K\"ahler-Ricci soliton.  One may expect  that  $\tilde M_\infty$ is a
$\mathbb Q$-Fano compactification of $G$ by extending the argument in the proof of Proposition \ref{g-structure}.  Unfortunately, it is not true in general. In fact, in a sequel of  paper \cite{LTZ2},  we prove

\begin{theo}\label{LTZ3}
There is no $\mathbb Q$-Fano compactification of $SO_4(\mathbb C)$ which admits a singular K\"ahler-Einstein metric with  the same volume  as in
Section \ref{exa1} (B-Cases (2), (3)).
\end{theo}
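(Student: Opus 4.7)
The plan is to classify $\mathbb{Q}$-Fano compactifications of $SO_4(\mathbb{C})$ via their associated moment-type polytopes and then match the volume and K-stability constraints combinatorially. More precisely, by the Luna--Vust type description of bi-equivariant compactifications of a reductive group used throughout the paper (and already invoked to write down the smooth Fano polytopes in Section~5), any $\mathbb{Q}$-Fano $SO_4(\mathbb{C})$-compactification $M$ is determined by a rational polytope $P \subset \mathfrak{a}^*$, and its positive part $P_+$ is cut out by the Weyl walls together with a finite family of rational supporting hyperplanes coming from the $G\times G$-stable prime divisors. The Fano condition forces the defining inequalities to include the half-spaces $\langle\alpha,y\rangle>0$ for $\alpha\in\Phi_+$ and $-2+x\leq 0$, $2+y\geq 0$ together with finitely many additional rational inequalities.

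Next I would invoke the volume formula (cf.\ \cite{Del1, LZZ}),
\[
\mathrm{vol}(M,2\pi c_1(M)) \;=\; n!\int_{2P_+}\pi(y)\,dy,\qquad \pi(y)=(x-y)^2(x+y)^2,
\]
to restrict to those $P_+$ whose volume equals $\tfrac{1701}{20}$ or $\tfrac{10751}{180}$ (the two values computed in Section~\ref{exa1} (B)-Cases (2), (3)). Since the additional supporting hyperplanes have bounded combinatorial complexity (one controls the number of facets using the Delzant-type conditions in the rational setting and the Fano bound $K_M^{-1}$ which pins down the outer normals to lie in the lattice generated by roots and fundamental weights), this yields a finite explicit list of candidate polytopes $P_+$.

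The decisive ingredient is the $\mathbb{Q}$-Fano analogue of Delcroix's theorem: a $\mathbb{Q}$-Fano $G$-compactification admits a singular K\"ahler--Einstein metric if and only if it is $K$-stable, which by the barycenter criterion (extending \cite{Del1, LZZ} to the singular case, as carried out via the Weyl-invariant piece-wise rationally linear test configurations of Section~2) is equivalent to
\[
bar(2P_+)\;\in\;4\rho+\Xi.
\]
For each candidate $P_+$ in the finite list produced above, I would then compute $bar(2P_+)$ by the same integral formulas used in Section~5 and verify that it does not lie in $4\rho+\Xi$, thereby ruling out the existence of a singular KE metric.

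The main obstacle is the finiteness and enumeration step: one must argue that, once the volume is fixed to one of these two rational values, only finitely many rational polytopes $P_+$ with root-theoretic outer normals can arise, and then carry out a case-by-case check. This is where one has to use the specific combinatorics of $SO_4(\mathbb{C})$ (rank two, with root system $A_1\times A_1$) together with the bound on facet normals imposed by the $\mathbb{Q}$-Fano condition $-K_M$ ample. A secondary technical point is to make rigorous the $\mathbb{Q}$-Fano version of Delcroix's barycenter criterion for singular KE metrics; this should follow from the properness/$K$-stability equivalence in the singular setting combined with the test configurations constructed from Weyl-invariant rational piecewise linear functions on $P_+$ as in \cite{LZZ, De12}.
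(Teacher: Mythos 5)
You should first note that this paper does not prove Theorem \ref{LTZ3} at all: it is quoted in Section 5.3 as a result of the sequel paper \cite{LTZ2}, so there is no in-paper argument to compare yours against; your outline (classify $\mathbb{Q}$-Fano $SO_4(\mathbb{C})$-compactifications by their polytopes, match the weighted volume, rule out K\"ahler--Einstein metrics by the barycenter criterion) is the natural route and presumably the one taken there. As written, however, it has a genuine gap at its ``decisive ingredient'': the statement that a $\mathbb{Q}$-Fano group compactification admits a singular K\"ahler--Einstein metric if and only if $bar(2P_+)\in 4\rho+\Xi$ is not a routine extension of Theorem \ref{de}; it is essentially the main content of \cite{LTZ2}. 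For the present theorem you need at least the necessity direction (singular KE $\Rightarrow$ barycenter condition), which in the singular setting requires Berman-type K-polystability of K\"ahler--Einstein $\mathbb{Q}$-Fano varieties together with the computation of the Donaldson--Futaki (or Ding) invariant of the non-product test configurations built from Weyl-invariant piecewise rationally linear functions on the singular variety --- note that for these $SO_4(\mathbb{C})$ examples the classical Futaki character vanishes, so product configurations detect nothing. Declaring that this ``should follow from the properness/K-stability equivalence'' leaves exactly the hard analytic/algebraic step unproved.

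The second gap is the finiteness and enumeration step, which you yourself flag but do not resolve, and which is compromised by an incorrect claim: the Fano condition does \emph{not} force the defining inequalities to include $2-x\ge 0$ and $2+y\ge 0$; those are facets of the particular smooth polytopes of Section \ref{exa1}, while a general $\mathbb{Q}$-Fano compactification may have facets with arbitrary primitive outer normals (only the supporting level of each non-Weyl-wall facet is pinned by $-K_M$ once its normal is fixed, and $W$-invariance relates facets in pairs). Hence the asserted ``finite explicit list'' of rational polytopes with weighted volume $\tfrac{1701}{20}$ or $\tfrac{10751}{180}$ is unsupported: fixing $\int_{2P_+}\pi(y)\,dy$ does not by itself bound the number of facets or the size of their normals, and one must combine the anticanonical structure of the polytope, $W$-invariance, convexity and the fixed volume to bound the admissible normals before any case-by-case barycenter check can begin. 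Until both of these points are supplied, the proposal is a plan rather than a proof.
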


By the Hamilton-Tian conjecture,  the limit $M_\infty$  of (\ref{kahler-Ricci-flow}) will   preserve the
volume (also see \cite[Theorem 1.1]{ WZ20}).  Thus if $M_\infty$  is a
$\mathbb Q$-Fano compactification of $SO_4(\mathbb C)$ in case of  $G=SO_4(\mathbb C)$, there will be a contradiction with  Theorem \ref{LTZ3}.
 Theorem \ref{LTZ3} implies that the limit soliton
will has less symmetry than the original one, which is totally different to the situation of smooth convergence as in Proposition   \ref{g-structure}.
However, it is still interesting in understanding the $\mathbb Q$-Fano structure of   $M_\infty$.

 Although  we shall assume  that metrics  $(M,\omega_i, J)$ are all $K\times K$-invariant   in the proofs of both of  Proposition  \ref{g-structure} and Proposition \ref{bi-holo}, the  $K\times K$-invariant condition for the
 initial metric $\omega_0$ in Theorem \ref{singular-type2} can be removed   by using a recent result for  the uniqueness of limits of K\"ahler-Ricci flow  with varied initial metrics in \cite{WangZ2, HL}.  In fact, we have

  \begin{theo}\label{singular-type2-2} Let $G$ be a complex semisimple Lie group and
$M$ a Fano $G$-manifold which admits no K\"ahler-Einstein metrics.  Then any solution of K\"ahler-Ricci flow (\ref{kahler-Ricci-flow}) on $M$ with an  initial metric $\omega_0\in 2\pi c_1(M)$ is of type II.

\end{theo}

 \begin{proof} By Theorem \ref{singular-type2}, we claim that the $Q$-Fano variety limit   $\tilde M_\infty$  of   flow $(M, \omega(t))$  with a $K\times K$-invariant initial metric  in  the Hamilton-Tian conjecture  is a singular variety.  In fact,  on contrary, by the partial $C^0$-estimate in \cite{WangZ2},
 the Kodaira images $\tilde M_t$  in $\mathbb CP^N$ associated to $\omega(t)$ will smoothly converge to  $\tilde M_\infty$. Then  we get the estimates
 (\ref{induced-metrics}) and (\ref{higher-regularity})
 for metrics $\omega(t)$ as in Lemma \ref{metric-equiv}. In particular, the curvature of $\omega(t)$ is uniformly bounded, which is contradict with Theorem \ref{singular-type2}!

 On the other hand,  by \cite{WangZ2, HL},  the singular $Q$-Fano variety limit   $\tilde M_\infty$  of   (\ref{kahler-Ricci-flow})
  is independent of the choice of initial metric $\omega_0$.  Then by a result \cite[Lemma 6.2]{WangZ2}, the Gromov-Hausdroff limit $(M_\infty, \omega_\infty)$ of any sequence of  flow $(M, \omega(t))$ with any initial metric $\omega_0$  could not be a smooth Riemannian manifold since  $\tilde M_\infty$  is a singular variety. This implies that $(M, \omega(t))$ is of type II.

 \end{proof}

\section{Appendix:  An analytic  proof of Proposition \ref{bi-holo}  by Gang Tian and Xiaohua Zhu}

Fix a point $\hat x_\infty\in \hat{\mathcal O}_\infty^0$ as in Lemma \ref{nondegenerate-vector} and choose a sequence of $x_i\in\mathcal O\subset M$ such that $\hat x_i=\Phi_i(x_i)\in \hat M_i\to \hat x_\infty$. Then by the relation (\ref{a0}), $\psi^i$ converges to a convex function $\psi^\infty$ on $D_\epsilon\subset {\mathfrak a}$ with the property
\begin{align}\label{gradient-limit}\nabla\psi^\infty(0)=\lim_i\nabla\psi^i(x_i),
\end{align}
where  $D_\epsilon$ is a small $\epsilon$ ball centered  at the original with coordinates $(y^1,...,y^r)$  in ${\mathfrak a}$. By the regularity in Lemma \ref{metric-equiv},  $\psi^\infty$ is smooth and it satisfies that
$$\nabla^2\psi^\infty(\frac{\partial}{\partial y^a}, \frac{\partial}{\partial y^b})=\omega_\infty(e_a^\infty, e_b^\infty)= \lim_i \psi_{ab}^i.$$
On the other hand, by the relation (\ref{action def}), it is easy to see that the limit metric $\omega_\infty$ is also  $K\times K$-invariant. Thus by the uniqueness of $K\times K$-invariant functions associated to  $\omega_\infty$,  $\psi^\infty$ (modulo a constant)  can be uniquely  extended to a Weyl-invariant convex function  on ${\mathfrak a}$ such that its associated $K\times K$-invariant function $\Psi_\infty$ on $G$ satisfies
$$\omega_\infty=\sqrt{-1}\partial\bar\partial\Psi_\infty, ~{\rm on}~ G. $$

Recall the associated polytope $P\subset {\mathfrak a}^*$ of $r$-dimensional  torus  complex submanifold $Z$ in $M$ in Section 2.2. Then
$${\rm Im}(\nabla\psi^i)=2P, $$
 which is independent of $i$. We shall prove

\begin{lem}\label{invariant-polytope}
$${\rm Im}(\nabla\psi^\infty)=2P.$$
\end{lem}

\begin{proof}1) ${\rm Im}(\nabla\psi^\infty)\subset 2P.$ This is clear by (\ref{gradient-limit}).  In fact,  for any $\hat x_\infty\in \hat{\mathcal O}_\infty^0$ there is a sequence of $ x_i\in M \to x_\infty=\Phi_\infty^{-1}(\hat x_\infty)$ such that
\begin{align}\label{one-side}\nabla\psi^\infty( x_\infty)=\lim_i\nabla\psi^i(x_i)\in 2P.
\end{align}

2) $ 2P \subset {\rm Im}(\nabla\psi^\infty).$  On contrary, we suppose that there is a point $v\in 2P\setminus \overline{{\rm Im}(\nabla\psi^\infty)}$. Then   we can choose a square set $\Delta$ around $v$ by
$$\Delta=\{v'\in 2P|~|(v')^a-v^a|\le \delta, a=1,...,r\}\subset 2P\setminus \overline{{\rm Im}(\nabla\psi^\infty)}.$$
Let
$$U_i=\{x\in\mathcal O |~ \nabla \psi^i(x)\in \Delta\}.$$
By  Lemma \ref{Hessian},
we get
\begin{align}\label{volume}
{\rm  vol}_{\omega_i}(U_i)=&C_0\int_{B_i} \prod_{\alpha\in\Phi_+}\langle  \alpha, \nabla \psi^i\rangle^2  {\rm det}(\nabla^2\psi^i) dy\notag\\
&=C_0 \int_{\Delta}\pi(y')dy'\ge \delta_0,
\end{align}
where $C_0$ and $\delta_0$ are constants, and $B_i=\{y\in {\mathfrak a}|~\nabla \psi^i(y)\in \Delta\}$.

 We claim that  there are  $\epsilon_0>0$  and  a sequence of $x_i\in U_i$ such that
\begin{align}\label{gap-distance}{\rm dist}(x_i,  M\setminus {\mathcal O})\ge \epsilon_0.
\end{align}
In fact, if (\ref{gap-distance}) is not true, then there is a subsequence of sets $U_{i}$ (still denoted by the same sequence of $U_{i}$  for convenience)  such that
$$ {\rm dist}(x',  M\setminus {\mathcal O})\to 0, ~\forall~ x'\in U_{i}.$$
Then we can choose a sequence of  $\epsilon_{i}$-tubular neighborhood $T_i$ of  $M\setminus {\mathcal O}$ with $\epsilon_i\to 0$ such that
$$U_{i}\subset T_i.$$
Since $(T_i, \omega_i)$ converges to $(DM_\infty, \omega_\infty)$ in the Gromov-Hausdroff topology, where $DM_\infty=\Phi_\infty^{-1}$
\newline $(D\hat M_\infty )$,  ${\rm vol}_{\omega_i}( T_i)$ goes  to zero by the volume convergence theorem of  Colding \cite{Co97}. But this is impossible by  (\ref{volume}).

By (\ref{gap-distance}), we see that
$${\rm dist}_{\omega_i}(B_{\frac{\epsilon_0}{2}}(x_i),  M\setminus {\mathcal O})\ge \frac{\epsilon_0}{4}.$$
Let $ x_\infty$ be the limit of $x_i$. Then $\hat x_\infty\not\in D\hat M_\infty$.  Since  we already know that $ M_\infty$ is a G-manifold, $\hat x_\infty\in  \hat {\mathcal O}_\infty^0$. Now we can use (\ref{one-side}) in the above 1) to  conclude that
$$\lim_i\nabla\psi^i(x_i)\in {\rm Im}(\nabla\psi^\infty).$$
However, this is impossible since  each $\nabla\psi^i(x_i)\not\in \overline{{\rm Im}(\nabla\psi^\infty)}.$

\end{proof}

By the equivariant classification theory \cite[Section 2]{AK}, the  polarized $G$-compactifications   $(M, $
\newline $K^{-1}_M,J)$  is determined by the associated polytope $P$ of $(Z,K^{-1}_{M}|_{Z},J|_{Z})$ as in Subsection 2.2.  Let  $(Z',J_\infty|_{Z'})$ be  an $r$-dimensional toric complex submanifold of $M_\infty$ generated by torus vector fields through a point  $x_\infty\in {\mathcal O}_\infty^0 \subset M_\infty$.
 Then by Lemma \ref{invariant-polytope}, the associated polytope of  $(Z',K^{-1}_{M_\infty}|_{Z'},J_\infty|_{Z'})$ is same as $P$.
Thus  we  prove

\begin{prop}\label{bi-holo-new} The limit $(M_\infty, J_\infty)$ in Proposition \ref{g-structure} is bi-holomorphic to $(M, J)$.
\end{prop}

Proposition \ref{bi-holo-new} removes the assumption that $G$ is semisimple  in Proposition  \ref{bi-holo}, so  we can improve Theorem \ref{singular-type2} (also see  Theorem \ref{singular-type2-2}) as follows.

\begin{theo}\label{singular-type2-new} Let $G$ be a complex reductive  Lie group and
$M$  a Fano $G$-manifold which admits no K\"ahle-Ricci soliton.  Then any solution of K\"ahler-Ricci flow (\ref{kahler-Ricci-flow}) on $M$ with any
 initial metric $\omega_0\in 2\pi c_1(M)$ is of type II.
\end{theo}

\end{document}